\title[WS transform on kinetic models]{The Watanabe-Strogatz transform and constant of motion functionals for kinetic vector models}
\author[Park]{Hansol Park}
\address[Hansol Park]{\newline Department of Mathematical Sciences\newline Seoul National University, Seoul 08826, Republic of Korea}
\email{hansol960612@snu.ac.kr}
\newtheorem{theorem}{Theorem}[section]
\newtheorem{lemma}{Lemma}[section]
\newtheorem{corollary}{Corollary}[section]
\newtheorem{proposition}{Proposition}[section]
\newtheorem{remark}{Remark}[section]
\newtheorem{example}{Example}[section]
\newtheorem{definition}{Definition}[section]
\newcommand{\bbr}{\mathbb R}
\newcommand{\bbs}{\mathbb S}
\newcommand{\bbt} {\mathbb T}
\newcommand{\bbc}{\mathbb C}
\begin{document}

\date{\today}

\subjclass[2020]{70G60, 34D06, 70F10} \keywords{Watanabe-Strogatz transform, cross ratio, push-forward solution, constant of motion}

\thanks{\textbf{Acknowledgment.} The work of H. Park is supported by NRF-2020R1A2C3A01003881(National Research Foundation of Korea).}

\begin{abstract}
We present a kinetic version of the Watanabe-Strogatz(WS) transform for vector models in this paper. From the generalized WS-transform, we obtain the cross-ratio type constant of motion functionals for kinetic vector models under suitable conditions. We present the sufficient and necessary conditions for the existence of the suggested constant of motion functional. As an application of the constant of motion functional, we provide the instability of bipolar states of the kinetic swarm sphere model. We also provide the WS-transform and constant of motion functional for non-identical kinetic vector models. 
\end{abstract}

\maketitle \centerline{\date}


\section{Introduction}
\setcounter{equation}{0}

Collective behaviors of synchronous systems are ubiquitous in nature. e.g., schooling of fish \cite{Aoki}, flashing of fireflies \cite{BB}, swarm of bacteria, flocks of birds, cooperative robot system \cite{DB, DF}, etc. From two pioneers Kuramoto \cite{Kura-1, Kura-2} and Winfree \cite{Winf-1, Winf-2}, studies on the synchronous system has been researched widely \cite{De, HK, Zhu}. One of the important concepts for the system is a constant of motion(conserved quantity). From a constant of motion, we can reduce the system. In \cite{HKPR, Lo, Lo-1, Wa-St-1, Wa-St-2}, constant of motion and transform of the Kuramoto model, the swarm sphere model, and matrix Riccati equations were introduced. So far, the WS-transform have been applied to ODE systems. The goal of this paper is to find a kinetic version of the WS-transform.\\

In this paper, we study the following kinetic system defined on the unit sphere $\bbs^d$ given as
\begin{align}\label{A-1}
\begin{cases}
\partial_t\rho_t+\nabla\cdot(\rho_t v_t)=0,\quad \rho_t\in\mathcal{P}(\bbs^d),\quad t>0,\\
\displaystyle v_t(x)=\Omega x+X(t)-\langle x, X(t)\rangle x,\quad \forall~x\in\bbs^d\subset\bbr^{d+1},
\end{cases}
\end{align}
where $\Omega$ is a skew-symmetric matrix with size $(d+1)\times (d+1)$ and $\mathcal{P}(\bbs^d)$ is a set of continuous probability density functions on $\bbs^d$. Since we denote $\rho_t(x):=\rho(t, x)$, we do not mentioned the initial data $\rho_0$. This system \eqref{A-1} is a kinetic version of the following ODE system defined on the unit sphere $\bbs^d$:
\begin{align}\label{A-2-0}
\begin{cases}
\displaystyle\frac{d}{dt}x_i=\Omega x_i+X-\langle x_i, X\rangle x_i,\quad t>0,\\
x_i(0)=x_i^0\in\bbs^d,\quad \forall~i\in\mathcal{N}:=\{1, 2, \cdots, N\}.
\end{cases}
\end{align}
System \eqref{A-2-0} is considered in \cite{Lo} as a generalized vector version of Kuramoto type models. This system is a general form of the swarm sphere model(a.k.a. Lohe sphere model) \cite{JC, Lo-3, Ta, TM}, the swarm sphere model with frustration \cite{HKPR} and the Winfree sphere model \cite{Park}(See Example \ref{E2.1} for details).\\

In this paper, we provide affirmative answers to the following analytical questions.\\
\begin{itemize}
\item (Q1): is there any Watanabe-Strogatz type transform for a kinetic system \eqref{A-1}?\\

\item (Q2): if so, is there any constant of motion functional for a kinetic system \eqref{A-1}?\\
\end{itemize}

The main results of this paper are four-fold. First, we find a kinetic version of the WS-transform on vector models. Using the push-forward of a measure, we could obtain a canonical form of the kinetic version of the WS-transform. For the kinetic system \eqref{A-1}, we define a time-dependent map $M_t: \bbs^d\to\bbs^d$ which satisfies
\begin{align}\label{A-2}
\begin{cases}
\displaystyle\frac{d}{dt}M_t(x)=\Omega x+X(t)-\langle x, X(t)\rangle x,\quad t>0,\vspace{0.2cm}\\
M_0(x)=x,\quad \forall ~x\in\bbs^d.
\end{cases}
\end{align}
Then, we know that $\rho_t=M_t\# \rho_0$ is a solution of system \eqref{A-1}(See Theorem \ref{Thm3.1}). This implies that if $X(t)$ is known for all $t\in[0, \infty)$ then a PDE system \eqref{A-1} can be reduced into an ODE system \eqref{A-2}. Since $X$ is just a time-dependent function, $X(t)$ $(t\geq0)$ can depend on previous solution data $\{\rho_s:s\in[0, t)\}$. In this case, we can not reduce the PDE system into the ODE system(See Section \ref{sec:3.1}).

Second, we use a kinetic version of the WS-transform to study constants of motion for a kinetic system \eqref{A-1}. Recall the particle version of system \eqref{A-1} is given as an ODE system \eqref{A-2-0} and its constants of motion are
\[
\frac{\|x_{i_1}-x_{i_2}\|^2\cdot \|x_{i_3}-x_{i_4}\|^2}{\|x_{i_2}-x_{i_3}\|^2\cdot\|x_{i_4}-x_{i_1}\|^2},\quad\prod_{\ell=1}^k\frac{\|x_{i_{2\ell-1}}-x_{i_{2\ell}}\|^2}{\|x_{i_{2\ell}}-x_{i_{2\ell+1}}\|^2},
\]
where $k\geq2$ and $i_{2k+1}=i_1$. From formal calculations, we can obtain the following candidates of constant of motion functionals(See Section \ref{sec:3.2.1}):
\begin{align}
\begin{aligned}\label{A-4}
&\mathcal{H}_p[\rho]:=\int_{(\bbs^d)^4}\left(\frac{\|x_1-x_2\|^2\cdot \|x_3-x_4\|^2}{\|x_2-x_3\|^2\cdot\|x_4-x_1\|^2}\right)^p\rho(x_1)\rho(x_2)\rho(x_3)\rho(x_4)d\sigma_{x_1}d\sigma_{x_2}d\sigma_{x_3}d\sigma_{x_4},\\
&\mathcal{H}_{p, k}[\rho]:= \int_{(\bbs^d)^{2k}}\left(\prod_{\ell=1}^k\frac{\|x_{2\ell-1}-x_{2\ell}\|^2}{\|x_{2\ell}-x_{2\ell+1}\|^2}\right)^p\rho(x_1)\rho(x_2)\cdots \rho(x_{2k})d\sigma_{x_1}d\sigma_{x_2}\cdots d\sigma_{x_{2k}}.
\end{aligned}
\end{align}
Here, $d\sigma_{x_i}$ is a surface measure on $\bbs^d$ with respect to a variable $x_i$, and in this paper we use $d\sigma_{x_i}=d\sigma_i$ if it does not make confusion. We show that functionals $\mathcal{H}_p$ and $\mathcal{H}_{p, k}$ are constants of motion for system \eqref{A-1} from the property of $M_t$ defined in \eqref{A-2}(See Section \ref{sec:3.2.2}). We also provide the sufficient and necessary conditions for existence of $\mathcal{H}_p$ and $\mathcal{H}_{p, k}$ as follows:
\[
p\in\left(-\frac{d}{2},\frac{d}{2}\right),
\]
where $d$ is the dimension of the domain $\bbs^d$.

Third, we use a constant of motion functional $\mathcal{H}_p$ to show the instability of bipolar states for the kinetic swarm sphere model \eqref{E-1}. Begin with the following observation:
\[
y_1\neq y_2\in\bbs^d,\quad 0<q<1\quad\Longrightarrow\quad\mathcal{H}_p[q\delta_{y_1}+(1-q)\delta_{y_2}]=\infty,
\]
and
\[
\rho_0\in\mathcal{P}(\bbs^d)\quad\Rightarrow\quad\mathcal{H}_p[\rho_0]<\infty,
\]
we guess that bipolar states can not be emerged in the kinetic swarm sphere model. We formulate this idea in Lemma \ref{Lem5.1} and Theorem \ref{Thm5.1}, and we obtain the complete synchronization of the kinetic swarm sphere model in the different view point studied in \cite{F-L}. On the other hand, we know that for any real number $\varepsilon>0$ there exists a continuous probability density function $\rho\in\mathcal{P}(\bbs^d)$ satisfies
\[
W_2(\rho, q\delta_{y_1}+(1-q)\delta_{y_2})<\varepsilon,
\]
where $W_2$ is the Wasserstein-2 metric. However, we have
\[
\mathcal{H}_p[\rho]<\infty,\quad \mathcal{H}_p[q\delta_{y_1}+(1-q)\delta_{y_2}]=\infty.
\]
Since $\mathcal{H}_p$ is a constant of motion functional, we can conclude that the state $q\delta_{y_1}+(1-q)\delta_{y_2}$ is unstable in some sense(See Section \ref{sec:5}).\\

Forth, we consider non-identical particle/kinetic vector models. Originally, the WS-transform have been only applied to identical systems \eqref{A-2-0}. i.e. all free-flow terms involved with $\Omega$ are identical. We apply the WS-transform to non-identical particle/kinetic sphere models, and we obtain their constant of motion functionals.(See section \ref{sec:6}).\\

The rest of this paper is organized as follows. In Section \ref{sec:2}, we introduce the previous result of the WS-transform and constant of motion functionals for the Kuramoto model and vector models. In Section \ref{sec:3}, we construct a kinetic version of the WS-transform and constant of motion functionals for vector models . In Section \ref{sec:4}, we provide the sufficient and necessary conditions for the existence of functionals $\mathcal{H}_p$ and $\mathcal{H}_{p, k}$. In Section \ref{sec:5}, we study the instability of bipolar states of the kinetic swarm sphere model as an application of constant of motion functionals. In Section \ref{sec:6}, we provide the WS-transform and constant of motion functionals for the non-identical systems. Finally, Section \ref{sec:7} is devoted to a summary of the paper.\\


\section{Preliminaries}\label{sec:2}
\setcounter{equation}{0}

In this section, we provide previous results on Watanabe-Strogatz(WS) transform and its generalization to the vector model defined on the unit sphere $\bbs^d$. Since the purpose of this paper is constructing a kinetic version of the WS-transform, now we briefly review the original WS-transform and its generalization to vector models.

\subsection{The WS-transform}\label{sec:2.1}
In this subsection, we introduce the original WS-transform defined on Kuramoto type models. This transform has been introduced in \cite{Wa-St-1, Wa-St-2} by Watanabe and Strogatz. The Watanabe-Strogatz(WS) transform reduce $N$ equations of the Kuramoto model into three real equations.  Let a Kuramoto type equation be given as the following form:
\begin{align}\label{B-(1)}
\begin{cases}
\displaystyle\dot{\theta}_i=\omega-\frac{\mathrm{i}}{2}\left(H(t)e^{-\mathrm{i}\theta_i}-\bar{H}(t)e^{\mathrm{i}\theta_i}\right),\quad t>0,\\
\theta_i(0)=\theta_i^0,\quad\forall~i\in\mathcal{N},
\end{cases}
\end{align}
where $\omega\in\bbr$ is a natural frequency of the system and a complex function $H$ depends on $t$. If we substitute $H(t)=\frac{\kappa}{N}\sum_{k=1}^N e^{\mathrm{i}\theta_k}$ into system \eqref{B-(1)}, then we can obtain the identical Kuramoto model:
\begin{align}\label{B-2-0}
\begin{cases}
\displaystyle\dot{\theta}_i=\omega+\frac{\kappa}{N}\sum_{k=1}^N\sin(\theta_k-\theta_i)\quad t>0,\\
\theta_i(0)=\theta_i^0,\quad\forall~i\in\mathcal{N}.
\end{cases}
\end{align}
So we know that system \eqref{B-(1)} is a general type of the Kuramoto model \eqref{B-2-0}. Now, we assume that $e^{\mathrm{i}\theta_j}$ can be expressed as the following form:
\begin{align}\label{B-(2)}
e^{\mathrm{i}\theta_j(t)}=\frac{z(t)+e^{\mathrm{i}\xi_j(t)}}{1+\bar{z}(t)e^{\mathrm{i}\xi_j(t)}},\quad \xi_j(t)=\xi_j^0+\alpha(t),
\end{align}
where time-dependent complex numbers $z$ and $\alpha$ are independent of index $j\in\mathcal{N}$. For constants $\{\xi_j^0\}_{j=1}^N$, we can express $\{\theta_j(t)\}_{j=1}^N$ as functions of $z(t)$ and $\alpha(t)$. i.e. there exists a correspondence 
\[
\{\theta_j(t)\}_{j=1}^N\quad \longleftrightarrow\quad (z(t), \alpha(t), \{\xi_j^0\}_{j=1}^N).
\]
If we impose the dynamics of $z$ and $\alpha$ as follows:
\[
\begin{cases}
\displaystyle\dot{z}=\mathrm{i}\omega z+\frac{H}{2}-z^2\frac{\bar{H}}{2},\\
\displaystyle\dot{\alpha}=\omega+\frac{\mathrm{i}}{2}(z\bar{H}-\bar{z}H),\quad t>0,
\end{cases}
\]
and $\{\theta_j\}_{j=1}^N$ defined as \eqref{B-(2)}, then  $\{\theta_j(t)\}_{j=1}^N$ is a solution of system \eqref{B-(1)}. Now, we define an operator $M_t:\bbc\to\bbc$ as follows:
\[
M_t(w)=\frac{z(t)+we^{\mathrm{i}\alpha(t)}}{1+\bar{z}(t)we^{\mathrm{i}\alpha(t)}}.
\]
By relation \eqref{B-(2)}, we have
\[
e^{\mathrm{i}\theta_j}=M_t(e^{\mathrm{i}\xi_j^0}).
\]
We define a cross ratio functional $\lambda$ as follows:
\[
\lambda(\alpha_1,\alpha_2,\alpha_3,\alpha_4)=\frac{(\alpha_1-\alpha_2)(\alpha_3-\alpha_4)}{(\alpha_2-\alpha_3)(\alpha_4-\alpha_1)},
\]
where $\alpha_1, \alpha_2, \alpha_3, \alpha_4$ are unit complex numbers. i.e. $\alpha_l\in\bbc$ and $|\alpha_l|=1$ for $l\in\{1, 2, 3, 4\}$. From simple calculations, we know that a transformation $M_t$ preserves a functional $\lambda$. i.e. for any $t\geq0$, we have
\[
\lambda(\alpha_1,\alpha_2,\alpha_3,\alpha_4)=\lambda(M_t(\alpha_1), M_t(\alpha_2), M_t(\alpha_3), M_t(\alpha_4)).
\]
This implies that if $i, j, k, l$ are distinct indices, then 
\[
\lambda(e^{\mathrm{i}\theta_i(t)}, e^{\mathrm{i}\theta_j(t)}, e^{\mathrm{i}\theta_k(t)}, e^{\mathrm{i}\theta_l(t)})=\lambda(e^{\mathrm{i}\xi_i^0}, e^{\mathrm{i}\xi_j
^0}, e^{\mathrm{i}\xi_k^0}, e^{\mathrm{i}\xi_l^0})=\lambda(e^{\mathrm{i}\theta_i^0}, e^{\mathrm{i}\theta_j^0}, e^{\mathrm{i}\theta_k^0}, e^{\mathrm{i}\theta_l^0})
\]
for all $t\geq0$. If we define the following functional:
\[
\lambda_{ijkl}(\Theta)=\lambda(e^{\mathrm{i}\theta_i}, e^{\mathrm{i}\theta_j}, e^{\mathrm{i}\theta_k}, e^{\mathrm{i}\theta_l})
\]
where $\Theta=\{\theta_j\}_{j=1}^N$, then we have
\[
\lambda_{ijkl}(\Theta(t))=\lambda_{ijkl}(\Theta^0)
\]
and this means $\lambda_{ijkl}$ is a constant of motion functional for system \eqref{B-(1)}.

\subsection{A Higher-dimensional WS-transform}\label{sec:2.2}
In this subsection, we introduce a generalization of the WS-transform on vector models. This generalized transform was introduced in \cite{Lo} by Lohe. Let a particle model be given as follows:
\begin{align}\label{B-(3)}
\begin{cases}
\dot{x}_i=\Omega x_i+X-\langle x_i, X\rangle x_i,\quad t>0,\\
x_i(0)=x_i^0,\quad \forall~i\in\mathcal{N}.
\end{cases}
\end{align}
where $X(t)\in \bbr^{d+1}$ for all $t\geq0$. We have the following simple lemma on the collisionless of system \eqref{B-(3)}.

\begin{lemma}\label{Lem2.1}
Suppose that the initial data $\{x_j^0\}_{j=1}^N$ satisfy $x_k^0\neq x_l^0$ for some $k, l\in\mathcal{N}$ and let $\mathcal{X}=\{x_j\}_{j=1}^N$ be a solution of system \eqref{B-(3)}. Then we have
\[
x_k(t)\neq x_l(t)\quad\forall t\geq0.
\]
\end{lemma}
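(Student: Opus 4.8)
The plan is to track the evolution of the inner product $\langle x_k(t), x_l(t)\rangle$ (equivalently, the squared distance $\|x_k(t)-x_l(t)\|^2$, since both points lie on $\bbs^d$) and to show that it satisfies a scalar linear ODE whose solutions cannot reach the collision value once they start away from it. This reduces a statement about two coupled $(d+1)$-dimensional trajectories to an elementary one-dimensional estimate.

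First I would differentiate $s(t) := \langle x_k(t), x_l(t)\rangle$ along the flow \eqref{B-(3)}:
\[
\frac{d}{dt}\, s = \langle \dot{x}_k, x_l\rangle + \langle x_k, \dot{x}_l\rangle.
\]
Substituting the right-hand side of \eqref{B-(3)} and using $\|x_k\|=\|x_l\|=1$, the two free-flow contributions combine into $\langle \Omega x_k, x_l\rangle + \langle x_k, \Omega x_l\rangle$, which vanishes because $\Omega$ is skew-symmetric (so $\langle x_k,\Omega x_l\rangle = -\langle \Omega x_k, x_l\rangle$). The remaining coupling terms collapse to
\[
\frac{d}{dt}\, s = \big(\langle X, x_k\rangle + \langle X, x_l\rangle\big)(1 - s).
\]

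Writing $A(t) := \langle X(t), x_k(t)\rangle + \langle X(t), x_l(t)\rangle$ and $u(t) := 1 - s(t) = \tfrac12\|x_k(t)-x_l(t)\|^2$, this becomes the homogeneous linear equation $\dot{u} = -A(t)\,u$, whose integrating-factor solution is
\[
u(t) = u(0)\,\exp\!\left(-\int_0^t A(\tau)\,d\tau\right).
\]
Since the hypothesis $x_k^0 \neq x_l^0$ gives $u(0) > 0$, and the exponential factor is strictly positive, I conclude $u(t) > 0$, hence $x_k(t) \neq x_l(t)$, for every $t \geq 0$.

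The computation is short, and I do not expect any genuine obstacle: the only point requiring care is that $A(t)$ be well defined and integrable on each finite interval, which follows from the assumed existence of the solution $\mathcal{X}$ together with continuity of $X(t)$ and the boundedness of the trajectories on $\bbs^d$. The key structural observation—that the skew-symmetric free-flow term drops out and leaves a \emph{decoupled} scalar linear ODE for the pairwise distance—is precisely what makes the collisionless property immediate, and this same mechanism is what will later underlie the constant-of-motion functionals $\mathcal{H}_p$ and $\mathcal{H}_{p,k}$.
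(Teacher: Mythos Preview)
Your proof is correct and essentially identical to the paper's: both derive the scalar linear ODE $\dot{u}=-\langle X,x_k+x_l\rangle\,u$ for $u=\tfrac12\|x_k-x_l\|^2$ and integrate it to conclude that $u(t)>0$ whenever $u(0)>0$. The only cosmetic difference is that the paper differentiates $\|x_k-x_l\|^2$ directly via $\langle\dot{x}_k-\dot{x}_l,x_k-x_l\rangle$, whereas you pass through $s=\langle x_k,x_l\rangle$ first; since $u=1-s$ on the sphere, these are the same computation.
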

\begin{proof}
From direct calculations, we have
\begin{align*}
\frac{1}{2}\frac{d}{dt}\|x_k-x_l\|^2&=\langle \dot{x}_k-\dot{x}_l, x_k-x_l\rangle
=\big\langle\langle X, x_l\rangle x_l-\langle X, x_k\rangle x_k, x_k-x_l\big\rangle\\
&=-(1-\langle x_k, x_l\rangle)\cdot \langle X, x_k+x_l\rangle=-\frac{1}{2}\|x_k-x_l\|^2\cdot\langle X, x_k+x_l\rangle.
\end{align*}
This yields,
\[
\|x_k(t)-x_l(t)\|=\|x_k^0-x_l^0\|\exp\left(-\frac{1}{2}\int_0^\top \langle X(s), x_l(s)+x_k(s)\rangle ds\right).
\]
By the assumption $x_k^0\neq x_l^0$, we have $x_k(t)\neq x_l(t)$ for all $t\geq0$.
\end{proof}

Throughout this paper, we will consider that the initial data $\{x_j^0\}$ satisfy $x_k^0\neq x_l^0$ for all disjoint indices $k\neq l\in\mathcal{N}$. From Lemma \ref{Lem2.1}, automatically we have $x_k(t)\neq x_l(t)$ for all $k\neq l\in\mathcal{N}$ and $t\geq0$. Now, we introduce some examples of system \eqref{B-(3)}.

\begin{example}\label{E2.1}
We can substitute various $X$ into system \eqref{B-(3)}.

\noindent(1) $X=\frac{\kappa}{N}\sum_{k=1}^Nx_k$. We get the swarm sphere model as follows:
\begin{align}\label{B-(3)-1}
\begin{cases}
\displaystyle\dot{x}_i=\Omega x_i+\frac{\kappa}{N}\sum_{k=1}^N(x_k-\langle x_i, x_k\rangle x_i),\quad t>0,\\
x_i(0)=x_i^0\in\bbs^d,\quad \forall~i\in\mathcal{N}.
\end{cases}
\end{align}
\vspace{0.5cm}
\noindent(2) $X=\frac{\kappa}{N}\sum_{k=1}^NVx_k$. We get the swarm sphere model with a frustration matrix $V\in\bbr^{(d+1)\times(d+1)}$ as follows:
\[
\begin{cases}
\displaystyle\dot{x}_i=\Omega x_i+\frac{\kappa}{N}\sum_{k=1}^N(Vx_k-\langle x_i, Vx_k\rangle x_i),\quad t>0,\\
x_i(0)=x_i^0\in\bbs^d,\quad \forall~i\in\mathcal{N}.
\end{cases}
\]
This system was studied in \cite{HKPR}.\vspace{0.5cm}

\noindent(3) $X=\frac{\kappa}{N}\sum_{k=1}^NI(x_k)e$. We get the identical Winfree-sphere model as follows:
\[
\begin{cases}
\displaystyle\dot{x}_i=\Omega x_i+\frac{\kappa}{N}(e-\langle x_i, e\rangle x_i)\sum_{k=1}^NI(x_k),\quad t>0,\\
x_i(0)=x_i^0\in\bbs^d,\quad \forall~i\in\mathcal{N},
\end{cases}
\]
where $I$ is the influence function of the system and $e=[\underbrace{0, \cdots, 0}_{\text{$d$-times}}, 1]^\top$ is the north-pole of the unit sphere $\bbs^d$ embedded in $\bbr^{d+1}$. This system was studied in \cite{Park}.\vspace{0.5cm}

\noindent(4) $X(t)=\frac{\kappa}{N}\sum_{k=1}^Nx_k(t-\tau)$. We get the swarm sphere model with a time-delay $\tau>0$ as follows
\begin{align*}
\begin{cases}
\displaystyle\dot{x}_i(t)=\Omega x_i(t)+\frac{\kappa}{N}\sum_{k=1}^N(x_k(t-\tau)-\langle x_i(t), x_k(t-\tau)\rangle x_i(t)),\quad t>0,\\
x_i(s)=\varphi_i^0(s)\in\bbs^d,\quad \forall~-\tau\leq s\leq 0,\quad i\in\mathcal{N}.
\end{cases}
\end{align*}
This system was studied in \cite{DHK}.
\end{example}

Now, we consider the following generalization of the WS-transform:
\begin{align}\label{B-3-2}
x_i=w+\frac{(u_i+w)(1-\|w\|^2)}{\|u_i+w\|^2},\quad u_i=-w+\frac{(x_i-w)(1-\|w\|^2)}{\|x_i-w\|^2}.
\end{align}
If we define $W_w:\bbs^{d}\to\bbs^{d}$ as follows:
\[
W_w(u)=v+\frac{(u+w)(1-\|w\|^2)}{\|u+w\|^2},
\]
then we can easily check that
\[
W_0=\mathrm{id}_{\bbs^d}\quad\text{and}\quad W_{-w}=W_w^{-1}\quad\forall~w\in\bbs^d.
\]
Using $W_w$, we can rewrite relation \eqref{B-3-2} as
\[
x_i=W_w(u_i),\quad u_i=W_w^{-1}(x_i).
\]
Now we assume that $(w(t), R(t))\in\bbs^d\times \mathrm{SO}(d+1)$ is a solution of the following system:
\begin{align}\label{B-(4)}
\begin{cases}
\displaystyle \dot{w}=\Omega w+\frac{1}{2}(1+\|w\|^2)X-\langle w, X\rangle w,\quad w(0)=0,\\
\dot{R}=\Omega R+Xw^\top R-wX^\top R,\quad 
R(0)=I_{d+1}.
\end{cases}
\end{align}
Then we know that
\[
x_i(t)=W_{w(t)}(R(t)x_i^0)
\]
is a solution of system \eqref{B-(3)}. If we define
\[
M_t(x)=W_{w(t)}(R(t)x),
\]
then this yields
\begin{align*}
M_t(x)=w(t)+\frac{(R(t)x+w(t))(1-\|w(t)\|^2)}{\|R(t)x+w(t)\|^2}.
\end{align*}
By the given initial condition, $w(0)=0$ and $R(0)=I_{d+1}$, implies
\[
M_0(x)=0+\frac{(x+0)(1-0^2)}{\|x+0\|^2}=x,\quad \forall x\in\bbs^{d},
\]
and this yields $M_0=\mathrm{Id}_{\bbs^d}$. Then the solution $\{x_i\}_{i=1}^N$ of system \eqref{B-(3)} can be expressed as
\[
x_i(t)=M_t(x_i^0)\quad t\geq0,\quad i\in\mathcal{N}.
\]
We know that if we define the cross ratio of four distinct vectors $u_1, u_2, u_3, u_4\in\bbs^d$ by 
\[
\mathcal{C}(u_1, u_2, u_3, u_4):=\frac{(1-u_1\cdot u_2)(1-u_3\cdot u_4)}{(1-u_2\cdot u_3)(1-u_4\cdot u_1)}=\frac{\|u_1-u_2\|^2\cdot \|u_3-u_4\|^2}{\|u_2-u_3\|^2\cdot\|u_4-u_1\|^2},
\]
then $M_t$ leaves $C$ invariant. i.e.
\[
\mathcal{C}(u_1, u_2, u_3, u_4)=\mathcal{C}(M_t(u_1), M_t(u_2), M_t(u_3), M_t(u_4))\quad\forall ~t\geq0.
\]
So we know that 
\[
\mathcal{C}_{ijkl}(\mathcal{X}(t)):=\mathcal{C}(x_i(t), x_j(t), x_k(t), x_l(t))=\mathcal{C}(x_i^0, x_j^0, x_k^0, x_l^0)=\mathcal{C}_{ijkl}(\mathcal{X}^0)\quad \forall~t\geq0
\]
is a constant of motion for system \eqref{B-(3)}, where $\mathcal{X}:=\{x_j\}_{j=1}^N$. For the well-definedness of the cross ratio $\mathcal{C}$, we define the following set of cycles with length $2k$ for $2\leq k\in\mathbb{N}$:
\[
C_k :=\{(i_1, i_2, \cdots, i_{2k}): i_1\neq i_{2k}~\text{and}~ i_n\neq i_{n+1}~\forall n=1, 2, \cdots, 2k-1\}.
\]
Based on the above argument, we get the following theorem.

\begin{theorem}[\cite{HKPR, Lo}]\label{Thm2.1}
Let $\mathcal{X}=\{x_j\}_{j=1}^N$ be a solution of system \eqref{B-(3)}.  For any $(i_1, i_2, i_3, i_4)\in C_2$, $\mathcal{C}_{i_1i_2i_3i_4}$ is a constant of motion functional for system \eqref{B-(3)}.
\end{theorem}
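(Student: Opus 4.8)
The plan is to prove the statement by a direct logarithmic-derivative computation, reusing the single identity already established in the proof of Lemma \ref{Lem2.1}. First I would record the well-definedness. The condition $(i_1,i_2,i_3,i_4)\in C_2$ forces the four index pairs $(i_1,i_2)$, $(i_2,i_3)$, $(i_3,i_4)$, $(i_4,i_1)$ to consist of distinct indices, so by Lemma \ref{Lem2.1} (applied to each such pair, using the standing assumption that $x_k^0\neq x_l^0$ for all $k\neq l$) every factor $\|x_{i_m}-x_{i_n}\|^2$ remains strictly positive for all $t\geq0$. Hence $\mathcal{C}_{i_1i_2i_3i_4}(\mathcal{X}(t))$ is a well-defined positive quantity and we may freely differentiate its logarithm.

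The core computation is as follows. From the proof of Lemma \ref{Lem2.1} we have, for any pair of distinct indices $k\neq l$,
\[
\frac{d}{dt}\log\|x_k-x_l\|^2=-\langle X, x_k+x_l\rangle.
\]
Writing the logarithm of the cross ratio as
\[
\log\mathcal{C}_{i_1i_2i_3i_4}=\log\|x_{i_1}-x_{i_2}\|^2+\log\|x_{i_3}-x_{i_4}\|^2-\log\|x_{i_2}-x_{i_3}\|^2-\log\|x_{i_4}-x_{i_1}\|^2,
\]
I would differentiate term by term and substitute the identity above, so that every summand collapses to a single inner product and the whole derivative becomes $\langle X,\,\cdot\,\rangle$ against the signed sum
\[
-(x_{i_1}+x_{i_2})-(x_{i_3}+x_{i_4})+(x_{i_2}+x_{i_3})+(x_{i_4}+x_{i_1}).
\]

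The key observation, and the only thing that genuinely needs checking, is that this signed sum cancels term by term: regrouping gives $-x_{i_1}-x_{i_2}-x_{i_3}-x_{i_4}+x_{i_2}+x_{i_3}+x_{i_4}+x_{i_1}=0$. Conceptually this is the cyclic cancellation encoded in the cross ratio: each index occurs once in a numerator factor and once in a denominator factor, so its contribution to the derivative enters with opposite signs. It follows that $\frac{d}{dt}\log\mathcal{C}_{i_1i_2i_3i_4}=0$, hence $\mathcal{C}_{i_1i_2i_3i_4}(\mathcal{X}(t))=\mathcal{C}_{i_1i_2i_3i_4}(\mathcal{X}^0)$ for all $t\geq0$, which is the assertion.

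I do not anticipate any real obstacle; the entire content is the vanishing of these four boundary terms, which is exactly why the cross ratio is conserved whereas the individual squared distances are not (each distance instead obeys the exponential growth law displayed in Lemma \ref{Lem2.1}). I note that the same conclusion also follows conceptually from the WS-transform structure already set up in this section: since $x_i(t)=M_t(x_i^0)$ and $M_t$ leaves $\mathcal{C}$ invariant, the four arguments are carried together by a single cross-ratio-preserving map, so $\mathcal{C}_{i_1i_2i_3i_4}$ is automatically frozen. The direct computation above is, however, self-contained and does not require separately establishing the invariance of $M_t$.
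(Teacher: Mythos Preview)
Your proof is correct, and it takes a genuinely different route from the paper. The paper does not give a separate proof environment for this theorem; instead, the surrounding text in Section~\ref{sec:2.2} establishes the WS-transform $M_t$ (built from the solutions $w$ and $R$ of system \eqref{B-(4)}), asserts that $M_t$ preserves the cross ratio $\mathcal{C}$, and then concludes from $x_i(t)=M_t(x_i^0)$ that $\mathcal{C}_{i_1i_2i_3i_4}(\mathcal{X}(t))=\mathcal{C}_{i_1i_2i_3i_4}(\mathcal{X}^0)$. You do the opposite: you bypass the transform entirely and compute $\frac{d}{dt}\log\mathcal{C}_{i_1i_2i_3i_4}$ directly from the single identity in the proof of Lemma~\ref{Lem2.1}, after which the cyclic cancellation is immediate. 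Your argument is more elementary and fully self-contained (it needs no auxiliary ODE for $w$ or $R$ and no separate verification that $M_t$ preserves cross ratios), while the paper's argument is more structural: it exhibits the cross-ratio invariance as a consequence of the fact that the whole flow is a single M\"obius-type map of the sphere, which is precisely the viewpoint that later drives the kinetic results in Sections~\ref{sec:3} and~\ref{sec:6}. You already note this alternative at the end of your proposal, so you have both pictures in hand.
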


Now, we introduce induced constant of motion functionals from Theorem \eqref{Thm2.1}. For any $\Lambda=(i_1, i_2, \cdots, i_{2k})\in C_k$, we define $\mathcal{C}_\Lambda$ as follows:
\[
\mathcal{C}_\Lambda(\mathcal{X}):=\prod_{\ell=1}^k\frac{\|x_{i_{2\ell-1}}-x_{i_{2\ell}}\|^2}{\|x_{i_{2\ell}}-x_{i_{2\ell+1}}\|^2},
\]
where we denote $i_{2k+1}=i_1$. We can prove that $\mathcal{C}_\Lambda$ is also a constant of motion.
\begin{corollary}\label{Cor2.1}
Let $\mathcal{X}=\{x_j\}$ be a solution of \eqref{B-(3)}.  For any $\Lambda=(i_1, i_2, \cdots, i_{2k})\in C_k$,  $\mathcal{C}_\Lambda$ is a constant of motion. i.e.
\[
\frac{d}{dt}\mathcal{C}_\Lambda(\mathcal{X}(t))=0.
\]
\end{corollary}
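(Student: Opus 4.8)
The plan is to reduce the claim to a logarithmic-derivative computation that makes the product telescope, rather than re-running the cross-ratio argument of Theorem~\ref{Thm2.1} factor by factor. The starting point is the identity already extracted in the proof of Lemma~\ref{Lem2.1}: for any two indices $k\neq l$,
\[
\frac{d}{dt}\|x_k-x_l\|^2=-\|x_k-x_l\|^2\langle X, x_k+x_l\rangle .
\]
Since the cycle condition $(i_1,\dots,i_{2k})\in C_k$ forces all the distances appearing in $\mathcal{C}_\Lambda$ to stay strictly positive (here I invoke Lemma~\ref{Lem2.1} to guarantee $x_a(t)\neq x_b(t)$ whenever $a\neq b$), this can be rewritten in the clean form $\frac{d}{dt}\log\|x_k-x_l\|^2=-\langle X, x_k+x_l\rangle$.

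First I would take the logarithm of $\mathcal{C}_\Lambda$, turning the product into the sum $\sum_{\ell=1}^k\big(\log\|x_{i_{2\ell-1}}-x_{i_{2\ell}}\|^2-\log\|x_{i_{2\ell}}-x_{i_{2\ell+1}}\|^2\big)$. Differentiating term by term with the logarithmic identity above, the two contributions attached to each fixed $\ell$ share the summand $\langle X, x_{i_{2\ell}}\rangle$ with opposite signs, so the even-position vectors drop out and each $\ell$ contributes exactly $\langle X, x_{i_{2\ell+1}}-x_{i_{2\ell-1}}\rangle$. Pulling $X$ out of the sum leaves
\[
\frac{d}{dt}\log\mathcal{C}_\Lambda(\mathcal{X}(t))=\Big\langle X,\ \sum_{\ell=1}^k\big(x_{i_{2\ell+1}}-x_{i_{2\ell-1}}\big)\Big\rangle .
\]

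The heart of the argument is then purely combinatorial: the inner sum runs over the odd-position labels only and telescopes. Its positive part is indexed by $i_3,i_5,\dots,i_{2k+1}$ and its negative part by $i_1,i_3,\dots,i_{2k-1}$, and because of the cyclic convention $i_{2k+1}=i_1$ these two multisets coincide, so the sum is the zero vector. Hence $\frac{d}{dt}\log\mathcal{C}_\Lambda(\mathcal{X}(t))=0$, and since $\mathcal{C}_\Lambda$ is finite and positive along the flow this gives $\frac{d}{dt}\mathcal{C}_\Lambda(\mathcal{X}(t))=0$, as claimed.

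I expect the only genuine care to lie in the well-definedness bookkeeping rather than in any estimate: one must verify that every factor stays strictly positive and finite so that the logarithm and its derivative are legitimate, which is exactly what the cycle condition $(i_1,\dots,i_{2k})\in C_k$ (adjacent labels distinct and $i_1\neq i_{2k}$) together with Lemma~\ref{Lem2.1} provides. As an alternative route one could instead factor $\mathcal{C}_\Lambda=\prod_{\ell=1}^k\mathcal{C}(x_{i_{2\ell-1}},x_{i_{2\ell}},x_{i_{2\ell+1}},x_p)$ for an auxiliary label $p$ and invoke Theorem~\ref{Thm2.1} on each factor; but this forces one to exhibit an index $p$ distinct from all odd-position labels, an unnecessary constraint, so the telescoping log-derivative is the cleaner path.
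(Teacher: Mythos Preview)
Your proof is correct and takes a genuinely different route from the paper. The paper proves the corollary by exhibiting the algebraic factorisation
\[
\mathcal{C}_\Lambda(\mathcal{X})=\prod_{\ell=1}^k\mathcal{C}_{i_1i_{2\ell}i_{2\ell+1}i_{2\ell+2}}(\mathcal{X}),
\]
using $i_1$ as a fixed pivot, and then invoking Theorem~\ref{Thm2.1} on each factor. Your argument instead bypasses Theorem~\ref{Thm2.1} entirely: you feed the differential identity $\frac{d}{dt}\log\|x_k-x_l\|^2=-\langle X,x_k+x_l\rangle$ from Lemma~\ref{Lem2.1} directly into $\log\mathcal{C}_\Lambda$ and observe that the resulting sum telescopes to zero under the cyclic convention $i_{2k+1}=i_1$. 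This is more elementary (it needs only Lemma~\ref{Lem2.1}, not the full WS-transform machinery behind Theorem~\ref{Thm2.1}) and, as you noticed in your closing remark, it also sidesteps a bookkeeping nuisance in the factorisation approach: the paper's pivot $i_1$ must be distinct from every even-position label $i_4,i_6,\dots,i_{2k}$ for each $\mathcal{C}_{i_1i_{2\ell}i_{2\ell+1}i_{2\ell+2}}$ to lie in $C_2$, which the bare hypothesis $\Lambda\in C_k$ does not guarantee. Your telescoping argument needs only the adjacency conditions that $C_k$ actually provides.
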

\begin{proof}
Here, we set $i_1=i_{2k+1}$ and $i_2=i_{2k+2}$. By simple calculations, we have
\begin{align*}
\mathcal{C}_\Lambda(\mathcal{X}):&=\prod_{\ell=1}^k\frac{\|x_{i_{2\ell-1}}-x_{i_{2\ell}}\|^2}{\|x_{i_{2\ell}}-x_{i_{2\ell+1}}\|^2}\\
&=\prod_{\ell=1}^k\frac{\|x_{i_1}-x_{i_{2\ell}}\|^2\cdot\|x_{i_{2\ell+1}}-x_{i_{2\ell+2}}\|^2}{\|x_{i_{2\ell}}-x_{i_{2\ell+1}}\|^2\cdot\|x_{i_{2\ell+2}}-x_{i_1}\|^2}
=\prod_{\ell=1}^k\mathcal{C}_{i_1i_{2\ell}i_{2\ell+1}i_{2\ell+2}}.
\end{align*}
From Theorem \ref{Thm2.1}, we know that each $\mathcal{C}_{i_1i_{2\ell}i_{2\ell+1}i_{2\ell+2}}$ is a constant of motion for system \eqref{B-(3)} for all $1\leq\ell\leq k$. Since $\mathcal{C}_\Lambda(\mathcal{X})$ is a product of constants of motion, we know that it also a constant of motion for system \eqref{B-(3)}.
\end{proof}
So far, we introduced the WS-transform of vector models on the unit sphere($\bbs^d$). The WS-transform also can be applied to the double sphere model on $\bbs^{d_1}\times \bbs^{d_2}$(for details see \cite{Lo-2}).

\section{Kinetic version of the higher-dimensional WS-transform}\label{sec:3}
\setcounter{equation}{0}

In this section, we study a kinetic version of the WS-transform on vector models. We express a kinetic version of the WS-transform as a push-forward of the initial data $\rho_0\in\mathcal{P}(\bbs^d)$.

\subsection{Kinetic version of the WS-transform on vector models}\label{sec:3.1}
In this subsection, we introduce a kinetic version of system \eqref{B-(3)} and the generalized WS-transform. A kinetic version of system \eqref{B-(3)} can be written as
\begin{align}\label{C-(1)}
\begin{cases}
\partial_t\rho_t+\nabla\cdot(\rho_t v_t)=0,\quad t>0,\\
v_t(x)=\Omega x+X(t)-\langle x, X(t)\rangle x,\quad \forall~x\in\bbs^d,
\end{cases}
\end{align}
where $\rho_t$ is a probability density function on the unit sphere $\bbs^d$(i.e. $\rho_t\in\mathcal{P}(\bbs^d)$) and $\nabla$ is a covariant derivative on the unit sphere $\bbs^d$.  If we substitute
\begin{align}\label{C-(2)}
X(t)=\int y\rho_t(y)d\sigma_y,
\end{align}
into \eqref{C-(1)}, then we have the following kinetic swarm sphere model:
\begin{align}\label{C-(3)}
\begin{cases}
\partial_t\rho_t+\nabla\cdot(\rho_t v_t)=0\quad t>0,\\
\displaystyle v_t(x)=\Omega x+\int_{\bbs^d}(y-\langle x, y\rangle x)\rho_t(y)d\sigma_y\quad\forall~x\in\bbs^d.
\end{cases}
\end{align}
A special case of system \eqref{C-(1)} is the kinetic swarm sphere model \eqref{C-(3)} introduced in \cite{F-L, HKLN}.

Now we try to express a solution of system \eqref{C-(1)} as a push-forward of the initial data $\rho_0$. Let $w$ be a solution of the following system:
\begin{align}\label{C-3-1-0}
\begin{cases}
\displaystyle\dot{w}=\Omega w+\frac{1}{2}(1+\|w\|^2)X-\langle w, X\rangle w,\quad t>0,\\
w(0)=0,
\end{cases}
\end{align}
and we define a time-dependent map $M_t:\bbs^d\to\bbs^d$ as follows:
\begin{align}\label{C-3-1}
M_t(x)=w(t)+\frac{(u(t)+w(t))(1-\|w(t)\|^2)}{\|u(t)+w(t)\|^2}.
\end{align}
We want to construct the dynamics of $u:[0, \infty)\to\bbr^{d+1}$ which satisfies
\begin{align*}
\begin{cases}
\displaystyle\frac{d}{dt}M_t(x)=\Omega M_t(x)+X-\langle M_t(x), X\rangle M_t(x),\quad t>0,\\
M_0(x)=x,\quad \forall~x\in\bbs^d.
\end{cases}
\end{align*}
Since $M_t(x)=x$ and $w(0)=0$, we easily get
\begin{align}\label{C-3-1-1}
u(0)=x.
\end{align}
From \eqref{C-3-1}, we have
\begin{align*}
u(t)=-w(t)+\frac{(M_t(x)-w(t))(1-\|w(t)\|^2)}{\|M_t(x)-w(t)\|^2}.
\end{align*}
Then the temporal derivative of $u$ is
\begin{align}
\begin{aligned}\label{C-3-3}
\dot{u}&=-\dot{w}+\frac{(\frac{d}{dt}M_t(x)-\dot{w})(1-\|w\|^2)}{\|M_t(x)-w\|^2}-\frac{2\langle w,\dot{w}\rangle(M_t(x)-w)}{\|M_t(x)-w\|^2}\\
&\hspace{0.5cm}-2\left\langle \frac{d}{dt}M_t(x)-\dot{w}, M_t(x)-w \right\rangle\frac{(M_t(x)-w)(1-\|w\|^2)}{\|M_t(x)-w\|^4}.
\end{aligned}
\end{align}
Now, we introduce a lemma to simplify the calculation \eqref{C-3-3}.
\begin{lemma}\label{L3.1}
Let $M_t(x)$, $w$, and $u$ be functions defined above. Then we have the following calculations:
\begin{align*}
&(1)~\langle w, \dot{w}\rangle=\frac{1}{2}(1-\|w\|^2)\langle w, X\rangle,\\
&(2)~\frac{d}{dt}M_t(x)-\dot{w}=\Omega(M_t(x)-w)+\frac{1}{2}(1-\|w\|^2)X-\langle M_t(x), X\rangle M_t(x)+\langle w, X\rangle w,\\
&(3)~\left\langle \frac{d}{dt}M_t(x)-\dot{w}, M_t(x)-w \right\rangle=-\frac{1}{2}\|M_t(x)-w\|^2\langle M_t(x)+w, X\rangle.
\end{align*}
\end{lemma}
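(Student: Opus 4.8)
The plan is to verify all three identities by direct substitution of the governing ODEs, relying on two structural facts throughout: the skew-symmetry of $\Omega$, which gives $\langle v, \Omega v\rangle = 0$ for every $v\in\bbr^{d+1}$, and the fact that $M_t(x)$ remains on the sphere, so $\|M_t(x)\|^2 = 1$. There is no conceptual obstacle here; the entire difficulty is the arithmetic bookkeeping, which is heaviest in part (3).

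For (1), I would substitute the defining equation \eqref{C-3-1-0} for $\dot w$ into $\langle w, \dot w\rangle$. The term $\langle w, \Omega w\rangle$ vanishes by skew-symmetry, leaving $\frac{1}{2}(1+\|w\|^2)\langle w, X\rangle - \|w\|^2\langle w, X\rangle$; collecting the coefficient of $\langle w, X\rangle$ produces $\frac{1}{2}(1-\|w\|^2)$, as asserted. For (2), I would subtract \eqref{C-3-1-0} from the prescribed dynamics of $M_t(x)$. The linear parts merge into $\Omega(M_t(x)-w)$, the two multiples of $X$ combine through $1 - \frac{1}{2}(1+\|w\|^2) = \frac{1}{2}(1-\|w\|^2)$, and the two nonlinear terms carry over unchanged, yielding the stated formula at once.

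Part (3) demands the most care. The plan is to take the inner product of the identity from (2) against $M_t(x)-w$. The $\Omega$-contribution again drops out by skew-symmetry. I would then expand the remaining pairings using $\langle M_t(x), M_t(x)-w\rangle = 1 - \langle M_t(x), w\rangle$ (this is where $\|M_t(x)\|^2=1$ enters) and $\langle w, M_t(x)-w\rangle = \langle M_t(x), w\rangle - \|w\|^2$. Collecting the coefficients of $\langle M_t(x), X\rangle$ and of $\langle w, X\rangle$ separately, each reduces to $-\frac{1}{2}(1+\|w\|^2)$, while the two cross terms assemble into $\langle M_t(x), w\rangle\,\langle M_t(x)+w, X\rangle$. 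Factoring out $\langle M_t(x)+w, X\rangle$ leaves the scalar $-\frac{1}{2}(1+\|w\|^2) + \langle M_t(x), w\rangle$, which equals $-\frac{1}{2}\|M_t(x)-w\|^2$ once one expands $\|M_t(x)-w\|^2 = 1 - 2\langle M_t(x), w\rangle + \|w\|^2$. This is precisely the claimed identity, and the only place one can realistically go wrong is in tracking signs and the $\|w\|^2$ factors when collecting terms.
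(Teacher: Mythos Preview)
Your proposal is correct and follows essentially the same route as the paper: direct substitution of the ODE for $w$ in (1), subtraction of the two governing equations in (2), and in (3) pairing the result of (2) against $M_t(x)-w$, using skew-symmetry of $\Omega$ and $\|M_t(x)\|^2=1$ to collapse the resulting expression. The only cosmetic difference is that the paper keeps the full coefficients of $\langle M_t(x),X\rangle$ and $\langle w,X\rangle$ together and recognizes each as $-\tfrac{1}{2}\|M_t(x)-w\|^2$, whereas you split off the cross terms first and factor $\langle M_t(x)+w,X\rangle$ at the end; the arithmetic is identical.
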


\begin{proof}
(1) By direct calculations, we have
\begin{align*}
\langle w, \dot{w}\rangle&=\left\langle w,  \Omega w+\frac{1}{2}(1+\|w\|^2)X-\langle w, X\rangle w\right\rangle=\frac{1}{2}(1+\|w\|^2)\langle w, X\rangle-\langle w, X\rangle \|w\|^2\\
&=\frac{1}{2}(1-\|w\|^2)\langle w, X\rangle.
\end{align*}

(2) By simple arrangement, we get the desired result.\\

(3) Using the result of (2), we get
\begin{align*}
&\left\langle \frac{d}{dt}M_t(x)-\dot{w}, M_t(x)-w \right\rangle\\
=&\left\langle\Omega(M_t(x)-w)+\frac{1}{2}(1-\|w\|^2)X-\langle M_t(x), X\rangle M_t(x)+\langle w, X\rangle w, M_t(x)-w \right\rangle\\
=&\frac{1}{2}(1-\|w\|^2)\langle X, M_t(x)-w\rangle-\langle M_t(x), X\rangle\langle M_t(x), M_t(x)-w\rangle+\langle w, X\rangle\langle w, M_t(x)-w\rangle\\
=&\langle M_t(x), X\rangle\left(\frac{1}{2}(1-\|w\|^2)-1+\langle M_t(x), w\rangle\right)-\langle w, X\rangle\left(\frac{1}{2}(1-\|w\|^2)-\langle w, M_t(x)\rangle+\|w\|^2\right)\\
=&-\frac{1}{2}\langle M_t(x), X\rangle\|M_t(x)-w\|^2-\frac{1}{2}\langle w, X\rangle\|M_t(x)-w\|^2\\
=&-\frac{1}{2}\|M_t(x)-w\|^2\langle M_t(x)+w, X\rangle.
\end{align*}
\end{proof}

Now we substitute the result of Lemma \ref{L3.1} into \eqref{C-3-3} to get
\begin{align}
\begin{aligned}\label{C-3-4}
\dot{u}&=-\dot{w}+\frac{1-\|w\|^2}{\|M_t(x)-w\|^2}\left(\Omega(M_t(x)-w)+\frac{1}{2}(1-\|w\|^2)X-\langle M_t(x), X\rangle M_t(x)+\langle w, X\rangle w\right)\\
&\hspace{0.5cm}-\frac{(1-\|w\|^2)\langle w, X\rangle}{\|M_t(x)-w\|^2}(M_t(x)-w)+\frac{(1-\|w\|^2)\langle M_t(x)+w, X\rangle}{\|M_t(x)-w\|^2}(M_t(x)-w)\\
&=-\dot{w}+\frac{(1-\|w\|^2)\langle M_t(x)+w, X\rangle}{\|M_t(x)-w\|^2}(M_t(x)-w)\\
&\hspace{0.5cm}+\frac{1-\|w\|^2}{\|M_t(x)-w\|^2}\Big(\Omega(M_t(x)-w)+\frac{1}{2}(1-\|w\|^2)X-\langle M_t(x), X\rangle M_t(x)\\
&\hspace{8cm}+\langle w, X\rangle w-\langle w, X\rangle(M_t(x)-w)\Big)\\
&=-\dot{w}+\langle M_t(x)+w, X\rangle (u+w)+\Omega(u+w)\\
&\hspace{0.5cm}+\frac{1-\|w\|^2}{\|M_t(x)-w\|^2}\left(\frac{1}{2}(1-\|w\|^2)X-\langle M_t(x)+w, X\rangle M_t(x)+2\langle w, X\rangle w\right)\\
&=-\Omega w-\frac{1}{2}(1+\|w\|^2)X+\langle w, X\rangle w+\langle M_t(x)+w, X\rangle (u+w)+\Omega(u+w)\\
&\hspace{0.5cm}+\frac{1}{2}\|u+w\|^2X-\frac{1-\|w\|^2}{\|M_t(x)-w\|^2}\langle M_t(x)+w, X\rangle M_t(x)+\frac{2(1-\|w\|^2)\langle w, X\rangle }{\|M_t(x)-w\|^2}w\\
&=\Omega u+\langle u, w\rangle X+\langle w, X\rangle w+\langle M_t(x)+w, X\rangle (u+w)-\frac{1-\|w\|^2}{\|M_t(x)-w\|^2}\langle M_t(x)+w, X\rangle M_t(x)\\
&\hspace{0.5cm}+\frac{2(1-\|w\|^2)\langle w, X\rangle }{\|M_t(x)-w\|^2}w.
\end{aligned}
\end{align}
By direct calculation, we have
\begin{align}
\begin{aligned}\label{C-3-4-1}
&\langle w, X\rangle w+\langle M_t(x)+w, X\rangle (u+w)-\frac{1-\|w\|^2}{\|M_t(x)-w\|^2}\langle M_t(x)+w, X\rangle M_t(x)
+\frac{2(1-\|w\|^2)\langle w, X\rangle }{\|M_t(x)-w\|^2}w\\
=&\left(1+\frac{2(1-\|w\|^2)}{\|M_t(x)-w\|^2}\right)\langle w, X\rangle w+\langle M_t(x)+w, X\rangle\left(u+w-\frac{1-\|w\|^2}{\|M_t(x)-w\|^2}M_t(x)\right)\\
=&\left(1+\frac{2(1-\|w\|^2)}{\|M_t(x)-w\|^2}\right)\langle w, X\rangle w-\langle M_t(x)+w, X\rangle\frac{1-\|w\|^2}{\|M_t(x)-w\|^2}w\\
=&\left\langle w+\frac{2(1-\|w\|^2)}{\|M_t(x)-w\|^2}w-\frac{1-\|w\|^2}{\|M_t(x)-w\|^2}(M_t(x)+w),X\right\rangle w\\
=&\left\langle w-\frac{1-\|w\|^2}{\|M_t(x)-w\|^2}(M_t(x)-w),X\right\rangle w\\
=&\left\langle w-(u+w),X\right\rangle w=-\langle u, X\rangle w.
\end{aligned}
\end{align}
Now we substitute \eqref{C-3-4-1} into \eqref{C-3-4} to obtain
\[
\dot{u}=\Omega u+\langle u, w\rangle X-\langle u, X\rangle w
\]
and this can be rewritten as
\begin{align}\label{C-3-4-2}
\dot{u}=(\Omega+Xw^\top - wX^\top )u. 
\end{align}
By \eqref{C-3-1-1}, we can substitute $u(t)=R(t)x$, where $R(t)$ is an orthogonal matrix with size $(d+1)\times (d+1)$ for all $t\geq0$ and especially $R(0)=I_{d+1}$. If we substitute $u(t)=R(t)x$ into \eqref{C-3-4-2}, we have 
\begin{align*}
\dot{R}x=(\Omega R+Xw^\top-wX^\top)R x.
\end{align*}
If we set $R$ is a solution of the following ODE system:
\begin{align}\label{C-3-6}
\begin{cases}
\dot{R}=(\Omega+Xw^\top-wX^\top)R,\quad t>0,\\
R(0)=I_{d+1},
\end{cases}
\end{align}
then relation \eqref{C-3-4-2} holds. Now we define the map $M_t:\bbs^d\to\bbs^d$ as follows:
\begin{align}\label{C-3-7}
M_t(x)=w(t)+\frac{(R(t)x+w(t))(1-\|w(t)\|^2)}{\|R(t)x+w(t)\|^2},
\end{align}
where $w$ is a solution of system \eqref{C-3-1-0} and $R$ is a solution of system \eqref{C-3-6}. Then we have
\[
\begin{cases}
\displaystyle\frac{d}{dt}M_t(x)=\Omega x+X(t)-\langle x, X(t)\rangle x,\quad t>0,\\
M_0(x)=x,\quad \forall~x\in\bbs^d,
\end{cases}
\]
since we constructed $R$ and $w$ to satisfy it. So, we can conclude that 
\[
\rho_t=M_t\# \rho_0
\]
is a solution of system \eqref{C-(1)} with the initial data $\rho_0$, since
\[
\partial_t(M_t\# \rho_t)=-\nabla \cdot(\rho_t v_t),
\]
where $v_t(M_t(x))=\frac{d}{dt}M_t(x)$. This implies that, if an explicit form of $X(t)$ is given for all $t\geq0$, then we can solve a PDE \eqref{C-(1)} from solving a ODE system \eqref{B-(4)}. We can summarize this result as the following theorem.
\begin{theorem}\label{Thm3.1}
Let $\rho_t$ be a solution of system \eqref{C-(1)} with the initial data $\rho_0$. Then $\rho_t=M_t\#\rho_0$, where $M_t$ is defined in \eqref{C-3-7}.
\end{theorem}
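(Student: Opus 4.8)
The plan is to read the statement for what it literally asserts: that an \emph{arbitrary} solution $\rho_t$ of \eqref{C-(1)} with initial datum $\rho_0$ must coincide with the explicit push-forward $M_t\#\rho_0$. This is a uniqueness statement for the continuity equation, and I would prove it by a duality (method-of-characteristics) argument. First I would fix one solution $\rho_t$ and regard its associated $X(t)$ as a prescribed continuous function of time; with $X$ so frozen, the field $v_t(x)=\Omega x+X(t)-\langle x, X(t)\rangle x$ is a fixed time-dependent vector field, smooth in $x$ and continuous in $t$, and it is tangent to $\bbs^d$ because $\langle v_t(x), x\rangle=\langle\Omega x, x\rangle+\langle X, x\rangle-\langle x, X\rangle\|x\|^2=0$ by skew-symmetry of $\Omega$ and $\|x\|=1$. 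Consequently the flow map constructed in Section \ref{sec:3.1} is a well-defined family of diffeomorphisms $M_t:\bbs^d\to\bbs^d$ obeying $\frac{d}{dt}M_t(x)=v_t(M_t(x))$ and $M_0=\mathrm{id}$, with $M_t$ given explicitly by \eqref{C-3-7}.

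Next I would record the weak form of \eqref{C-(1)}: integrating against $\phi\in C^\infty(\bbs^d)$ and applying the divergence theorem on the closed manifold $\bbs^d$ (no boundary term, as $\rho_t v_t$ is tangent) gives
\begin{align*}
\frac{d}{dt}\int_{\bbs^d}\phi(x)\rho_t(x)\,d\sigma_x=\int_{\bbs^d}\langle\nabla\phi(x), v_t(x)\rangle\rho_t(x)\,d\sigma_x.
\end{align*}
The core of the proof is to test against a \emph{time-dependent} function transported by the flow. Fixing $T>0$ and $\psi\in C^\infty(\bbs^d)$, I would set
\begin{align*}
u(t,x):=\psi\big(M_T(M_t^{-1}(x))\big),
\end{align*}
so that $u(T,\cdot)=\psi$ and, since $M_0=\mathrm{id}$, $u(0,x)=\psi(M_T(x))$. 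Because $u$ is constant along each characteristic $s\mapsto(s, M_s(y))$ of $v_t$, it solves the backward transport equation $\partial_t u+\langle\nabla u, v_t\rangle=0$.

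I would then differentiate $t\mapsto\int_{\bbs^d}u(t,x)\rho_t(x)\,d\sigma_x$. Using $\partial_t\rho_t=-\nabla\cdot(\rho_t v_t)$ together with the integration-by-parts identity above (applied to $u(t,\cdot)$), the $\rho_t$-derivative contributes $+\int\langle\nabla u, v_t\rangle\rho_t\,d\sigma$, while the $u$-derivative contributes $\int(\partial_t u)\rho_t\,d\sigma=-\int\langle\nabla u, v_t\rangle\rho_t\,d\sigma$; the two cancel, so $\int_{\bbs^d}u(t,x)\rho_t(x)\,d\sigma_x$ is constant in $t$. Equating its values at $t=0$ and $t=T$ yields
\begin{align*}
\int_{\bbs^d}\psi(x)\rho_T(x)\,d\sigma_x=\int_{\bbs^d}\psi\big(M_T(x)\big)\rho_0(x)\,d\sigma_x=\int_{\bbs^d}\psi(y)\,d(M_T\#\rho_0)(y).
\end{align*}
Since $\psi$ and $T$ are arbitrary, this forces $\rho_T=M_T\#\rho_0$ for every $T\geq0$, which is exactly the assertion.

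The main obstacle I anticipate is regularity bookkeeping rather than a conceptual difficulty: I must ensure the frozen field $v_t$ is regular enough (e.g. $X\in C([0,\infty);\bbr^{d+1})$) for $M_t$ and its inverse $M_t^{-1}$ to be jointly $C^1$ in $(t,x)$, so that $u(t,x)$ is differentiable and the interchange of $\frac{d}{dt}$ with the spatial integral is legitimate; and I must verify that the backward-transported $u$ satisfies the adjoint transport equation classically on $\bbs^d$. Once these points are secured, the cancellation computation is routine, and the resulting duality identity delivers uniqueness and hence the identification $\rho_t=M_t\#\rho_0$ directly, with no separate need to check that $M_t\#\rho_0$ is itself a solution.
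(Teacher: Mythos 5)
Your proof is correct, but it runs in the opposite direction from the paper's argument, and it is in fact the more complete of the two. The paper's proof is constructive--verificational: Section \ref{sec:3.1} derives the ODE systems \eqref{C-3-1-0} and \eqref{C-3-6} for $w$ and $R$ precisely so that the map $M_t$ of \eqref{C-3-7} satisfies $\frac{d}{dt}M_t(x)=v_t(M_t(x))$ with $M_0=\mathrm{id}$, and then simply invokes the standard fact that the push-forward of $\rho_0$ along the flow of $v_t$ satisfies the continuity equation, i.e.\ it shows that $M_t\#\rho_0$ \emph{is a} solution of \eqref{C-(1)}; the identification of an arbitrary solution $\rho_t$ with this push-forward is left to an unstated uniqueness principle for the linear continuity equation. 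You instead take the given solution $\rho_t$, freeze its $X(t)$ to obtain a fixed time-dependent tangent field, and prove the identification directly by duality, testing against the backward-transported function $u(t,x)=\psi(M_T(M_t^{-1}(x)))$ solving $\partial_t u+\langle\nabla u, v_t\rangle=0$, so that $\int_{\bbs^d}u(t,\cdot)\rho_t\,d\sigma$ is conserved and $\rho_T=M_T\#\rho_0$ follows for every test function $\psi$. What each approach buys: the paper's computation is what produces the explicit WS-form \eqref{C-3-7} of $M_t$ (which you correctly import rather than re-derive, using only the flow property and the diffeomorphism structure $M_t=W_{w(t)}\circ R(t)$), while your duality argument supplies exactly the uniqueness step the paper glosses over, so that the theorem's literal assertion ``every solution equals $M_t\#\rho_0$'' is actually proved. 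A further merit of your formulation is that it applies verbatim in the mean-field case such as \eqref{C-(3)}, where $X$ depends on $\rho_t$: for any \emph{given} solution, $X(t)$ is still a determined function of time, so your argument yields the representation formula for that solution (though of course not uniqueness for the nonlinear problem). Your closing regularity caveats are the right ones and are easily discharged here, since $v_t$ is smooth in $x$, continuous in $t$ for $X\in C([0,\infty);\bbr^{d+1})$, and $M_t$, $M_t^{-1}$ are jointly $C^1$ by the explicit formula.
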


\begin{remark}
For the kinetic swarm sphere model \eqref{C-(3)}, $X(t)$ is given with \eqref{C-(2)}. Since $X(t)$ depends on $\rho_t$, we do not have an explicit form of $X(t)$ before solving the system. 
\end{remark}

\subsection{Constant of motion for the kinetic sphere model}\label{sec:3.2}
In this subsection, we study a constant of motion functionals for system \eqref{C-(1)}. Before we present a result, we provide an idea to get candidates of  constant of motion functionals for a kinetic model \eqref{C-(1)}.

\subsubsection{Formal derivation of constant of motion functionals}\label{sec:3.2.1}
Recall that constant of motion functionals for system \eqref{B-(3)} are given as
\[
\mathcal{C}_{i_1i_2i_3i_4}(\mathcal{X})=\frac{\|x_{i_1}-x_{i_2}\|^2\cdot \|x_{i_3}-x_{i_4}\|^2}{\|x_{i_2}-x_{i_3}\|^2\cdot\|x_{i_4}-x_{i_1}\|^2}\quad\text{and}\quad
\mathcal{C}_\Lambda(\mathcal{X}):=\prod_{\ell=1}^k\frac{\|x_{i_{2\ell-1}}-x_{i_{2\ell}}\|^2}{\|x_{i_{2\ell}}-x_{i_{2\ell+1}}\|^2},
\]
where $(i_1, i_2, i_3, i_4)\in C_2$ and $\Lambda=(i_1, i_2, \cdots, i_{2k})\in C_k$ for $k\geq2$. Using this fact, we can construct a cross-ratio type constant of motion functionals formally.\\

 First, we consider a cross-ratio type functional $\mathcal{C}_{i_1i_2i_3i_4}$, where $(i_1, i_2, i_3, i_4)\in C_2$. Since it is a constant of motion, we know that $\mathcal{C}_{i_1i_2i_3i_4}^p$ is also a constant motion for any $p\in\bbr$. Again, this implies that
\[
\frac{1}{|C_2|}\left(\sum_{ (i_1, i_2, i_3, i_4)\in C_2}\mathcal{C}_{i_1i_2i_3i_4}^p\right)
\]
is a constant of motion. Letting $N\to\infty$ yields the following formal calculation:
\begin{align}\label{C-(4)}
\frac{1}{|C_2|}\left(\sum_{ (i_1, i_2, i_3, i_4)\in C_2}\mathcal{C}_{i_1i_2i_3i_4}^p\right)\simeq\int_{(\bbs^d)^4}\mathcal{C}(x_1, x_2, x_3,x_4)^p\rho_t(x_1)\rho_t(x_2)\rho_t(x_3)\rho_t(x_4)d\sigma_1d\sigma_2d\sigma_3d\sigma_4,
\end{align}
where $\rho_t$ is a particle density function defined on the unit sphere $\bbs^d$ and $\mathcal{C}:(\bbs^d)^4\to \bbr$ defined as
\[
\mathcal{C}(x_1, x_2, x_3, x_4)=\frac{\|x_1-x_2\|^2\cdot\|x_3-x_4\|^2}{\|x_1-x_3\|^2\cdot\|x_2-x_4\|^2}.
\]
So we can expect that the R.H.S. of \eqref{C-(4)} is a candidate for constant of motion functionals.\\

Second, we consider an extended cross-ratio type functional $\mathcal{C}_\Lambda$, where $\Lambda\in C_k$ with $k\geq2$. Similarly we know that $\mathcal{C}_\Lambda^p$ is also a constant motion for $p\in\bbr$. This also implies that
\[
\frac{1}{|C_k|}\sum_{ \Lambda \in C_k}\mathcal{C}_\Lambda^p
\]
is a constant of motion. If we define
\[
\mathcal{C}_k(x_1, x_2, \cdots, x_{2k}):=\prod_{\ell=1}^k\frac{\|x_{2\ell-1}-x_{2\ell}\|^2}{\|x_{2\ell}-x_{2\ell+1}\|^2},
\]
then we can also obtain the following formal calculation
\begin{align}\label{C-(5)}
\frac{1}{|C_k|}\sum_{ \Lambda \in C_k}\mathcal{C}_\Lambda^p\simeq \int_{(\bbs^d)^{2k}}\mathcal{C}_k(x_1, x_2, \cdots, x_{2k})^p\rho_t(x_1)\rho_t(x_2)\cdots\rho_t(x_{2k})d\sigma_1d\sigma_2\cdots d\sigma_{2k},
\end{align}
and we can expect that the R.H.S. of \eqref{C-(5)} is also a candidate for constant of motion functionals.\\

Now we have two candidates of constant of motion functionals for system \eqref{C-(1)}, and we define these candidates functional as follows.
\begin{definition}\label{Def3.1}
We define functionals $\mathcal{H}_p$ and $\mathcal{H}_{p, k}$ for all $p\in\bbr$ and $k\geq2$ as follows:
\[
\mathcal{H}_p[\rho]:=\int_{(\bbs^d)^4}\mathcal{C}(x_1, x_2, x_3, x_4)^p\rho(x_1)\rho(x_2)\rho(x_3)\rho(x_4)d\sigma_1d\sigma_2d\sigma_3d\sigma_4
\]
and
\[
\mathcal{H}_{p, k}[\rho]:=\int_{(\bbs^d)^{2k}}\mathcal{C}_k(x_1, x_2, \cdots, x_{2k})^p \rho(x_1)\rho(x_2)\cdots\rho(x_{2k})d\sigma_1d\sigma_2\cdots d\sigma_{2k},
\]
where $\rho\in\mathcal{P}(\bbs^d)$.
\end{definition}

\begin{proposition}\label{Pro3.1}
We have the following properties of functionals $\mathcal{H}_p$ and $\mathcal{H}_{p, k}$ for $p\in\bbr$ and an integer $k\geq2$.
\begin{enumerate}
\item $\mathcal{H}_p$ is a special case of $\mathcal{H}_{p, k}$. i.e. $\mathcal{H}_{p, 2}=\mathcal{H}_p$.
\item $\mathcal{H}_{-p, k}=\mathcal{H}_{p, k}$.
\item If we put $p=0$, we have $\mathcal{H}_{0, k}\equiv 1$.
\end{enumerate}
\end{proposition}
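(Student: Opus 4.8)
The plan is to derive all three identities from a single structural feature of the functionals: the integration measure $d\mu := \rho(x_1)\cdots\rho(x_{2k})\,d\sigma_1\cdots d\sigma_{2k}$ is a \emph{symmetric} product measure, hence invariant under every permutation of the integration variables, while the integrand $\mathcal{C}_k^p$ satisfies a handful of elementary pointwise identities. Because each integrand is nonnegative, every equality below may be read in the extended sense in $[0,\infty]$, so no finiteness hypothesis is needed here (the question of when $\mathcal{H}_{p,k}[\rho]<\infty$ is exactly what Section \ref{sec:4} addresses).

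For item (1), first I would set $k=2$ in the definition of $\mathcal{C}_k$, obtaining $\mathcal{C}_2(x_1,x_2,x_3,x_4)=\frac{\|x_1-x_2\|^2\|x_3-x_4\|^2}{\|x_2-x_3\|^2\|x_4-x_1\|^2}$, and compare it with $\mathcal{C}(x_1,x_2,x_3,x_4)$. Applying the transposition $x_3\leftrightarrow x_4$ to the integration variables — which leaves $d\mu$ unchanged — carries $\mathcal{C}$ to $\mathcal{C}_2$ pointwise, so the two integrals coincide and $\mathcal{H}_{p,2}=\mathcal{H}_p$. For item (3), I would simply note that $\mathcal{C}_k^0\equiv 1$ everywhere off the diagonal $\{x_i=x_{i+1}\}$, a $d\mu$-null set (here continuity of $\rho$ guarantees no atoms), so $\mathcal{H}_{0,k}[\rho]=\int_{(\bbs^d)^{2k}}\rho(x_1)\cdots\rho(x_{2k})\,d\sigma_1\cdots d\sigma_{2k}=\big(\int_{\bbs^d}\rho\,d\sigma\big)^{2k}=1$ by Fubini and the normalization $\int_{\bbs^d}\rho\,d\sigma=1$.

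The crux is item (2), and the key observation I would isolate is that a cyclic shift of the labels by one position inverts $\mathcal{C}_k$. Writing $\mathcal{C}_k=\big(\prod_\ell\|x_{2\ell-1}-x_{2\ell}\|^2\big)/\big(\prod_\ell\|x_{2\ell}-x_{2\ell+1}\|^2\big)$ with $x_{2k+1}=x_1$, the numerator collects the ``odd'' edges of the cycle $x_1-x_2-\cdots-x_{2k}-x_1$ and the denominator the ``even'' edges. Evaluating $\mathcal{C}_k$ at the shifted tuple $(x_2,x_3,\ldots,x_{2k},x_1)$ interchanges these two edge sets, so $\mathcal{C}_k(x_2,\ldots,x_{2k},x_1)=\mathcal{C}_k(x_1,\ldots,x_{2k})^{-1}$. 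Since the shift is a permutation of the integration variables and $d\mu$ is permutation-invariant, relabeling gives $\mathcal{H}_{p,k}[\rho]=\int\mathcal{C}_k(x_2,\ldots,x_{2k},x_1)^p\,d\mu=\int\mathcal{C}_k(x_1,\ldots,x_{2k})^{-p}\,d\mu=\mathcal{H}_{-p,k}[\rho]$.

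The only genuinely non-mechanical step is spotting the correct relabeling in each case — the transposition $x_3\leftrightarrow x_4$ for (1) and, more importantly, the length-one cyclic shift for (2); once these are in hand the rest is the bookkeeping of matching edge sets and invoking permutation-invariance of the product measure. I expect no analytic obstacle, since positivity of the integrands lets every identity be interpreted in $[0,\infty]$.
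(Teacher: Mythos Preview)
Your proposal is correct and matches the paper's proof essentially line for line: item (2) is handled in both by the one-step cyclic shift $\mathcal{C}_k(x_2,\ldots,x_{2k},x_1)=\mathcal{C}_k(x_1,\ldots,x_{2k})^{-1}$ followed by relabeling, and item (3) by $\mathcal{C}_k^0\equiv 1$ and the normalization of $\rho$. The only cosmetic difference is in item (1): the paper simply declares it ``trivial by definition'' (since $\mathcal{C}_2$ coincides with the cross-ratio $\mathcal{C}$ as displayed in \eqref{A-4} and in Section~\ref{sec:2.2}), whereas you insert a transposition $x_3\leftrightarrow x_4$ to reconcile with the slightly different denominator appearing in the in-text formula for $\mathcal{C}$ in Section~\ref{sec:3.2.1}; either way the conclusion is immediate.
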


\begin{proof}
\noindent (1) By the definition of $\mathcal{H}_p$ and $\mathcal{H}_{p, k}$, it is trivial.\\

\noindent (2) From direct calculations, we have
\begin{align*}
\mathcal{H}_{-p, k}[\rho]&=\int_{(\bbs^d)^{2k}}\mathcal{C}_k(x_1, x_2, \cdots, x_{2k})^{-p} \rho(x_1)\rho(x_2)\cdots\rho(x_{2k})d\sigma_1d\sigma_2\cdots d\sigma_{2k}\\
&=\int_{(\bbs^d)^{2k}}\mathcal{C}_k(x_2, x_3, \cdots, x_{2k}, x_1)^{p} \rho(x_1)\rho(x_2)\cdots\rho(x_{2k})d\sigma_1d\sigma_2\cdots d\sigma_{2k}\\
&=\int_{(\bbs^d)^{2k}}\mathcal{C}_k(x_1, x_2, \cdots, x_{2k-1}, x_{2k})^{p} \rho(x_{2k})\rho(x_1)\cdots\rho(x_{2k-1})d\sigma_1d\sigma_2\cdots d\sigma_{2k}\\
&=\mathcal{H}_{p, k}[\rho].
\end{align*}
Here, we used the definition of $\mathcal{C}_k$ in the second equality and variable change in the third equality. Since above relation holds for all $\rho\in\mathcal{P}(\bbs^d)$, we know that $\mathcal{H}_{-p, k}=\mathcal{H}_{p, k}$.\\

\noindent (3) Since $\mathcal{C}_k^0\equiv1$, we have
\[
\mathcal{H}_{0, k}[\rho]=\int_{(\bbs^d)^{2k}}\rho(x_1)\rho(x_2)\cdots\rho(x_{2k})d\sigma_1d\sigma_2\cdots d\sigma_{2k}=1.
\]
\end{proof}

Functionals $\mathcal{H}_p$ and $\mathcal{H}_{p, k}$ depend on $p\in\bbr$. We provide the importance of a choice of $p\in\bbr$ from an example. We consider system \eqref{C-(1)} on the circle($\bbs^1$). Then $\rho_t$ is a probability density function on the unit circle $\bbs^1$. The system can be reduced to the following Kuramoto-Sakaguchi system:
\begin{align*}
\begin{cases}
\partial_t\rho_t+\partial_\theta(\mathcal{V}[\rho_t]\rho_t)=0,\quad t>0,\\
\displaystyle\mathcal{V}[\rho]_t(\theta):=\kappa \int_{\mathbb{T}}\sin(\theta_*-\theta)\rho_t(\theta_*)d\theta_*,\quad \theta\in\bbt^1.
\end{cases}
\end{align*}
We substitute $x(\theta)=(\cos\theta, \sin\theta)$ to obtain
\[
\|x_1-x_2\|^2=2(1-\cos(\theta_1-\theta_2))=4\sin^2\left(\frac{\theta_1-\theta_2}{2}\right),
\]
and for the simplicity we abuse a notation $\rho_t(\theta):=\rho_t((\cos\theta, \sin\theta))$. From this, we can express $\mathcal{H}_p[\rho_t]$ as follows:
\begin{align*}
\mathcal{H}_p[\rho_t]&=\int_{(\bbs^1)^4}\left(\frac{\|x_1-x_2\|\cdot \|x_3-x_4\|}{\|x_2-x_3\|\cdot \|x_4-x_1\|}\right)^{2p}\rho_t(x_1)\rho_t(x_2)\rho_t(x_3)\rho_t(x_4)d\sigma_1d\sigma_2d\sigma_3d\sigma_4\\
&=\int_{[-\pi, \pi]^4}\left(\frac{\sin\left(\frac{\theta_1-\theta_2}{2}\right)\sin\left(\frac{\theta_3-\theta_4}{2}\right)}{\sin\left(\frac{\theta_2-\theta_3}{2}\right)\sin\left(\frac{\theta_4-\theta_1}{2}\right)}\right)^{2p}\rho_t(\theta_1)\rho_t(\theta_2)\rho_t(\theta_3)\rho_t(\theta_4)d\theta_1d\theta_2d\theta_3d\theta_4.
\end{align*}
Now we consider $p=\frac{1}{2}$. If $\rho_t$ is the uniform distribution on the circle, from
\[
\frac{2}{\pi}|x|\leq |\sin x|\leq |x|\quad \forall~x\in [-\pi, \pi]
\]
then we have
\begin{align*}
\mathcal{H}_{\frac{1}{2}}[\rho_t]&=\left(\frac{1}{2\pi}\right)^4\int_{[-\pi, \pi]^4}\left|\frac{\sin\left(\frac{\theta_1-\theta_2}{2}\right)\sin\left(\frac{\theta_3-\theta_4}{2}\right)}{\sin\left(\frac{\theta_2-\theta_3}{2}\right)\sin\left(\frac{\theta_4-\theta_1}{2}\right)}\right|d\theta_1d\theta_2d\theta_3d\theta_4\\
&\geq\left(\frac{1}{2\pi}\right)^4\left(\frac{2}{\pi}\right)^2\int_{[-\pi, \pi]^4}\left|\frac{(\theta_1-\theta_2)(\theta_3-\theta_4)}{(\theta_2-\theta_3)(\theta_4-\theta_1)}\right|d\theta_1d\theta_2d\theta_3d\theta_4.
\end{align*}
If we consider the neighborhood of singularities $\theta_2=\theta_3$ or $\theta_1=\theta_4$, then the integration diverges to infinity. So, $\mathcal{H}_{\frac{1}{2}}[\rho_t]=\infty$ and it does not exist in this case. This implies that, for some $p$, $\mathcal{H}_p[\rho_t]$ does not exist. We need to find a condition of $p$ which guarantee the existence of $\mathcal{H}_p[\rho_t]$. In the rest of this subsection, we do not consider the existence of $\mathcal{H}_p[\rho_t]$, but we only prove that $\mathcal{H}_p$ and $\mathcal{H}_{p, k}$ are constant of motion functional. We remain details on the existence in Section \ref{sec:4}.\\

\subsubsection{$\mathcal{H}_p$ and $\mathcal{H}_{p, k}$ are constant of motion functionals}\label{sec:3.2.2}
Now we prove that $\mathcal{H}_p$ and $\mathcal{H}_{p, k}$ are constants of motion, if they exist.
Let $\rho_t$ be a solution of system \eqref{C-(1)}. Recall that
\[
\rho_t=M_t\#\rho_0.
\]
From simple calculations, we have
\begin{align*}
\mathcal{H}_p[\rho_t]=&\int_{(\bbs^d)^4}\mathcal{C}(x_1, x_2, x_3, x_4)^p\rho_t(x_1)\rho_t(x_2)\rho_t(x_3)\rho_t(x_4)d\sigma_1d\sigma_2d\sigma_3d\sigma_4\\
=&\int_{(\bbs^d)^4}\mathcal{C}(M_t(x_1), M_t(x_2), M_t(x_3), M_t(x_4))^p\rho_0(x_1)\rho_0(x_2)\rho_0(x_3)\rho_0(x_4)d\sigma_1d\sigma_2d\sigma_3d\sigma_4\\
=&\int_{(\bbs^d)^4}\mathcal{C}(x_1, x_2, x_3, x_4)^p\rho_0(x_1)\rho_0(x_2)\rho_0(x_3)\rho_0(x_4)d\sigma_1d\sigma_2d\sigma_3d\sigma_4=\mathcal{H}_p[\rho_0].
\end{align*}
In the second equality, we used the simple fact of the push forward of measure, and in third equality, we used that $M_t$ leaves $\mathcal{C}$ invariant. This implies that $\mathcal{H}_p[\rho_t]$ is a constant of motion for system \eqref{C-(1)} if it exists. Similarly, $\mathcal{H}_{p, k}[\rho_t]$ is also a constant of motion for system \eqref{C-(1)}, since $M_t$ leaves $\mathcal{C}_k$ invariant. We can summarize these results as the following theorem.
\begin{theorem}\label{Thm3.2}
Let $\rho_t$ be a solution of system \eqref{C-(1)}. Then $\mathcal{H}_p$ and $\mathcal{H}_{p, k}$ are constant of motion functionals for all $p\in\bbr$ and an integer $k\geq2$ if they exist.
\end{theorem}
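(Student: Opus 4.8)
The plan is to derive conservation of $\mathcal{H}_p$ and $\mathcal{H}_{p,k}$ directly from the push-forward representation $\rho_t=M_t\#\rho_0$ established in Theorem \ref{Thm3.1}, together with the fact that $M_t$ leaves the cross ratio $\mathcal{C}$ (and hence the extended cross ratio $\mathcal{C}_k$) invariant, as already recorded in Section \ref{sec:2.2}. Concretely, I want to show $\mathcal{H}_p[\rho_t]$ is independent of $t$ by transporting the $t$-dependence out of the integrand and into a change of variables that $M_t$ then absorbs.

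First I would note that, since $M_t$ acts diagonally on the product space, the four-fold tensor measure satisfies $\rho_t^{\otimes 4}=(M_t)^{\times 4}\#\,\rho_0^{\otimes 4}$, an immediate consequence of Theorem \ref{Thm3.1} and the definition of a product push-forward. Applying the elementary change-of-variables identity $\int F\,d\big((M_t)^{\times 4}\#\rho_0^{\otimes 4}\big)=\int F\circ (M_t)^{\times 4}\,d\rho_0^{\otimes 4}$ with $F=\mathcal{C}^p$ turns
\[
\mathcal{H}_p[\rho_t]=\int_{(\bbs^d)^4}\mathcal{C}(x_1,x_2,x_3,x_4)^p\,\rho_t(x_1)\cdots\rho_t(x_4)\,d\sigma_1\cdots d\sigma_4
\]
into the same integral with integrand $\mathcal{C}(M_t(x_1),M_t(x_2),M_t(x_3),M_t(x_4))^p$ weighted by $\rho_0(x_1)\cdots\rho_0(x_4)$. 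Invoking the invariance $\mathcal{C}(M_t(x_1),\ldots,M_t(x_4))=\mathcal{C}(x_1,\ldots,x_4)$ collapses the integrand back to $\mathcal{C}(x_1,\ldots,x_4)^p$, yielding $\mathcal{H}_p[\rho_t]=\mathcal{H}_p[\rho_0]$ for every $t\geq0$, which is the asserted conservation. The case $\mathcal{H}_{p,k}$ is identical after replacing $\mathcal{C}$ by $\mathcal{C}_k$ on $(\bbs^d)^{2k}$; and one can bypass a separate invariance computation for $\mathcal{C}_k$ by reusing the product decomposition $\mathcal{C}_k=\prod_{\ell}\mathcal{C}_{i_1 i_{2\ell}i_{2\ell+1}i_{2\ell+2}}$ from the proof of Corollary \ref{Cor2.1}, so that invariance of $\mathcal{C}_k$ is inherited factorwise from invariance of $\mathcal{C}$.

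The main obstacle is not the measure bookkeeping but the invariance statement $\mathcal{C}\circ(M_t)^{\times 4}=\mathcal{C}$ itself, if one does not simply cite Section \ref{sec:2.2}. Writing $M_t=W_{w(t)}\circ R(t)$, the rotation $R(t)\in\mathrm{SO}(d+1)$ preserves every inner product $u\cdot v$, hence every factor $1-u\cdot v=\tfrac12\|u-v\|^2$, so it fixes $\mathcal{C}$ trivially; the entire content sits in the conformal factor $W_w$. Here the key computation is
\[
\|W_w(u)-W_w(v)\|^2=\frac{(1-\|w\|^2)^2\,\|u-v\|^2}{\|u+w\|^2\,\|v+w\|^2},
\]
which I would obtain by setting $a=u+w$, $b=v+w$ and expanding $\big\|a/\|a\|^2-b/\|b\|^2\big\|^2=\|a-b\|^2/(\|a\|^2\|b\|^2)$. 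Substituting this into $\mathcal{C}$ shows that each vertex $u_i$ contributes the same prefactor $\|u_i+w\|^{-2}$ to the numerator and to the denominator — the pairs $(1,2),(3,4)$ above and $(2,3),(4,1)$ below each touch all four vertices once — so the prefactors and the powers of $(1-\|w\|^2)$ cancel exactly, leaving $\mathcal{C}$ unchanged. This cancellation is the precise reason the cross ratio, rather than any single distance, is the conserved object, and it is the only step requiring genuine computation. Finally, the hypothesis \emph{if they exist} is used only to guarantee finiteness of the integrals; no differentiation under the integral sign is performed, and since the pointwise change of variables is an exact measure identity, finiteness at $t=0$ is transported to all $t\geq0$ automatically.
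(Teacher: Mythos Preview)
Your proposal is correct and follows essentially the same route as the paper: push forward via $\rho_t=M_t\#\rho_0$ from Theorem~\ref{Thm3.1}, change variables in the product integral, and invoke the $M_t$-invariance of $\mathcal{C}$ (and hence of $\mathcal{C}_k$) to conclude $\mathcal{H}_p[\rho_t]=\mathcal{H}_p[\rho_0]$ and $\mathcal{H}_{p,k}[\rho_t]=\mathcal{H}_{p,k}[\rho_0]$. The only difference is that you supply the explicit verification of the invariance of $\mathcal{C}$ under $W_w$ via the identity $\|W_w(u)-W_w(v)\|^2=(1-\|w\|^2)^2\|u-v\|^2/(\|u+w\|^2\|v+w\|^2)$, whereas the paper simply cites this fact from Section~\ref{sec:2.2}.
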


\section{Existence of $\mathcal{H}_p$ and $\mathcal{H}_{p, k}$}\label{sec:4}
\setcounter{equation}{0}

In this section, we find necessary and sufficient conditions for $p$ which guarantee the existence of $\mathcal{H}_p[\rho_t]$. We define the uniform distribution on the sphere $\bbs^d$ as follows:
\[
\rho_{\mathrm{uni}}\equiv\frac{1}{|\bbs^d|},
\]
where $|\bbs^d|$ is a $d$-dimensional Hausdorff measure of $\bbs^d$. The main goal of this section is to show the following three statements ($\mathcal{S}1$)-($\mathcal{S}3$) are equivalent:\\

($\mathcal{S}1$) $p$ and $d$ satisfy $p\in\left(-\frac{d}{2},\frac{d}{2}\right)$,\vspace{0.2cm}

($\mathcal{S}2$) $\mathcal{H}_p[\rho_{\mathrm{uni}}]$ exists for uniform distribution $\rho_{\mathrm{uni}}$,\vspace{0.2cm}

($\mathcal{S}3$) $\mathcal{H}_p[\rho]$ exists for an arbitrary probability density fnction $\rho\in\mathcal{P}(\bbs^d)$.\\

The following flow chart shows the relation between three statements and their connections:
\[
(\mathcal{S}1)\quad \substack{\displaystyle\xRightarrow[]{\text{Theorem \ref{Thm4.1}}}\\\displaystyle\xLeftarrow[\text{Theorem \ref{Thm4.2}}]{}}\quad(\mathcal{S}2)\quad \substack{\displaystyle\xRightarrow[]{\text{Lemma \ref{Lem4.1}}}\\\displaystyle\xLeftarrow[\text{Theorem \ref{Thm4.3}}]{}}\quad (\mathcal{S}3).
\]

\begin{theorem}\label{Thm4.1}
Let $p$ be a real number and $d$ be the dimension of the domain $\bbs^d$. If $p$ and $d$ satisfy the following relation:
\[
 p\in\left(- \frac{d}{2}, \frac{d}{2}\right),
\]
then $\mathcal{H}_p[\rho_{\mathrm{uni}}]$ exists.
\end{theorem}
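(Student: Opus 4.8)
The plan is to collapse the four-fold integral into a single two-point integral on the sphere and then decide its convergence by a local computation near the diagonal. First I would invoke Proposition \ref{Pro3.1}: since $\mathcal{H}_{-p}=\mathcal{H}_p$ and $\mathcal{H}_0\equiv 1$, it suffices to treat the case $p>0$, and the range $(-\frac{d}{2},\frac{d}{2})$ will follow by reflection. For $\rho_{\mathrm{uni}}\equiv 1/|\bbs^d|$ the functional reads
\[
\mathcal{H}_p[\rho_{\mathrm{uni}}]=\frac{1}{|\bbs^d|^4}\int_{(\bbs^d)^4}\frac{\|x_1-x_2\|^{2p}\,\|x_3-x_4\|^{2p}}{\|x_1-x_3\|^{2p}\,\|x_2-x_4\|^{2p}}\,d\sigma_1 d\sigma_2 d\sigma_3 d\sigma_4,
\]
and for $p>0$ the only possible sources of divergence are the zeros of the denominator, on the diagonals $x_1=x_3$ and $x_2=x_4$.

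Second, I would exploit the fact that every Euclidean chord satisfies $\|x_i-x_j\|\le 2$ on $\bbs^d$, so the numerator is bounded by $4^{2p}$ and the integrand is dominated by $4^{2p}\,\|x_1-x_3\|^{-2p}\|x_2-x_4\|^{-2p}$. The two factors depend on disjoint pairs of variables, so by Tonelli (the integrand is nonnegative) the majorant factorizes and
\[
\mathcal{H}_p[\rho_{\mathrm{uni}}]\le \frac{4^{2p}}{|\bbs^d|^4}\left(\int_{\bbs^d}\int_{\bbs^d}\frac{d\sigma_x\,d\sigma_{x'}}{\|x-x'\|^{2p}}\right)^{2}.
\]
This reduces existence of $\mathcal{H}_p[\rho_{\mathrm{uni}}]$ to finiteness of the single two-point integral $I:=\int_{\bbs^d}\int_{\bbs^d}\|x-x'\|^{-2p}\,d\sigma_x\,d\sigma_{x'}$.

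Third, by rotational invariance of the surface measure the inner integral $\int_{\bbs^d}\|x-x'\|^{-2p}\,d\sigma_x$ is independent of $x'$, so $I=|\bbs^d|\int_{\bbs^d}\|x-x_0\|^{-2p}\,d\sigma_x$ for $x_0$ the north pole. Writing $\theta\in[0,\pi]$ for the polar angle from $x_0$, one has $\|x-x_0\|=2\sin(\theta/2)$ and $d\sigma=(\sin\theta)^{d-1}\,d\theta\,d\omega_{d-1}$, hence
\[
\int_{\bbs^d}\|x-x_0\|^{-2p}\,d\sigma_x=|\bbs^{d-1}|\int_0^\pi \frac{(\sin\theta)^{d-1}}{\big(2\sin(\theta/2)\big)^{2p}}\,d\theta.
\]
Near $\theta=\pi$ the integrand stays bounded, so convergence is decided at $\theta=0$, where $(\sin\theta)^{d-1}\sim\theta^{d-1}$ and $(2\sin(\theta/2))^{2p}\sim\theta^{2p}$, giving local behavior $\theta^{d-1-2p}$; this is integrable exactly when $d-1-2p>-1$, i.e. $p<\frac{d}{2}$.

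The factorization step is what makes the argument short: bounding the numerator by $4^{2p}$ turns a genuinely coupled four-dimensional estimate into one standard radial integral, so the only delicate point that remains is the local integrability near the diagonal $\theta=0$, which is the routine computation above. (I only need the upper bound here, since Theorem \ref{Thm4.1} asserts existence; sharpness, i.e. divergence for $p\ge \frac{d}{2}$, is the business of the converse Theorem \ref{Thm4.2}.)
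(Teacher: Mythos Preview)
Your proposal is correct and follows essentially the same route as the paper's own proof: reduce to $p\ge 0$ via Proposition~\ref{Pro3.1}, bound the numerator by $4^{2p}$, factor the majorant into two identical two-point integrals, use rotational invariance to reduce to $\int_{\bbs^d}\|e-x\|^{-2p}\,d\sigma_x$, and settle convergence by the polar-angle computation $\|e-x\|=2\sin(\theta/2)$, $d\sigma_x=(\sin\theta)^{d-1}\,d\theta\,d\omega_{d-1}$, yielding integrability at $\theta=0$ exactly when $p<d/2$. The only cosmetic differences are that you invoke Tonelli explicitly and use the form of $\mathcal{C}$ with denominator $\|x_1-x_3\|\cdot\|x_2-x_4\|$ rather than $\|x_2-x_3\|\cdot\|x_4-x_1\|$ (both appear in the paper and are equivalent after relabeling).
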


\begin{proof}
From Proposition \ref{Pro3.1} (2), we know that $\mathcal{H}_{-p, k}=\mathcal{H}_{p, k}$. So we consider only the case when $0\leq p<\frac{d}{2}$. We have the following calculation
\begin{align*}
\mathcal{H}_p[\rho_{\mathrm{uni}}]&=\int_{(\bbs^d)^4}\left(\frac{\|x_1-x_2\|\cdot \|x_3-x_4\|}{\|x_2-x_3\|\cdot \|x_4-x_1\|}\right)^{2p}\rho(x_1)\rho(x_2)\rho(x_3)\rho(x_4)d\sigma_{1}d\sigma_{2}d\sigma_{3}d\sigma_{4}\\
&=\frac{1}{|\bbs^d|^4}\int_{(\bbs^d)^4}\left(\frac{\|x_1-x_2\|\cdot \|x_3-x_4\|}{\|x_2-x_3\|\cdot \|x_4-x_1\|}\right)^{2p}d\sigma_{1}d\sigma_{2}d\sigma_{3}d\sigma_{4}\\
&\leq \frac{1}{|\bbs^d|^4}\int_{(\bbs^d)^4}\left(\frac{4}{\|x_2-x_3\|\cdot \|x_4-x_1\|}\right)^{2p}d\sigma_{1}d\sigma_{2}d\sigma_{3}d\sigma_{4}\\
&=\frac{4^{2p}}{|\bbs^d|^4}\left(\int_{(\bbs^d)^2}\frac{1}{\|x_2-x_3\|^{2p}}d\sigma_{2}d\sigma_{3}\right)^2.
\end{align*}
This implies that if an integral
\[
\int_{(\bbs^d)^2}\frac{1}{\|x_2-x_3\|^{2p}}d\sigma_{2}d\sigma_{3}
\]
is finite, then $\mathcal{H}_p[\rho_{\mathrm{uni}}]$ exists. From the rotation symmetry of $\bbs^d$, we have
\begin{align*}
\int_{(\bbs^d)^2}\frac{1}{\|x_2-x_3\|^{2p}}d\sigma_{2}d\sigma_{3}=\int_{(\bbs^d)^2}\frac{1}{\|e-x_3\|^{2p}}d\sigma_{2}d\sigma_{3}=|\bbs^d|\int_{\bbs^d}\frac{1}{\|e-x\|^{2p}}d\sigma_{x}
\end{align*}
for any fixed vector $e\in\bbs^d$. To make a concrete calculation, we set
\[
e=(1, \underbrace{0, 0, \cdots, 0}_{d-\text{times}})\in\bbr^{d+1}.
\]
We also parametrize $x\in\bbs^d$ with $\theta\in[0, \pi]$ and $y=([y]_1, [y]_2, \cdots, [y]_d)\in\bbs^{d-1}\subset\bbr^d$ as follows:
\begin{align*}
&x=(\cos\theta, \sin\theta y)=(\cos\theta, \sin\theta [y]_1, \sin\theta [y]_2, \cdots, \sin\theta [y]_d).
\end{align*}
Then we have
\[
\|e-x\|^2=2-2\langle e, x\rangle=2(1-\cos\theta)=4\sin^2\frac{\theta}{2}.
\]
Since the volume form $\omega_d$ of $\bbs^d$ embedded in $\bbr^{d+1}$ can be expressed as
\begin{align}\label{D-0}
\omega_d=\sum_{j=1}^{d+1}(-1)^{j-1}[x]_jd[x]_1\wedge \cdots\wedge d[x]_{j-1}\wedge d[x]_{j+1}\wedge \cdots\wedge d[x]_{d+1},
\end{align}
if we substitute
\[
[x]_1=\cos\theta,\quad [x]_j=\sin\theta~ [y]_{j-1},\quad\forall~2\leq j\leq d+1,
\]
into \eqref{D-0}, we have
\begin{align*}
\omega_d&=[x]_1d[x]_2\wedge\cdots\wedge [x]_{d+1}-\sum_{j=1}^d (-1)^{j-1} [x]_{j}d[x]_1\wedge \cdots\wedge d[x]_j \wedge d[x]_{j+2}\wedge\cdots \wedge d[x]_{d+1}\\
&=\cos\theta \sin^{d}\theta d[y]_1\wedge \cdots\wedge d[y]_d-\sum_{j=1}^d\sin^{d-1}\theta [y]_jd\theta\wedge d[y]_1\wedge\cdots d[y]_{j-1}\wedge d[y]_{j+1}\wedge\cdots\wedge d[y]_d\\
&=0-\sin^{d-1}\theta d\theta\wedge \omega_{d-1}=-\sin^{d-1}\theta d\theta\wedge \omega_{d-1}.
\end{align*}
Here, we used $d[y]_1\wedge \cdots\wedge d[y]_d=0$, since $[y]_1d[y]_1+\cdots+[y]_dd[y]_d=0$ holds.
This implies that
\begin{align*}
d\sigma_x=(\sin\theta)^{d-1}d\sigma_y d\theta.
\end{align*}
Again, this yields
\begin{align*}
\int_{\bbs^d}\frac{1}{\|e-x\|^{2p}}d\sigma_{x}&=\int_0^\pi\int_{\bbs^{d-1}}\frac{1}{\left(4\sin^2\frac{\theta}{2}\right)^p}(\sin\theta)^{d-1}d\sigma_yd\theta\\
&=|\bbs^{d-1}|\int_0^\pi\frac{1}{\left(4\sin^2\frac{\theta}{2}\right)^p}(\sin\theta)^{d-1}d\theta\\
&=|\bbs^{d-1}|\int_0^\pi\frac{2^{d-1}\left(\sin\frac{\theta}{2}\right)^{d-1}\left(\cos\frac{\theta}{2}\right)^{d-1}}{\left(4\sin^2\frac{\theta}{2}\right)^p}d\theta\\
&=|\bbs^{d-1}|\int_0^\pi2^{d-2p-1}\left(\sin\frac{\theta}{2}\right)^{d-2p-1}\left(\cos\frac{\theta}{2}\right)^{d-1}d\theta.
\end{align*}
The last term of the above integral exists if and only if the following limit
\begin{align}\label{D-0-1}
\int_0^\delta \theta^{d-2p-1}d\theta
\end{align}
exists for some $\delta\in(0, \pi)$. From the simple calculus, we know that a limit \eqref{D-0-1} exists if and only if 
\[
d-2p>0.
\]
This yields, if
\[
0\leq p< \frac{d}{2},
\]
then $\mathcal{H}_p[\rho_{\mathrm{uni}}]$ exists. By the symmetry $\mathcal{H}_{-p}=\mathcal{H}_{p}$, $\mathcal{H}_{p}[\rho_{\mathrm{uni}}]$ exists if $p\in\left(-\frac{d}{2},\frac{d}{2}\right)$.
\end{proof}

From Theorem \ref{Thm4.1}, we obtain the sufficient condition for $p$ which guarantees the existence of $\mathcal{H}_p[\rho_{\mathrm{uni}}]$. Now, we study a necessary condition part. For convenience, we define
\[
B_r(x):=\{y\in\bbs^d: \|x-y\|<r\}
\]
for any $r>0$ and $x\in\bbs^d$.

\begin{theorem}\label{Thm4.2}
Let $p$ be a real number and $d$ be the dimension of the domain $\bbs^d$. If $\mathcal{H}_p[\rho_{\mathrm{uni}}]$ exist, then $p$ and $d$ satisfy
\[
p\in \left(-\frac{d}{2},\frac{d}{2}\right).
\]
\end{theorem}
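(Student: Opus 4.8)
The plan is to prove the contrapositive: assuming $p\notin\left(-\frac{d}{2},\frac{d}{2}\right)$, I will show that $\mathcal{H}_p[\rho_{\mathrm{uni}}]=\infty$. By the symmetry $\mathcal{H}_{-p}=\mathcal{H}_{p}$ from Proposition \ref{Pro3.1}(2), it suffices to treat $p\geq\frac{d}{2}$ (so in particular $p>0$) and to produce a lower bound on the defining integral that diverges. The mechanism is that the cross-ratio $\mathcal{C}(x_1,x_2,x_3,x_4)=\frac{\|x_1-x_2\|^2\|x_3-x_4\|^2}{\|x_1-x_3\|^2\|x_2-x_4\|^2}$ develops a nonintegrable singularity of order $\|x_1-x_3\|^{-2p}$ along the diagonal $x_1=x_3$ precisely when $2p\geq d$, which is the same threshold computed in Theorem \ref{Thm4.1}, now read in the divergent direction.

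First I would isolate this singularity by restricting the domain of integration to a product of four fixed balls. Fix three pairwise-distinct points $a,b,c\in\bbs^d$ and a radius $\delta>0$ small enough that $B_\delta(a)$, $B_\delta(b)$, $B_\delta(c)$ are pairwise separated, and restrict $x_1,x_3\in B_\delta(a)$, $x_2\in B_\delta(b)$, $x_4\in B_\delta(c)$. Since $\mathcal{H}_p[\rho_{\mathrm{uni}}]$ integrates a nonnegative integrand, discarding the rest of $(\bbs^d)^4$ only decreases it, so this gives a genuine lower bound. On the chosen region the three ``regular'' factors $\|x_1-x_2\|$, $\|x_3-x_4\|$, $\|x_2-x_4\|$ are all at most $2$ and at least a positive constant depending only on $\delta,a,b,c$; because $p>0$, the corresponding part of the integrand $\big(\|x_1-x_2\|^2\|x_3-x_4\|^2/\|x_2-x_4\|^2\big)^p$ is bounded below by a positive constant $c_0$. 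Pulling out $c_0$ and the factors $|B_\delta(b)|\,|B_\delta(c)|$ obtained by integrating the frozen variables $x_2,x_4$, the problem reduces to showing
\[
\int_{B_\delta(a)}\int_{B_\delta(a)}\frac{1}{\|x_1-x_3\|^{2p}}\,d\sigma_{1}d\sigma_{3}=\infty.
\]

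To evaluate this I would fix $x_3$ in the concentric half-ball $B_{\delta/2}(a)$, so that $B_{\delta/2}(x_3)\subset B_\delta(a)$, and bound the inner integral below by $\int_{B_{\delta/2}(x_3)}\|x_1-x_3\|^{-2p}\,d\sigma_1$. This is exactly the local cap integral analyzed in the proof of Theorem \ref{Thm4.1}: by rotation invariance, centering the parametrization $x=(\cos\theta,\sin\theta\,y)$ at $x_3$ gives $d\sigma_1=(\sin\theta)^{d-1}d\sigma_y\,d\theta$ and $\|x_1-x_3\|^{2p}=(4\sin^2\frac{\theta}{2})^p$, so the inner integral is a positive multiple of $\int_0^{\theta_0}\big(\sin\frac{\theta}{2}\big)^{d-2p-1}\big(\cos\frac{\theta}{2}\big)^{d-1}d\theta$ for the $\theta_0>0$ corresponding to cap radius $\delta/2$. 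Near $\theta=0$ the integrand behaves like $\theta^{d-2p-1}$, and $\int_0^{\theta_0}\theta^{d-2p-1}\,d\theta=\infty$ exactly when $d-2p\leq 0$, i.e. $p\geq\frac{d}{2}$. Hence the inner integral is already infinite for every such $x_3$, the double integral diverges, and therefore $\mathcal{H}_p[\rho_{\mathrm{uni}}]=\infty$. By the symmetry $\mathcal{H}_{-p}=\mathcal{H}_{p}$ the same conclusion holds for $p\leq-\frac{d}{2}$, which establishes the contrapositive and hence the theorem.

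The separation estimates producing $c_0$ and the local volume computation are routine, both mirroring Theorem \ref{Thm4.1}. The one point that deserves genuine care is the reduction step: one must restrict to a region on which the three regular distances stay uniformly bounded away from zero while only the singular distance $\|x_1-x_3\|$ is allowed to vanish, so that the divergence is driven entirely by the $\|x_1-x_3\|^{-2p}$ factor and is neither cancelled nor swamped by the remaining factors of the cross-ratio. Freezing $x_2,x_4$ in separated balls is precisely what guarantees this cleanly.
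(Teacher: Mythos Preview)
Your proof is correct and follows essentially the same route as the paper: reduce by the symmetry $\mathcal{H}_{-p}=\mathcal{H}_p$ to $p\ge 0$, restrict the integration domain so that the numerator factors are bounded away from zero while a denominator factor can vanish, and then reduce to the cap integral $\int_{B_\delta(e)}\|x-e\|^{-2p}\,d\sigma$, whose divergence for $p\ge d/2$ is exactly the spherical-coordinate calculation from Theorem~\ref{Thm4.1}. The only cosmetic difference is in the restriction region: the paper places $(x_1,x_2,x_3,x_4)$ in two antipodal balls $B_\varepsilon(e)\times B_\varepsilon(-e)\times B_\varepsilon(-e)\times B_\varepsilon(e)$ so that \emph{both} denominator factors become singular (yielding a squared cap integral), whereas you use three separated balls and isolate only one singular factor, which is already enough.
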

\begin{proof}
Since $\mathcal{H}_p=\mathcal{H}_{-p}$, without loss of generality, we assume $p\geq0$. We define the neighborhood of $e$ and $-e$ as follows:
\[
\mathcal{S}_1=B_\varepsilon(e),\quad \mathcal{S}_2=B_\varepsilon(-e),
\]
where $e=(1, \underbrace{0, 0, \cdots, 0}_{d-\text{times}})\in\bbr^{d+1}$ and $0<\varepsilon<\sqrt{2}$. Then $\mathcal{S}_1$ and $\mathcal{S}_2$ are disjoint sets and the distance between $\mathcal{S}_1$ and $\mathcal{S}_2$ can be calculated as
\[
\mathrm{dist}(\mathcal{S}_1,\mathcal{S}_2)=\inf_{\substack{x\in\mathcal{S}_1,\\ y\in\mathcal{S}_2}}\mathrm{dist}(x, y)=2-\varepsilon^2.
\]
From this relation, we have the following calculation:
\begin{align*}
\mathcal{H}_p[\rho_{\mathrm{uni}}]&=\frac{1}{|\bbs^d|^4}\int_{(\bbs^d)^4}\left(\frac{\|x_1-x_2\|\cdot \|x_3-x_4\|}{\|x_2-x_3\|\cdot \|x_4-x_1\|}\right)^{2p}d\sigma_{1}d\sigma_{2}d\sigma_{3}d\sigma_{4}\\
&\geq\frac{1}{|\bbs^d|^4} \int_{\mathcal{S}_1}\int_{\mathcal{S}_2}\int_{\mathcal{S}_2}\int_{\mathcal{S}_1}\left(\frac{\|x_1-x_2\|\cdot \|x_3-x_4\|}{\|x_2-x_3\|\cdot \|x_4-x_1\|}\right)^{2p}d\sigma_{1}d\sigma_{2}d\sigma_{3}d\sigma_{4}\\
&\geq\frac{1}{|\bbs^d|^4}(2-\varepsilon^2)^2 \int_{\mathcal{S}_1}\int_{\mathcal{S}_2}\int_{\mathcal{S}_2}\int_{\mathcal{S}_1}\left(\frac{1}{\|x_2-x_3\|\cdot \|x_4-x_1\|}\right)^{2p}d\sigma_{1}d\sigma_{2}d\sigma_{3}d\sigma_{4}\\
&=\frac{1}{|\bbs^d|^4}(2-\varepsilon^2)^2\left( \int_{(\mathcal{S}_1)^2}\frac{1}{\|x-y\|^{2p}}d\sigma_{x}d\sigma_{y}\right)^2.
\end{align*}
In the last equality, we used the symmetry of $\mathcal{S}_1$ and $\mathcal{S}_2$. Since we assumed that $\mathcal{H}_p[\rho_{\mathrm{uni}}]$ exists, we know that a limit
\begin{align}\label{D-0-3}
\int_{(\mathcal{S}_1)^2}\frac{1}{\|x-y\|^{2p}}d\sigma_{x}d\sigma_{y}
\end{align}
exists. Now, for $\delta>0$, we define
\[
\mathcal{S}_1^\delta:=\{y:B_\delta(y)\subset \mathcal{S}_1\}.
\]
Then, for sufficiently small $\delta>0$, $\mathcal{S}_1^\delta$ is a non-empty set. This definition yields the following inequality:
\begin{align}\label{D-0-4}
\int_{(\mathcal{S}_1)^2}\frac{1}{\|x-y\|^{2p}}d\sigma_{x}d\sigma_{y}\geq \int_{\mathcal{S}_1}\left(\int_{\mathcal{S}_1^\delta}\frac{1}{\|x-y\|^{2p}}d\sigma_{x}\right)d\sigma_{y}.
\end{align}
We have the following calculation:
\begin{align}
\begin{aligned}\label{D-0-5}
\int_{\mathcal{S}_1^\delta}\frac{1}{\|x-y\|^{2p}}d\sigma_{x}&=\int_{B_\delta (y)}\frac{1}{\|x-y\|^{2p}}d\sigma_{x}+\int_{\mathcal{S}_1^\delta\cap (B_\delta(y))^c}\frac{1}{\|x-y\|^{2p}}d\sigma_{x}\\
&=\int_{B_\delta (e)}\frac{1}{\|x-e\|^{2p}}d\sigma_{x}
+\int_{\mathcal{S}_1^\delta\cap (B_\delta(y))^c}\frac{1}{\|x-y\|^{2p}}d\sigma_{x}\\
&\geq\int_{B_\delta (e)}\frac{1}{\|x-e\|^{2p}}d\sigma_{x}.
\end{aligned}
\end{align}
Here, we used the symmetry of $\bbs^d$ in the second equality. Finally, we combine relations \eqref{D-0-3}, \eqref{D-0-4}, and \eqref{D-0-5} to get 
\begin{align}\label{D-0-6}
\int_{B_\delta (e)}\frac{1}{\|x-e\|^{2p}}d\sigma_{x}<\infty.
\end{align}
By a similar argument that we used in the proof of Theorem \ref{Thm4.1}, we have
%
\begin{align*}
\int_{B_\delta (e)}\frac{1}{\|x-e\|^{2p}}d\sigma_{x}&=\int_0^{2\sin^{-1}\left(\frac{\delta}{2}\right)}\int_{\bbs^{d-1}}\frac{1}{\left(4\sin^2\frac{\theta}{2}\right)^p}(\sin\theta)^{d-1}d\sigma_yd\theta\\
&=|\bbs^{d-1}|\int_0^{2\sin^{-1}\left(\frac{\delta}{2}\right)}2^{d-2p-1}\left(\sin\frac{\theta}{2}\right)^{d-2p-1}\left(\cos\frac{\theta}{2}\right)^{d-1}d\theta
\end{align*}
and the last integral exists if and only if $0\leq p<\frac{d}{2}$. So we know that \eqref{D-0-6} implies 
\[
0\leq p<\frac{d}{2}.
\]
By the symmetry of $\mathcal{H}_p$ (Proposition \ref{Pro3.1}), we have 
\[
p\in\left(-\frac{d}{2},\frac{d}{2}\right).
\]
\end{proof}

\begin{remark}\label{Rem4.1}
From Theorems \ref{Thm4.1} and \ref{Thm4.2}, we know that
\[
p\in\left(-\frac{d}{2},\frac{d}{2}\right)\quad\Longleftrightarrow\quad \mathcal{H}_p[\rho_{\mathrm{uni}}]\quad\text{exists}.
\]
\end{remark}

So far, we only studied the existence of $\mathcal{H}_p[\rho_{\mathrm{uni}}]$. From now on, we find an equivalent condition for $p$ to the existence of $\mathcal{H}_p[\rho]$ for general probability density function $\rho\in\mathcal{P}(\bbs^d)$.

\begin{lemma}\label{Lem4.1}
Let $p$ be a real number and $d$ be the dimension of the domain $\bbs^d$. If $\mathcal{H}_p[\rho_{\mathrm{uni}}]$ exists, then $\mathcal{H}_p[\rho]$ exists for all $\rho\in\mathcal{P}(\bbs^d)$.
\end{lemma}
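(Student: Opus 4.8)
The plan is to exploit two facts: the integrand $\mathcal{C}(x_1,x_2,x_3,x_4)^p$ is nonnegative, and a continuous probability density on the compact sphere is uniformly bounded. Together these let me dominate the general case by a constant multiple of the uniform case, so finiteness of $\mathcal{H}_p[\rho_{\mathrm{uni}}]$ transfers to $\mathcal{H}_p[\rho]$.

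First I would record that the cross-ratio kernel $\mathcal{C}(x_1,x_2,x_3,x_4)=\frac{\|x_1-x_2\|^2\|x_3-x_4\|^2}{\|x_1-x_3\|^2\|x_2-x_4\|^2}$ is strictly positive whenever the four points are pairwise distinct, so $\mathcal{C}^p\geq 0$ for every real $p$. Hence the integrals defining both $\mathcal{H}_p[\rho]$ and $\mathcal{H}_p[\rho_{\mathrm{uni}}]$ are integrals of nonnegative functions, and \emph{existence} is simply finiteness. Unwinding $\rho_{\mathrm{uni}}\equiv 1/|\bbs^d|$ gives the identity
\[
\int_{(\bbs^d)^4}\mathcal{C}(x_1,x_2,x_3,x_4)^p\,d\sigma_1 d\sigma_2 d\sigma_3 d\sigma_4 = |\bbs^d|^4\,\mathcal{H}_p[\rho_{\mathrm{uni}}],
\]
so the hypothesis that $\mathcal{H}_p[\rho_{\mathrm{uni}}]$ exists says exactly that the pure kernel $\mathcal{C}^p$ is integrable over $(\bbs^d)^4$ against the surface measure.

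Next I would use that $\rho\in\mathcal{P}(\bbs^d)$ is continuous on the compact manifold $\bbs^d$, hence attains a finite maximum $M:=\max_{x\in\bbs^d}\rho(x)<\infty$. Then $\rho(x_1)\rho(x_2)\rho(x_3)\rho(x_4)\leq M^4$ pointwise, and since $\mathcal{C}^p\geq 0$ the integrand of $\mathcal{H}_p[\rho]$ is dominated by $M^4\,\mathcal{C}^p$. Integrating this inequality and invoking the identity above yields
\[
\mathcal{H}_p[\rho]\leq M^4\int_{(\bbs^d)^4}\mathcal{C}(x_1,x_2,x_3,x_4)^p\,d\sigma_1 d\sigma_2 d\sigma_3 d\sigma_4 = M^4\,|\bbs^d|^4\,\mathcal{H}_p[\rho_{\mathrm{uni}}]<\infty,
\]
which is the claim.

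There is no serious obstacle here: the argument is a one-line domination by the uniform density. The only points worth a word of care are that the integrand is genuinely nonnegative, so the pointwise bound transfers to the integral with no sign issues, and that continuity of $\rho$ on the compact sphere really does produce a finite supremum; both are immediate from the standing definitions. I note in passing that the same estimate with $M=\|\rho\|_{L^\infty}$ would handle merely bounded densities verbatim, but the continuity assumed in $\mathcal{P}(\bbs^d)$ already suffices.
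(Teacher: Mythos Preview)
Your argument is correct and matches the paper's own proof essentially line for line: both use continuity of $\rho$ on the compact sphere to obtain $M=\max_{x\in\bbs^d}\rho(x)$, dominate the integrand by $M^4\mathcal{C}^p$, and rewrite the resulting bound as $(M|\bbs^d|)^4\mathcal{H}_p[\rho_{\mathrm{uni}}]$. Your added remarks on nonnegativity of $\mathcal{C}^p$ and on the $L^\infty$ extension are sound but not needed beyond what the paper already assumes.
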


\begin{proof}
Since $\rho$ is a continuous function defined on the compact domain $\bbs^d$, we can define $M=\max_{x\in\bbs^d}\rho(x)$. From the following inequality:
\begin{align*}
\mathcal{H}_p[\rho]
&=\int_{(\bbs^d)^4}\left(\frac{\|x_1-x_2\|\cdot \|x_3-x_4\|}{\|x_2-x_3\|\cdot \|x_4-x_1\|}\right)^{2p}\rho(x_1)\rho(x_2)\rho(x_3)\rho(x_4)d\sigma_{1}d\sigma_{2}d\sigma_{3}d\sigma_{4}\\
&\leq\int_{(\bbs^d)^4}\left(\frac{\|x_1-x_2\|\cdot \|x_3-x_4\|}{\|x_2-x_3\|\cdot \|x_4-x_1\|}\right)^{2p}M^4d\sigma_{1}d\sigma_{2}d\sigma_{3}d\sigma_{4}\\
&\leq \left(M|\bbs^d|\right)^4\int_{(\bbs^d)^4}\left(\frac{\|x_1-x_2\|\cdot \|x_3-x_4\|}{\|x_2-x_3\|\cdot \|x_4-x_1\|}\right)^{2p}\frac{1}{|\bbs^d|^4}d\sigma_{1}d\sigma_{2}d\sigma_{3}d\sigma_{4}\\
&= \left(M|\bbs^d|\right)^4\mathcal{H}_p[\rho_{\mathrm{uni}}].
\end{align*}
So $\mathcal{H}_p[\rho]$ exists, because we assumed that $\mathcal{H}_p[\rho_{\mathrm{uni}}]$ exists.
\end{proof}

\begin{corollary}
Let $p$ be a real number and $d$ be the dimension of the domain $\bbs^d$. If $p$ and $d$ satisfy
\[
p\in\left(-\frac{d}{2},\frac{d}{2}\right),
\]
then $\mathcal{H}_p[\rho]$ exists for all $\rho\in\mathcal{P}(\bbs^d)$.
\end{corollary}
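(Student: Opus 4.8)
The plan is simply to compose the two implications established immediately above, following the left-to-right half of the flow chart $(\mathcal{S}1)\Rightarrow(\mathcal{S}2)\Rightarrow(\mathcal{S}3)$. The hypothesis of the corollary is exactly $(\mathcal{S}1)$, namely $p\in\left(-\frac{d}{2},\frac{d}{2}\right)$, so Theorem \ref{Thm4.1} applies verbatim and yields that $\mathcal{H}_p[\rho_{\mathrm{uni}}]$ exists, which is statement $(\mathcal{S}2)$. Feeding $(\mathcal{S}2)$ into Lemma \ref{Lem4.1} then gives that $\mathcal{H}_p[\rho]$ exists for every $\rho\in\mathcal{P}(\bbs^d)$, which is precisely the desired conclusion $(\mathcal{S}3)$. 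The only verification needed is that the interval appearing in the hypothesis coincides with the interval in the hypothesis of Theorem \ref{Thm4.1}, which it does.

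For orientation, all the substantive work has already been carried out in the two cited results, and none of it needs to be repeated. Theorem \ref{Thm4.1} contains the only genuine computation: it reduces finiteness of $\mathcal{H}_p[\rho_{\mathrm{uni}}]$, via the crude bound $\|x_1-x_2\|\le 2$ together with the rotation symmetry of $\bbs^d$, to convergence of a one-dimensional integral of the form $\int_0^\delta \theta^{d-2p-1}\,d\theta$, which is finite if and only if $d-2p>0$. Lemma \ref{Lem4.1} then removes the restriction to the uniform measure by the elementary domination $\rho\le M:=\max_{x\in\bbs^d}\rho(x)$, which is legitimate because $\rho$ is continuous on the compact sphere $\bbs^d$, giving the clean bound $\mathcal{H}_p[\rho]\le \left(M|\bbs^d|\right)^4\,\mathcal{H}_p[\rho_{\mathrm{uni}}]$.

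Consequently I expect no real obstacle here: the corollary is a one-line consequence of chaining Theorem \ref{Thm4.1} with Lemma \ref{Lem4.1}, and the proof can be written in a single sentence invoking both. The genuinely hard estimates have been isolated upstream, so the corollary serves mainly to record the combined statement in the form most useful for the applications in Section \ref{sec:5}.
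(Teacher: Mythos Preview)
Your proposal is correct and follows essentially the same approach as the paper: chain the implication $(\mathcal{S}1)\Rightarrow(\mathcal{S}2)$ from Theorem~\ref{Thm4.1} with $(\mathcal{S}2)\Rightarrow(\mathcal{S}3)$ from Lemma~\ref{Lem4.1}. In fact, the paper's own proof contains a citation slip (it invokes Theorem~\ref{Thm4.2}, which is the converse direction), so your reference to Theorem~\ref{Thm4.1} is the right one.
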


\begin{proof}
From Theorem \ref{Thm4.2} and the given condition $|p|<\frac{d}{2}$ imply the existence of $\mathcal{H}_p[\rho_{\mathrm{uni}}]$. Again, Lemma \ref{Lem4.1} and existence of $\mathcal{H}_p[\rho_{\mathrm{uni}}]$ yield the existence of $\mathcal{H}_p[\rho]$.
\end{proof}

\begin{theorem}\label{Thm4.3}
Let $p$ be a real number. If there exists a probability density function $\rho\in\mathcal{P}(\bbs^d)$ which makes $\mathcal{H}_p[\rho]$ exists, then $\mathcal{H}_p[\rho_{\mathrm{uni}}]$ exists.
\end{theorem}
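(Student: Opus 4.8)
The plan is to prove the slightly stronger statement that the existence of $\mathcal{H}_p[\rho]$ for a single $\rho\in\mathcal{P}(\bbs^d)$ already forces $p\in\left(-\frac d2,\frac d2\right)$, and then to invoke Theorem \ref{Thm4.1} (equivalently Remark \ref{Rem4.1}) to conclude that $\mathcal{H}_p[\rho_{\mathrm{uni}}]$ exists. By the symmetry $\mathcal{H}_{-p}=\mathcal{H}_p$ of Proposition \ref{Pro3.1}, I may assume $p\geq 0$; the case $p=0$ is trivial since $\mathcal{H}_0\equiv 1$, so I take $p>0$.

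The central difficulty, relative to Theorem \ref{Thm4.2}, is that $\rho$ may vanish on most of $\bbs^d$, so there is no direct comparison between $\mathcal{H}_p[\rho]$ and $\mathcal{H}_p[\rho_{\mathrm{uni}}]$. The remedy is to localize where $\rho$ is nondegenerate. Since $\rho$ is continuous with $\int_{\bbs^d}\rho\,d\sigma=1$, there is a point $x_0$ with $\rho(x_0)>0$, and hence a ball $B=B_r(x_0)$ on which $\rho\geq c>0$. Inside $B$ I choose two disjoint balls $\mathcal{S}_1,\mathcal{S}_2\subset B$ with positive separation $\eta:=\mathrm{dist}(\mathcal{S}_1,\mathcal{S}_2)>0$. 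The key observation is that the denominators $\|x_2-x_3\|$ and $\|x_4-x_1\|$ of the cross ratio can still degenerate \emph{within} a single sub-ball, so the diagonal singularity that drives the whole analysis survives even though $\rho$ is supported only near $x_0$.

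With the sub-balls in hand, I restrict the finite integral defining $\mathcal{H}_p[\rho]$ to $x_1,x_4\in\mathcal{S}_1$ and $x_2,x_3\in\mathcal{S}_2$. On this region $\rho\geq c$, while the numerator factors obey $\|x_1-x_2\|,\|x_3-x_4\|\geq\eta$, exactly mirroring the setup in the proof of Theorem \ref{Thm4.2}. Using the congruence of $\mathcal{S}_1$ and $\mathcal{S}_2$ to factor the integral, this yields
\[
\infty>\mathcal{H}_p[\rho]\geq c^4\,\eta^{4p}\left(\int_{(\mathcal{S}_1)^2}\frac{1}{\|x-y\|^{2p}}\,d\sigma_x\,d\sigma_y\right)^2,
\]
so the local singular integral $\int_{(\mathcal{S}_1)^2}\|x-y\|^{-2p}\,d\sigma_x\,d\sigma_y$ is finite. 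From here I reproduce verbatim the closing steps of the proof of Theorem \ref{Thm4.2}: passing to the inner set $\mathcal{S}_1^\delta=\{y:B_\delta(y)\subset\mathcal{S}_1\}$ and using the rotational symmetry of $\bbs^d$ to bound the integral below by $\int_{B_\delta(e)}\|x-e\|^{-2p}\,d\sigma_x$, whose finiteness (via the parametrization $d\sigma_x=(\sin\theta)^{d-1}\,d\sigma_y\,d\theta$) is equivalent to the integrability of $\theta^{d-2p-1}$ near $\theta=0$, i.e. to $d-2p>0$.

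Thus $0\le p<\frac d2$, hence $p\in\left(-\frac d2,\frac d2\right)$, and Theorem \ref{Thm4.1} delivers the existence of $\mathcal{H}_p[\rho_{\mathrm{uni}}]$. The only genuinely new ingredient beyond Theorem \ref{Thm4.2} is the localization step, so I expect producing the two separated sub-balls on which $\rho$ is bounded below — and checking that the separation $\eta$ remains strictly positive after the restriction — to be the main (though mild) obstacle; once this is in place, the singularity analysis is identical to the one already carried out.
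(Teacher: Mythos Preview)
Your proposal is correct and follows essentially the same approach as the paper's proof: the paper likewise localizes to a ball $B_\varepsilon(x_M)$ on which $\rho$ is bounded below (using the maximum point $x_M$ rather than an arbitrary positivity point, but this is immaterial), picks two separated sub-balls $B_{\tilde\varepsilon}(y_1),B_{\tilde\varepsilon}(y_2)$ inside it, restricts the integral to $x_1,x_4\in B_{\tilde\varepsilon}(y_1)$ and $x_2,x_3\in B_{\tilde\varepsilon}(y_2)$, and then reduces to the finiteness of $\int_{B_\delta(e)}\|x-e\|^{-2p}\,d\sigma_x$ exactly as in Theorem~\ref{Thm4.2}, concluding via Theorem~\ref{Thm4.1}. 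The only cosmetic difference is that the paper carries the inner-ball step $\mathcal{S}_1^\delta$ explicitly once more rather than citing Theorem~\ref{Thm4.2}; your appeal to that argument is equally valid.
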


\begin{proof}
We assume that there exists $\rho\in\mathcal{P}(\bbs^d)$ which satisfies $\mathcal{H}_p[\rho]<\infty$. Without loss of generality, we assume that $p\geq0$. Since $\bbs^d$ is compact and $\rho$ is continuous, there exist $x_M\in\bbs^d$ satisfies
\[
\rho(x_M)=\max_{x\in\bbs^d}\rho(x).
\]
Since $\max_{x\in\bbs^d}\rho(x)$ is a positive real number, we can define the following non-empty closed subset of $\bbs^d$ as follows:
\[
\mathcal{S}:=\left\{x\in\bbs^d: \rho(x)\geq \frac{1}{2}\max_{x\in\bbs^d}\rho(x)\right\}.
\]
Then there exists $\varepsilon>0$ such that 
\[
B_\varepsilon(x_M)\subset \mathcal{S},
\]
since $\rho$ is a continuous function. Under this setting, we have
\begin{align*}
\infty>&\int_{(\bbs^d)^4}\left(\frac{\|x_1-x_2\|\cdot \|x_3-x_4\|}{\|x_2-x_3\|\cdot \|x_4-x_1\|}\right)^{2p}\rho(x_1)\rho(x_2)\rho(x_3)\rho(x_4)d\sigma_{1}d\sigma_{2}d\sigma_{3}d\sigma_{4}\\
\geq&\int_{B_\varepsilon(x_M)^4}\left(\frac{\|x_1-x_2\|\cdot \|x_3-x_4\|}{\|x_2-x_3\|\cdot \|x_4-x_1\|}\right)^{2p}\rho(x_1)\rho(x_2)\rho(x_3)\rho(x_4)d\sigma_{1}d\sigma_{2}d\sigma_{3}d\sigma_{4}\\
\geq&\left(\frac{1}{2}\rho(x_M)\right)^4\int_{B_\varepsilon(x_M)^4}\left(\frac{\|x_1-x_2\|\cdot \|x_3-x_4\|}{\|x_2-x_3\|\cdot \|x_4-x_1\|}\right)^{2p}d\sigma_{1}d\sigma_{2}d\sigma_{3}d\sigma_{4}.
\end{align*}
For sufficiently small $\tilde{\varepsilon}>0$, we know that there exists two points $y_1$ and $y_2$ such that
\[
y_1, y_2\in B_\varepsilon(x_M),\quad B_{2\tilde{\varepsilon}}(y_1),B_{2\tilde{\varepsilon}}(y_2)\subset B_\varepsilon(x_M),\quad B_{2\tilde{\varepsilon}}(y_1)\cap B_{2\tilde{\varepsilon}}(y_2)=\emptyset.
\]
Then we can easily obtain that
\begin{align}\label{D-2}
\mathrm{dist}(B_{\tilde{\varepsilon}}(y_1), B_{\tilde{\varepsilon}}(y_2))\geq 2\tilde{\varepsilon},
\end{align}
and
\begin{align}\label{D-3}
B_{\tilde{\varepsilon}}(y_1)\times B_{\tilde{\varepsilon}}(y_2)\times B_{\tilde{\varepsilon}}(y_2)\times B_{\tilde{\varepsilon}}(y_1)\subset B_{\varepsilon}(x_M)^4.
\end{align}
From two relations \eqref{D-2} and \eqref{D-3}, we get
\begin{align*}
&\int_{B_\varepsilon(x_M)^4}\left(\frac{\|x_1-x_2\|\cdot \|x_3-x_4\|}{\|x_2-x_3\|\cdot \|x_4-x_1\|}\right)^{2p}d\sigma_{1}d\sigma_{2}d\sigma_{3}d\sigma_{4}\\
\geq&\int_{B_{\tilde{\varepsilon}}(y_1)}\int_{B_{\tilde{\varepsilon}}(y_2)}\int_{B_{\tilde{\varepsilon}}(y_2)}\int_{B_{\tilde{\varepsilon}}(y_1)}\left(\frac{\|x_1-x_2\|\cdot \|x_3-x_4\|}{\|x_2-x_3\|\cdot \|x_4-x_1\|}\right)^{2p}d\sigma_{1}d\sigma_{2}d\sigma_{3}d\sigma_{4}\\
\geq&\int_{B_{\tilde{\varepsilon}}(y_1)}\int_{B_{\tilde{\varepsilon}}(y_2)}\int_{B_{\tilde{\varepsilon}}(y_2)}\int_{B_{\tilde{\varepsilon}}(y_1)}\left(\frac{(2\tilde{\varepsilon})^2}{\|x_2-x_3\|\cdot \|x_4-x_1\|}\right)^{2p}d\sigma_{1}d\sigma_{2}d\sigma_{3}d\sigma_{4}\\
=&\left(\int_{B_{\tilde{\varepsilon}}(y_2)}\int_{B_{\tilde{\varepsilon}}(y_2)}\left(\frac{2\tilde{\varepsilon}}{\|x_2-x_3\|}\right)^{2p}d\sigma_{2}d\sigma_{3}\right)^2.
\end{align*}
The above calculation yields
\[
\int_{B_{\tilde{\varepsilon}}(y_2)}\int_{B_{\tilde{\varepsilon}}(y_2)}\frac{1}{\|x_2-x_3\|^{2p}}d\sigma_{2}d\sigma_{3}<\infty.
\]
Since
\[
\bigcup_{x_3\in B_{\tilde{\varepsilon}/2}(y_2)}\Big(\{x_3\}\times B_{\tilde{\varepsilon}/2}(x_3)\Big)\subset B_{\tilde{\varepsilon}}(y_2)\times B_{\tilde{\varepsilon}}(y_2),
\]
we have
\begin{align*}
&\int_{B_{\tilde{\varepsilon}}(y_2)}\int_{B_{\tilde{\varepsilon}}(y_2)}\frac{1}{\|x_2-x_3\|^{2p}}d\sigma_{2}d\sigma_{3}\geq \int_{B_{\tilde{\varepsilon}/2}(y_2)}\int_{B_{\tilde{\varepsilon}/2}(x_3)}\frac{1}{\|x_2-x_3\|^{2p}}d\sigma_{2}d\sigma_{3}\\
=& |B_{\tilde{\varepsilon}/2}(y_2)|\int_{B_{\tilde{\varepsilon}/2}(x_3)}\frac{1}{\|x_2-x_3\|^{2p}}d\sigma_{2}
=|B_{\tilde{\varepsilon}/2}(y_2)|\int_{B_{\tilde{\varepsilon}/2}(e)}\frac{1}{\|x-e\|^{2p}}d\sigma_{x}.
\end{align*}
Finally, we can obtain that
\[
\int_{B_{\tilde{\varepsilon}/2}(e)}\frac{1}{\|x-e\|^{2p}}d\sigma_{x}<\infty,
\]
and from a similar argument that we used in Theorem \ref{Thm4.2}, we can conclude that $0\leq p<\frac{d}{2}$. By Theorem \ref{Thm4.1}, we know that $\mathcal{H}_p[\rho_{\mathrm{uni}}]<\infty$ and it exists. So we have the desired result.
\end{proof}

\begin{remark}\label{Rem4.2}
Let a probability density function $\rho\in\mathcal{P}(\bbs^d)$ be given. Then $\mathcal{H}_p[\rho]<\infty$ if and only if $p\in\left(-\frac{d}{2},\frac{d}{2}\right)$.
\end{remark}

\begin{corollary}
Let $\mu$ be a finite measure defined on $\left[-\frac{d}{2}+\delta, \frac{d}{2}-\delta\right]$ for some $\delta\in\left(0, \frac{d}{2}\right)$. Then the following functional
\[
\mathcal{F}[\rho]=\int_{-d/2+\delta}^{d/2-\delta}\mathcal{H}_p[\rho] d\mu(p)
\]
is also a constant of motion functional of system \eqref{C-(1)}.
\end{corollary}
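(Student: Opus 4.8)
The plan is to reduce everything to the pointwise-in-$p$ statements already proved, so that the only genuinely new work is checking that the $p$-integral defining $\mathcal{F}$ converges. First I would fix $\rho\in\mathcal{P}(\bbs^d)$ and set $I:=\left[-\frac{d}{2}+\delta, \frac{d}{2}-\delta\right]$. Every $p\in I$ satisfies $p\in\left(-\frac{d}{2},\frac{d}{2}\right)$, so by Remark \ref{Rem4.2} the functional $\mathcal{H}_p[\rho]$ is finite for each such $p$; in particular the two endpoint values $\mathcal{H}_{-d/2+\delta}[\rho]$ and $\mathcal{H}_{d/2-\delta}[\rho]$ are finite. This is the reason the interval of integration is taken strictly inside $\left(-\frac{d}{2},\frac{d}{2}\right)$.

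The crux of the argument, and the only delicate point, is to show that $\mathcal{F}[\rho]$ is well defined: one must bound $\mathcal{H}_p[\rho]$ uniformly in $p\in I$ and ensure that $p\mapsto\mathcal{H}_p[\rho]$ is measurable. Both follow from a single elementary observation. For a fixed positive number $c=\mathcal{C}(x_1,x_2,x_3,x_4)$ the map $p\mapsto c^p$ is monotone, so for every $p\in I$,
\[
c^p\leq\max\left(c^{-d/2+\delta},c^{d/2-\delta}\right)\leq c^{-d/2+\delta}+c^{d/2-\delta}.
\]
Multiplying by $\rho(x_1)\rho(x_2)\rho(x_3)\rho(x_4)$ and integrating over $(\bbs^d)^4$ gives the $p$-independent bound
\[
\mathcal{H}_p[\rho]\leq\mathcal{H}_{-d/2+\delta}[\rho]+\mathcal{H}_{d/2-\delta}[\rho]=:C_\rho<\infty,\qquad\forall~p\in I.
\]
The same right-hand side is an integrable majorant not depending on $p$, so the dominated convergence theorem shows that $p\mapsto\mathcal{H}_p[\rho]$ is continuous, hence Borel measurable, on $I$. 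Since $\mu$ is a finite measure, I then obtain $\mathcal{F}[\rho]\leq C_\rho\,\mu(I)<\infty$, which establishes that $\mathcal{F}$ is a genuine finite-valued functional on $\mathcal{P}(\bbs^d)$.

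The constant-of-motion property is now immediate and requires no return to the transport equation. Let $\rho_t$ be a solution of \eqref{C-(1)}. For each $p\in I\subset\left(-\frac{d}{2},\frac{d}{2}\right)$ the functional $\mathcal{H}_p$ exists, so Theorem \ref{Thm3.2} yields $\mathcal{H}_p[\rho_t]=\mathcal{H}_p[\rho_0]$ for all $t\geq0$. Integrating this identity against $d\mu(p)$ over $I$ gives
\[
\mathcal{F}[\rho_t]=\int_{-d/2+\delta}^{d/2-\delta}\mathcal{H}_p[\rho_t]\,d\mu(p)=\int_{-d/2+\delta}^{d/2-\delta}\mathcal{H}_p[\rho_0]\,d\mu(p)=\mathcal{F}[\rho_0],
\]
so $\mathcal{F}$ is a constant of motion functional for system \eqref{C-(1)}. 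Thus the entire proof rests on the monotonicity-domination estimate of the second paragraph, which simultaneously supplies the uniform bound and the integrable majorant; once that is in place the conclusion is a one-line consequence of Theorem \ref{Thm3.2} and the finiteness of $\mu$.
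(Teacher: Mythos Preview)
Your proof is correct and follows the same approach as the paper: each $\mathcal{H}_p$ with $p\in I$ is a constant of motion by Theorem \ref{Thm3.2}, and $\mathcal{F}$ is an integral (``linear combination'') of these, hence conserved. The paper's own argument is a single sentence and does not address the well-definedness of $\mathcal{F}[\rho]$ at all; your monotonicity-domination estimate showing $\mathcal{H}_p[\rho]\le \mathcal{H}_{-d/2+\delta}[\rho]+\mathcal{H}_{d/2-\delta}[\rho]$ uniformly in $p\in I$, together with the continuity/measurability check, is a genuine improvement in rigor over what the paper provides.
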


\begin{proof}
Since $\mathcal{H}_p$ is a constant of motion functional for all $p\in\left[-\frac{d}{2}+\delta, \frac{d}{2}-\delta\right]$ and $\mathcal{F}$ is a kind of linear combination of them, $\mathcal{F}$ is a constant of motion functional of system \eqref{C-(1)}.
\end{proof}

From a similar argument in Theorem \ref{Thm4.3}, we have the following theorem.
\begin{theorem}
Let a probability density function $\rho$ on $\bbs^d$ and an integer $k\geq2$ be given. Then $\mathcal{H}_{p, k}[\rho]<\infty$ if and only if $p\in\left(-\frac{d}{2},\frac{d}{2}\right)$.
\end{theorem}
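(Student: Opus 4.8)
The plan is to prove both implications after reducing to the case $p\ge 0$, which is permitted since $\mathcal{H}_{-p,k}=\mathcal{H}_{p,k}$ by Proposition \ref{Pro3.1}(2). The structural observation driving everything is that the $k$ denominator factors of $\mathcal{C}_k$ pair the variables as $(x_2,x_3),(x_4,x_5),\dots,(x_{2k},x_1)$, and these pairs use each of the $2k$ indices exactly once; that is, the denominators form a perfect matching of $\{1,\dots,2k\}$. Likewise the numerators $(x_1,x_2),(x_3,x_4),\dots,(x_{2k-1},x_{2k})$ form a second perfect matching. This is what lets every relevant multiple integral factor through Fubini into a product of the single two-point integral $\int_{(\bbs^d)^2}\|x-y\|^{-2p}\,d\sigma_x d\sigma_y$ studied in Theorem \ref{Thm4.1}.

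For the sufficiency direction, assume $0\le p<\frac d2$. Using $\|x_{2\ell-1}-x_{2\ell}\|^2\le 4$ to bound each numerator from above and $\rho\le M:=\max_{\bbs^d}\rho<\infty$ (as in Lemma \ref{Lem4.1}) to bound the density, I would estimate
\[
\mathcal{H}_{p,k}[\rho]\le 4^{kp}M^{2k}\int_{(\bbs^d)^{2k}}\prod_{\ell=1}^k\frac{1}{\|x_{2\ell}-x_{2\ell+1}\|^{2p}}\,d\sigma_1\cdots d\sigma_{2k}.
\]
Since the denominator pairs form a perfect matching, each variable appears in exactly one factor, so Fubini factors the right-hand side as
\[
4^{kp}M^{2k}\left(\int_{(\bbs^d)^2}\frac{1}{\|x-y\|^{2p}}\,d\sigma_x d\sigma_y\right)^{\!k},
\]
and the inner integral is finite precisely when $0\le p<\frac d2$ by the computation in the proof of Theorem \ref{Thm4.1}. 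Hence $\mathcal{H}_{p,k}[\rho]<\infty$.

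For the necessity direction I would mirror Theorem \ref{Thm4.3}. Assuming $\mathcal{H}_{p,k}[\rho]<\infty$ with $p\ge0$, first localize near a maximizer $x_M$ of $\rho$: on a small ball $B_\varepsilon(x_M)$ contained in $\{\rho\ge\frac12\rho(x_M)\}$ one bounds the density below, so that $\big(\tfrac12\rho(x_M)\big)^{2k}\int_{B_\varepsilon(x_M)^{2k}}\mathcal{C}_k^{\,p}\,d\sigma_1\cdots d\sigma_{2k}<\infty$. Then I would choose $k$ points $y_1,\dots,y_k\in B_\varepsilon(x_M)$ and a radius $\tilde\varepsilon>0$ so small that the balls $B_{2\tilde\varepsilon}(y_1),\dots,B_{2\tilde\varepsilon}(y_k)$ are pairwise disjoint and contained in $B_\varepsilon(x_M)$, and restrict the integral to the region where the $\ell$-th denominator pair $(x_{2\ell},x_{2\ell+1})$ lies in $B_{\tilde\varepsilon}(y_\ell)^2$ (indices mod $2k$). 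Because the two variables of each numerator pair then lie in different, well-separated balls, every numerator satisfies $\|x_{2\ell-1}-x_{2\ell}\|\ge 2\tilde\varepsilon$, whence $\mathcal{C}_k^{\,p}\ge(2\tilde\varepsilon)^{2kp}\prod_{\ell}\|x_{2\ell}-x_{2\ell+1}\|^{-2p}$. Using once more that the denominators form a perfect matching, the restricted integral factors into $\prod_{\ell=1}^k\int_{B_{\tilde\varepsilon}(y_\ell)^2}\|x-y\|^{-2p}\,d\sigma_x d\sigma_y$, so finiteness of the whole product forces each positive factor to be finite. A single such factor is finite only if $\int_{B_{\tilde\varepsilon/2}(e)}\|x-e\|^{-2p}\,d\sigma_x<\infty$, which by the local computation of Theorems \ref{Thm4.2} and \ref{Thm4.3} holds iff $0\le p<\frac d2$. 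Reinstating the symmetry gives $p\in\left(-\frac d2,\frac d2\right)$.

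The main obstacle is precisely this geometric assignment in the necessity step: I want each denominator pair confined to one small ball (to produce a singular local integral) while each numerator pair straddles two separated balls (to keep the numerators bounded away from zero). Viewing the two matchings together as a $2k$-cycle, a two-ball (two-colour) assignment closes up only when $k$ is even, so for odd $k$ a naive copy of the two-ball construction of Theorem \ref{Thm4.3} fails by a parity obstruction. Assigning the $k$ denominator pairs to $k$ distinct, mutually separated balls sidesteps this entirely and works uniformly in $k$; verifying that consecutive pairs land in different balls, and that $B_\varepsilon(x_M)$ has room for $k$ disjoint small balls, is the only point requiring care.
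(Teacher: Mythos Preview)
Your proof is correct and follows the same template the paper uses for the $k=2$ case in Theorems \ref{Thm4.1}--\ref{Thm4.3}; the paper itself gives no separate argument for general $k$, writing only ``from a similar argument in Theorem \ref{Thm4.3}''. Your sufficiency step is exactly the natural extension: bound numerators by $4$, bound $\rho$ by its maximum, and use that the denominator pairs $(x_2,x_3),\dots,(x_{2k},x_1)$ form a perfect matching of $\{1,\dots,2k\}$ so that Fubini factors the integral into a $k$-th power of the two-point integral analysed in Theorem \ref{Thm4.1}.

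Where you genuinely add something is in the necessity step. A literal transplant of the two-ball construction from Theorems \ref{Thm4.2}--\ref{Thm4.3} requires a two-colouring of the $2k$-cycle with denominator edges monochromatic and numerator edges bichromatic, and you are right that this closes up only when $k$ is even. Your remedy---placing each denominator pair $(x_{2\ell},x_{2\ell+1})$ in its own small ball $B_{\tilde\varepsilon}(y_\ell)$ with the $k$ balls pairwise separated inside $B_\varepsilon(x_M)$---is clean and works uniformly in $k$: consecutive denominator pairs share no variables, so each numerator pair automatically straddles two distinct balls and is bounded below by the ball separation, while the resulting lower bound factors into $\prod_{\ell}\int_{B_{\tilde\varepsilon}(y_\ell)^2}\|x-y\|^{-2p}\,d\sigma_x d\sigma_y$, forcing $p<\tfrac d2$ by the local computation already carried out in the paper. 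The only routine checks are that $B_\varepsilon(x_M)$ accommodates $k$ pairwise $2\tilde\varepsilon$-separated points for $\tilde\varepsilon$ small, and that $y_k\neq y_1$ (ensured by $k\ge 2$); both are immediate.
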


\section{Application of constant of motion functionals}\label{sec:5}
\setcounter{equation}{0}

In this section, we provide an application of constant of motion functionals for system \eqref{C-(1)}. We begin with the following idea. If $y_1$ and $y_2$ are distinct points on $\bbs^d$ and $0<q<1$ are given, then we have
\begin{align}\label{E-0}
\mathcal{H}_p[q\delta_{y_1}+(1-q)\delta_{y_2}]=\infty.
\end{align}
We can check this by the following formal calculation. We set $\tilde{\rho}:=q\delta_{y_1}+(1-q)\delta_{y_2}$, then
\begin{align*}
\mathcal{H}_p[q\delta_{y_1}+(1-q)\delta_{y_2}]&=\int_{(\bbs^d)^4}\left(\frac{\|x_1-x_2\|\cdot\|x_3-x_4\|}{\|x_2-x_3\|\cdot\|x_4-x_1\|}\right)^{2p}\tilde{\rho}(x_1)\tilde{\rho}(x_2)\tilde{\rho}(x_3)\tilde{\rho}(x_4)d\sigma_1 d\sigma_2d\sigma_3d\sigma_4\\
&\geq\int_{\{(y_1, y_2, y_2, y_1)\}}\left(\frac{\|x_1-x_2\|\cdot\|x_3-x_4\|}{\|x_2-x_3\|\cdot\|x_4-x_1\|}\right)^{2p}\tilde{\rho}(x_1)\tilde{\rho}(x_2)\tilde{\rho}(x_3)\tilde{\rho}(x_4)d\sigma_1 d\sigma_2d\sigma_3d\sigma_4\\
&= q^2(1-q)^2\times \infty=\infty.
\end{align*}
So we proved \eqref{E-0}, and this can be used to discuss the instability of a distribution $\tilde{\rho}$. For any $\varepsilon>0$, we can choose a continuous probability density function $\rho\in\mathcal{P}(\bbs^d)$ which satisfies:
\[
W_2(\tilde{\rho}, \rho)<\varepsilon,
\]
where $W_2$ is the Wasserstein-2 metric. Recall that 
\[
\mathcal{H}_p[\tilde{\rho}]=\infty\quad\text{and}\quad \mathcal{H}_p[\rho]<\infty.
\]
Although the Wasserstein-2 distance between $\tilde{\rho}$ and $\rho$ is small enough, we have $|\mathcal{H}_p[\tilde{\rho}]-\mathcal{H}_p[\rho]|=\infty$. Since $\mathcal{H}_p$ is a constant of motion functional, we can understand $\tilde{\rho}$ as an unstable state. We use this fact to show the instability of the bipolar state.

\begin{lemma}\label{Lem5.1}
Let $\rho_t\in\mathcal{P}(\bbs^d)$ be a family of density functions on $t\in[0, \infty)$ which is continuous along the time variable and satisfy
\begin{align}\label{E-0-0}
\lim_{t\to\infty}\int_{B_{\varepsilon}(c_k(t))}\rho_t(x)d\sigma_x\geq\delta_k,\quad k\in\{1, 2\},\quad \forall~\varepsilon\in(0, \varepsilon_0),
\end{align}
for fixed positive constants $\delta_1,\delta_2,\varepsilon_0$, and continuous curves on the unit sphere $\{c_k:[0, \infty)\to\bbs^d~|~k=1, 2\}$. If two curves $c_1$ and $c_2$ satisfy
\[
\|c_1(t)-c_2(t)\|\geq\delta,\quad \forall~t\geq0,
\]
for a positive real number $\delta$, then we have 
\[
\lim_{t\to\infty}\mathcal{H}_p[\rho_t]=\infty.
\]
\end{lemma}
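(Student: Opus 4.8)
The plan is to bound $\mathcal{H}_p[\rho_t]$ from below by restricting the four-fold integral to a product of small balls centred at $c_1(t)$ and $c_2(t)$, and then to exploit the forced concentration of mass. Since the integrand $\left(\tfrac{\|x_1-x_2\|\,\|x_3-x_4\|}{\|x_2-x_3\|\,\|x_4-x_1\|}\right)^{2p}\rho_t(x_1)\rho_t(x_2)\rho_t(x_3)\rho_t(x_4)$ is nonnegative, any restriction of the domain gives a lower bound. By the symmetry $\mathcal{H}_p=\mathcal{H}_{-p}$ of Proposition \ref{Pro3.1}(2) I may assume $p>0$ (the case $p=0$ is excluded, as $\mathcal{H}_0\equiv1$). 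I fix $\varepsilon\in\bigl(0,\min\{\varepsilon_0,\delta/2\}\bigr)$ and restrict the integration to the region
\[
x_1,x_4\in B_\varepsilon(c_1(t)),\qquad x_2,x_3\in B_\varepsilon(c_2(t)).
\]

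On this region the numerator distances are bounded below and the denominator distances above. Indeed, for $x_1\in B_\varepsilon(c_1(t))$ and $x_2\in B_\varepsilon(c_2(t))$ the triangle inequality together with $\|c_1(t)-c_2(t)\|\geq\delta$ gives $\|x_1-x_2\|\geq\delta-2\varepsilon$, and likewise $\|x_3-x_4\|\geq\delta-2\varepsilon$; while for $x_2,x_3\in B_\varepsilon(c_2(t))$ (resp. $x_4,x_1\in B_\varepsilon(c_1(t))$) one has $\|x_2-x_3\|\leq2\varepsilon$ (resp. $\|x_4-x_1\|\leq2\varepsilon$). Since the denominator factorises into a function of $(x_1,x_4)$ and a function of $(x_2,x_3)$ over a product domain, Tonelli splits the restricted integral into a product of two local energies $A_k(t,\varepsilon):=\int_{B_\varepsilon(c_k(t))^2}\|x-y\|^{-2p}\rho_t(x)\rho_t(y)\,d\sigma_xd\sigma_y$, and bounding $\|x-y\|\leq2\varepsilon$ inside each ball gives
\[
A_k(t,\varepsilon)\geq\frac{1}{(2\varepsilon)^{2p}}\Big(\int_{B_\varepsilon(c_k(t))}\rho_t\,d\sigma\Big)^2=:\frac{m_k(t,\varepsilon)^2}{(2\varepsilon)^{2p}}.
\]
Collecting the numerator bound $(\delta-2\varepsilon)^{4p}$ with the two energy bounds produces
\[
\mathcal{H}_p[\rho_t]\geq\left(\frac{\delta-2\varepsilon}{2\varepsilon}\right)^{4p}m_1(t,\varepsilon)^2\,m_2(t,\varepsilon)^2.
\]

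To finish I would send $t\to\infty$ with $\varepsilon$ still fixed. Hypothesis \eqref{E-0-0} yields $\liminf_{t\to\infty}m_k(t,\varepsilon)\geq\delta_k$, hence
\[
\liminf_{t\to\infty}\mathcal{H}_p[\rho_t]\geq\left(\frac{\delta-2\varepsilon}{2\varepsilon}\right)^{4p}\delta_1^2\,\delta_2^2.
\]
The left-hand side is a single number independent of $\varepsilon$, whereas the right-hand side tends to $+\infty$ as $\varepsilon\to0^+$ (this is where $p>0$ is used). Therefore $\liminf_{t\to\infty}\mathcal{H}_p[\rho_t]=\infty$, i.e. $\lim_{t\to\infty}\mathcal{H}_p[\rho_t]=\infty$.

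The step I expect to require the most care is the order of the two limits. For each fixed $t$ the functional $\mathcal{H}_p[\rho_t]$ is finite (Section \ref{sec:4}), so one cannot let $\varepsilon\to0$ at finite $t$: the divergence is genuinely an asymptotic-in-$t$ phenomenon. The argument must first take $t\to\infty$ at fixed $\varepsilon$ to activate the concentration hypothesis, and only afterwards let $\varepsilon\to0$; the logic closes precisely because the $t$-liminf is a fixed quantity bounded below by something that blows up in $\varepsilon$. A minor point to check along the way is that the choice $\varepsilon<\delta/2$ keeps $\delta-2\varepsilon>0$ and makes the two balls disjoint, so the numerator estimate is a genuine positive lower bound.
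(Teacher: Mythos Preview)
Your argument is correct and follows essentially the same route as the paper: restrict the four-fold integral to $x_1,x_4\in B_\varepsilon(c_1(t))$, $x_2,x_3\in B_\varepsilon(c_2(t))$, use the triangle inequality to bound the cross ratio below by $\bigl(\tfrac{\delta-2\varepsilon}{2\varepsilon}\bigr)^{2}$, pass to $t\to\infty$ at fixed $\varepsilon$ to pick up the mass factor $\delta_1^2\delta_2^2$, and finally let $\varepsilon\to0$. You are in fact a bit more careful than the paper in two places: you make explicit the reduction to $p>0$ (the paper tacitly needs this for the final limit to diverge, and the conclusion is false for $p=0$), and you spell out the order-of-limits reasoning via $\liminf$.
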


\begin{proof}
By direct calculations, we have
\begin{align}
\begin{aligned}\label{E-0-1}
\mathcal{H}_p[\rho_t]=&\int_{(\bbs^d)^4}\left(\frac{\|x_1-x_2\|\cdot \|x_3-x_4\|}{\|x_2-x_3\|\cdot \|x_4-x_1\|}\right)^{2p}\rho_t(x_1)\rho_t(x_2)\rho_t(x_3)\rho_t(x_4)d\sigma_{1}d\sigma_{2}d\sigma_{3}d\sigma_{4}\\
\geq&\sum_{k_1, k_2, k_3, k_4=1}^2\int_{B_\varepsilon(c_{k_1})\times B_\varepsilon(c_{k_2})\times B_\varepsilon(c_{k_3})\times B_\varepsilon(c_{k_4})}\left(\frac{\|x_1-x_2\|\cdot \|x_3-x_4\|}{\|x_2-x_3\|\cdot \|x_4-x_1\|}\right)^{2p}\\
&\hspace{5cm}\times\rho_t(x_1)\rho_t(x_2)\rho_t(x_3)\rho_t(x_4)d\sigma_{1}d\sigma_{2}d\sigma_{3}d\sigma_{4}\\
\geq&\sum_{\substack{k_1, k_2=1\\ k_1\neq k_2}}^2\int_{B_\varepsilon(c_{k_1})\times B_\varepsilon(c_{k_2})\times B_\varepsilon(c_{k_2})\times B_\varepsilon(c_{k_1})}\left(\frac{\|x_1-x_2\|\cdot \|x_3-x_4\|}{\|x_2-x_3\|\cdot \|x_4-x_1\|}\right)^{2p}\\
&\hspace{5cm}\times\rho_t(x_1)\rho_t(x_2)\rho_t(x_3)\rho_t(x_4)d\sigma_{1}d\sigma_{2}d\sigma_{3}d\sigma_{4}1.
\end{aligned}
\end{align}
From the triangle inequality, we have following inequalities:
\[
\begin{cases}
x, y\in B_\varepsilon(c_1)\quad\text{or}\quad x, y\in B_\varepsilon(c_2)\quad\Rightarrow\quad \|x-y\|\leq 2\varepsilon,\\
x\in B_\varepsilon(c_1),\quad y\in B_{\varepsilon}(c_2)\quad\Rightarrow\quad \|x-y\|\geq \|c_1-c_2\|-2\varepsilon=\delta-2\varepsilon.
\end{cases}
\]
This yields, if $k_1\neq k_2\in\{1, 2\}$ and $x_1, x_4\in B_{c_{k_1}},~x_2, x_3\in B_{c_{k_2}}$, we have
\[
\frac{\|x_1-x_2\|\cdot \|x_3-x_4\|}{\|x_2-x_3\|\cdot \|x_4-x_1\|}\geq \left(\frac{\delta-2\varepsilon}{2\varepsilon}\right)^2.
\]
This yields
\begin{align}
\begin{aligned}\label{E-0-2}
\mathcal{H}_p[\rho_t]&\geq \sum_{\substack{k_1, k_2=1\\ k_1\neq k_2}}^2\int_{B_\varepsilon(c_{k_1})}\int_{B_\varepsilon(c_{k_2})}\int_{B_\varepsilon(c_{k_2})}\int_{B_\varepsilon(c_{k_1})}\left(\frac{\delta-2\varepsilon}{2\varepsilon}\right)^{4p}\\
&\hspace{6cm}\times\rho_t(x_1)\rho_t(x_2)\rho_t(x_3)\rho_t(x_4)d\sigma_{1}d\sigma_{2}d\sigma_{3}d\sigma_{4}.
\end{aligned}
\end{align}
We combine the assumption \eqref{E-0-0} and calculations \eqref{E-0-1},\eqref{E-0-2} to get
\begin{align*}
\lim_{t\to\infty}\mathcal{H}_p[\rho_t]\geq\sum_{\substack{k_1, k_2=1\\ k_1\neq k_2}}^2 \left(\frac{\delta-2\varepsilon}{2\varepsilon}\right)^{4p}\delta_{k_1}^{2}\delta_{k_2}^2
=2\delta_1^2\delta_2^2\left(\frac{\delta-2\varepsilon}{2\varepsilon}\right)^{4p}.
\end{align*}
Since $\varepsilon$ can be any positive number in $(0, \varepsilon_0)$, we know that
\[
\lim_{t\to\infty}\mathcal{H}_p[\rho_t]=\infty
\]
from letting $\varepsilon\searrow0$.
\end{proof}

\begin{lemma}\label{Lem5.2}
Let $\rho_t$ be a solution of the following kinetic swarm sphere model:
\begin{align}\label{E-1}
\begin{cases}
\partial_t\rho_t+\nabla\cdot(\rho_tv_t)=0,\quad t>0,\\
\displaystyle v_t(x)=\Omega x+\int (y-\langle x, y\rangle x)\rho_t(y)d\sigma_y.
\end{cases}
\end{align}
Then we have the derivative of the order parameter 
\[
R(t)^2:=\iint\langle x, y\rangle \rho_t(x)\rho_t(y)d\sigma_xd\sigma_y
\]
as follows:
\[
\frac{d}{dt}R(t)^2=2\int\|x_c-\langle y, x_c\rangle y\|^2\rho_t(y)d\sigma_y\geq0.
\]
Furthermore, the second derivative of $R(t)^2$ is bounded.
\end{lemma}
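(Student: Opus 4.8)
The whole statement is governed by the first moment of $\rho_t$, so the plan is to track the centroid $x_c(t):=\int_{\bbs^d}x\,\rho_t(x)\,d\sigma_x\in\bbr^{d+1}$. First I would note that the order parameter factorizes, $R(t)^2=\langle x_c, x_c\rangle=\|x_c\|^2$, so that $\frac{d}{dt}R(t)^2=2\langle x_c,\dot x_c\rangle$ and the entire lemma reduces to computing $\dot x_c$ and then bounding $\ddot x_c$.

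To obtain $\dot x_c$ I would test the continuity equation against the linear coordinate functions $x\mapsto\langle e_i,x\rangle$ and integrate by parts on the boundaryless manifold $\bbs^d$. Two observations make this clean: the velocity field is tangent to the sphere, since $\langle v_t(x),x\rangle=\langle\Omega x,x\rangle+\langle x_c,x\rangle-\langle x,x_c\rangle\|x\|^2=0$ by skew-symmetry of $\Omega$ and $\|x\|=1$; and the covariant gradient of $x\mapsto\langle e_i,x\rangle$ on $\bbs^d$ is its tangential projection $e_i-\langle e_i,x\rangle x$. Because $v_t\perp x$, testing this tangential gradient against $v_t$ simply returns $\langle e_i,v_t\rangle$, which yields the mean-velocity identity $\dot x_c=\int_{\bbs^d}v_t(x)\rho_t(x)\,d\sigma_x$. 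Substituting $v_t=\Omega x+x_c-\langle x,x_c\rangle x$ and using $\int\rho_t\,d\sigma=1$ gives
\[
\dot x_c=\Omega x_c+x_c-\int_{\bbs^d}\langle x,x_c\rangle x\,\rho_t\,d\sigma_x.
\]
Taking the inner product with $2x_c$, the term $\langle x_c,\Omega x_c\rangle$ drops out by skew-symmetry and I am left with $\frac{d}{dt}R^2=2\|x_c\|^2-2\int\langle x,x_c\rangle^2\rho_t\,d\sigma_x$. The stated form then follows from the pointwise identity $\|x_c-\langle x,x_c\rangle x\|^2=\|x_c\|^2-\langle x,x_c\rangle^2$ (again using $\|x\|=1$), and nonnegativity is immediate since the integrand is a square.

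For the boundedness of $\frac{d^2}{dt^2}R(t)^2$ I would rely on a single uniform a priori bound: because $\rho_t$ is a probability density on the unit sphere, $\|x_c(t)\|\le 1$ for all $t$, and hence $\|v_t(x)\|\le\|\Omega\|_{\mathrm{op}}+2$ uniformly in $(t,x)$, which also forces $\|\dot x_c\|\le\|\Omega\|_{\mathrm{op}}+2$. Differentiating $F(t):=\frac{d}{dt}R(t)^2=2\|x_c\|^2-2\int\langle x,x_c\rangle^2\rho_t\,d\sigma_x$ produces terms from the explicit $x_c$-dependence, controlled through $\|\dot x_c\|$, together with one term of the form $\frac{d}{dt}\int g_t\,\rho_t\,d\sigma$ with $g_t(x)=\langle x,x_c\rangle^2$. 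Applying the continuity equation and integrating by parts once more rewrites this last term as $\int\langle\nabla g_t,v_t\rangle\rho_t\,d\sigma$, where $\nabla g_t=2\langle x,x_c\rangle\bigl(x_c-\langle x,x_c\rangle x\bigr)$ is uniformly bounded. Every resulting integrand is thus bounded by a constant depending only on $\|\Omega\|_{\mathrm{op}}$, giving the claimed uniform bound on the second derivative.

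I expect the genuine difficulty to lie not in the algebra but in justifying the analytic steps: interchanging $\frac{d}{dt}$ with the spatial integral and integrating by parts both require sufficient regularity of $\rho_t$. I would resolve this by invoking Theorem \ref{Thm3.1}, which represents $\rho_t=M_t\#\rho_0$ as the push-forward of the continuous datum $\rho_0$ under the smooth flow map $M_t$ of \eqref{C-3-7}; this regularity legitimizes all the differentiations under the integral sign and the integrations by parts used above.
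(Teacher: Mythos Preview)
Your argument is correct and follows essentially the same path as the paper: differentiate using the continuity equation, integrate by parts on $\bbs^d$ against a linear (respectively quadratic) test function, and use $\langle\Omega x_c,x_c\rangle=0$. The only cosmetic difference is that you first factor $R(t)^2=\|x_c(t)\|^2$ and compute $\dot x_c$, whereas the paper differentiates the double integral $\iint\langle x,y\rangle\rho_t(x)\rho_t(y)\,d\sigma_x d\sigma_y$ directly and then collapses one integral to $x_c$; the resulting identities and the treatment of the second derivative coincide.
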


\begin{proof}
By direct calculations, we have
\begin{align*}
\frac{d}{dt}R(t)^2&=2\int \langle x, y\rangle \rho_t(x)\partial_t\rho_t(y)d\sigma_x d\sigma_y
=-2\int \langle x, y\rangle \rho_t(x)\nabla_y\cdot(\rho_t(y) v_t(y))d\sigma_x d\sigma_y\\
&=2\int \langle v_t(y),x-\langle x, y\rangle y\rangle \rho_t(x)\rho_t(y) d\sigma_x d\sigma_y.
\end{align*}
If we define the average $x_c$ as follows:
\[
x_c(t):=\int x\rho_t(x)d\sigma_x,
\]
then we have
\[
\begin{cases}
\displaystyle v_t(x)=\Omega x+x_c-\langle x, x_c\rangle x,\\
\displaystyle\frac{d}{dt}R(t)^2=2\int \langle v_t(y), x_c-\langle x_c, y\rangle y\rangle \rho_t(y)d\sigma_y.
\end{cases}
\]
This yields
\begin{align*}
\frac{d}{dt}R(t)^2&=2\int \big\langle \Omega y+x_c-\langle y, x_c\rangle y, x_c-\langle x_c, y\rangle y\big\rangle \rho_t(y)d\sigma_y\\
&=2\int \langle \Omega y, x_c-\langle x_c, y\rangle y\rangle \rho_t(y)d\sigma_y+2\int\|x_c-\langle y, x_c\rangle y\|^2\rho_t(y)d\sigma_y\\
&=2\langle \Omega x_c, x_c\rangle+2\int\|x_c-\langle y, x_c\rangle y\|^2\rho_t(y)d\sigma_y\\
&=2\int\|x_c-\langle y, x_c\rangle y\|^2\rho_t(y)d\sigma_y\geq0.
\end{align*}
Again, we have
\begin{align*}
\frac{d^2}{dt^2}R(t)^2&=2\int\|x_c-\langle y, x_c\rangle y\|^2\partial_t\rho_t(y)d\sigma_y
=-2\int\|x_c-\langle y, x_c\rangle y\|^2\nabla_y\cdot(\rho_t(y)v_t(y))d\sigma_y.
\end{align*}
We can calculate the covariant derivative of $\|x_c-\langle y, x_c\rangle y\|^2$ with respect to $y$ as follows:
\begin{align*}
\nabla_y\|x_c-\langle y, x_c\rangle y\|^2&=\nabla_y\big\langle x_c-\langle y, x_c\rangle y, x_c-\langle y, x_c\rangle y\big\rangle\\
&=-\nabla_y\langle x_c, y\rangle^2=-2\langle x_c, y\rangle\nabla_y\langle x_c, y\rangle
=-2\langle x_c, y\rangle(x_c-\langle x_c, y\rangle y).
\end{align*}
So we have
\begin{align*}
\frac{d^2}{dt^2}R(t)^2&=4\int\langle x_c, y\rangle \langle x_c-\langle x_c, y\rangle y, v_t(y)\rangle \rho_t(y)d\sigma_y.
\end{align*}
Since $v_t$ is bounded, we know that the R.H.S. of the above relation is bounded. So we can conclude that the second derivative of the order parameter $R(t)^2$ is bounded.
\end{proof}
\begin{remark}\label{R5.1}
By Lemma \ref{Lem5.2}, we know the following limit exists:
\[
\lim_{t\to\infty}R(t)=R^\infty.
\]
Here, we used that $R(t)$ is increasing and bounded above. 
\end{remark}
\begin{lemma}[\cite{Ba} Barbalat's lemma]\label{Lem5.3}
Suppose that  a real-valued function $f: [0, \infty) \to \bbr$ is uniformly continuous and it satisfies
\[ \lim_{t \to \infty} \int_0^\top  f(s)d s \quad \textup{exists}. \]
Then, $f$ tends to zero as $t \to \infty$:
\[ \lim_{t \to \infty} f(t) = 0. \]
\end{lemma}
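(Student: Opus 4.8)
The plan is to argue by contradiction, exploiting the tension between uniform continuity, which prevents $f$ from dropping off too quickly, and convergence of the integral, which forces the mass of $f$ over far-away windows to vanish. First I would suppose the conclusion fails, so that $f(t)\not\to0$ as $t\to\infty$. Then there exist a constant $\varepsilon_0>0$ and a sequence $t_n\to\infty$ such that $|f(t_n)|\ge\varepsilon_0$ for every $n$.

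Next I would invoke uniform continuity to fix a length $\delta>0$, independent of $n$, such that $|s-t|\le\delta$ implies $|f(s)-f(t)|<\varepsilon_0/2$. On each window $[t_n,t_n+\delta]$ this keeps $f(s)$ within $\varepsilon_0/2$ of $f(t_n)$, so $f$ retains the sign of $f(t_n)$ and satisfies $|f(s)|\ge\varepsilon_0/2$ throughout the window. Integrating this same-sign lower bound yields $\left|\int_{t_n}^{t_n+\delta}f(s)\,ds\right|\ge\frac{\varepsilon_0\delta}{2}$ for every $n$, a quantity bounded below by a fixed positive constant.

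Finally I would derive a contradiction from the existence of $L:=\lim_{t\to\infty}\int_0^t f(s)\,ds$. Along the sequence $t_n\to\infty$ we have both $\int_0^{t_n}f(s)\,ds\to L$ and $\int_0^{t_n+\delta}f(s)\,ds\to L$, so their difference $\int_{t_n}^{t_n+\delta}f(s)\,ds$ must tend to $0$. This is incompatible with the uniform lower bound $\varepsilon_0\delta/2>0$ obtained above, and the contradiction forces $\lim_{t\to\infty}f(t)=0$.

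The only delicate point is the sign-control step in the second paragraph: one must use uniform continuity, rather than mere continuity, to obtain a window length $\delta$ that is independent of $n$ and on which $f$ stays bounded away from zero with a definite sign. Without this uniformity the windows could shrink as $n\to\infty$, the integral over each window could suffer cancellation, and the lower bound would collapse. Everything else is a routine application of the convergence of $\int_0^t f$.
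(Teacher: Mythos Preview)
Your argument is correct and is precisely the classical proof of Barbalat's lemma. The paper itself does not supply a proof of this lemma; it is merely stated with a citation to Barb\u{a}lat's original work, so there is nothing to compare against. Your contradiction argument via uniform continuity and the Cauchy criterion for the antiderivative is exactly the standard route.
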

From Lemmas \ref{Lem5.2} and \ref{Lem5.3}, we have the following theorem.
\begin{theorem}\label{Thm5.1}
Suppose that the initial data $\rho_0$ satisfies
\[
\left\|\int y\rho_0(y)d\sigma_y\right\|\neq0,\quad i.e. \quad R(0)>0,
\]
and let $\rho_t$ be a solution of system \eqref{E-1}. Then, there exists a real number $R^\infty\in(0, 1]$ and curve $\gamma: [0, \infty)\to\bbs^d$ defined as 
\[
R^\infty:=\lim_{t\to\infty}R(t),\quad \gamma(t):=\frac{x_c}{\|x_c\|}\in\bbs^d
\]
such that
\[
\lim_{t\to\infty}\int_{B_\varepsilon(\gamma)}\rho_t(y)d\sigma_y=\frac{1}{2}(1+R^\infty),\quad \lim_{t\to\infty}\int_{B_\varepsilon(-\gamma)}\rho_t(y)d\sigma_y=\frac{1}{2}(1-R^\infty)
\]
for all $\varepsilon\in(0, \sqrt{2})$.
\end{theorem}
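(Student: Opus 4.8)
The plan is to extract from Lemma \ref{Lem5.2} together with Barbalat's lemma the statement that the tangential dispersion of the order parameter vanishes, and then to convert this into concentration of $\rho_t$ at the two antipodal points $\pm\gamma$. First I would record the consequences of $R(0)>0$: since Lemma \ref{Lem5.2} gives $\frac{d}{dt}R(t)^2\geq 0$, the order parameter is nondecreasing, so $R(t)\geq R(0)>0$, and by Remark \ref{R5.1} the limit $R^\infty=\lim_{t\to\infty}R(t)\in(0,1]$ exists. In particular $x_c(t)=\int y\rho_t(y)d\sigma_y$ never vanishes, so $\gamma(t)=x_c/\|x_c\|$ is a well-defined continuous curve with $\langle x_c,\gamma\rangle=\|x_c\|=R(t)$.

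Next I would apply Barbalat's lemma to $f(t):=\frac{d}{dt}R(t)^2=2\int\|x_c-\langle y,x_c\rangle y\|^2\rho_t(y)d\sigma_y$. By Lemma \ref{Lem5.2}, $f\geq 0$, its derivative $\frac{d^2}{dt^2}R(t)^2$ is bounded (so $f$ is uniformly continuous), and $\int_0^t f(s)\,ds=R(t)^2-R(0)^2$ converges by Remark \ref{R5.1}; hence Lemma \ref{Lem5.3} gives $f(t)\to 0$. Using $\|x_c-\langle y,x_c\rangle y\|^2=\|x_c\|^2-\langle x_c,y\rangle^2=R(t)^2\big(1-\langle\gamma,y\rangle^2\big)$ and $R(t)^2\to(R^\infty)^2>0$, this reduces to
\[
\lim_{t\to\infty}\int_{\bbs^d}\big(1-\langle\gamma(t),y\rangle^2\big)\rho_t(y)\,d\sigma_y=0.
\]

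Now I would convert this into concentration. The integrand is nonnegative and vanishes exactly at $y=\pm\gamma$; since $\|y\mp\gamma\|^2=2\mp2\langle y,\gamma\rangle$, on the complement of $B_{\varepsilon'}(\gamma)\cup B_{\varepsilon'}(-\gamma)$ it is bounded below by a positive geometric constant $c(\varepsilon')$, uniform in $t$. Therefore the mass outside the two caps satisfies $\int_{(B_{\varepsilon'}(\gamma)\cup B_{\varepsilon'}(-\gamma))^c}\rho_t\,d\sigma\to 0$, so writing $m_\pm(t,\varepsilon)=\int_{B_\varepsilon(\pm\gamma)}\rho_t\,d\sigma$ we get $m_+(t,\varepsilon')+m_-(t,\varepsilon')\to 1$. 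To pin down the split I would use the identity $R(t)=\int\langle y,\gamma\rangle\rho_t\,d\sigma$, decomposing it over $B_{\varepsilon'}(\gamma)$, $B_{\varepsilon'}(-\gamma)$, and the vanishing remainder; since $\langle y,\gamma\rangle\in[1-\tfrac{\varepsilon'^2}{2},1]$ on the first cap and $\in[-1,-1+\tfrac{\varepsilon'^2}{2}]$ on the second, this sandwiches $m_\pm(t,\varepsilon')$ between explicit bounds converging, as $t\to\infty$ and then $\varepsilon'\to 0$, to $\tfrac12(1\pm R^\infty)$. Finally, to obtain the conclusion for each fixed $\varepsilon\in(0,\sqrt2)$ I would compare radii: choosing $\varepsilon'<\min(\varepsilon,2-\varepsilon)$, the annulus $B_\varepsilon(\gamma)\setminus B_{\varepsilon'}(\gamma)$ lies in the complement of the $\varepsilon'$-caps, so $|m_+(t,\varepsilon)-m_+(t,\varepsilon')|\to 0$, and letting $\varepsilon'\to 0$ yields $\lim_{t\to\infty}m_+(t,\varepsilon)=\tfrac12(1+R^\infty)$, and symmetrically for $m_-$.

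The main obstacle is this last step: the exact masses $\tfrac12(1\pm R^\infty)$ emerge only in the limit $\varepsilon'\to0$, not for a fixed cap radius, so the delicate part is the double limit—showing that enlarging the cap from radius $\varepsilon'$ to $\varepsilon$ captures only mass that itself vanishes as $t\to\infty$, while keeping the lower bound $c(\varepsilon')$ uniform in $t$ despite the concentration point $\gamma(t)$ moving in time.
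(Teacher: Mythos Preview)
Your argument is correct and follows the paper through the first two stages: applying Barbalat's lemma to $\frac{d}{dt}R(t)^2$ to obtain $\int(1-\langle\gamma,y\rangle^2)\rho_t\to 0$, and then using the uniform lower bound on $1-\langle\gamma,y\rangle^2$ outside the antipodal caps to conclude $m_+(t,\varepsilon)+m_-(t,\varepsilon)\to 1$.

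The only genuine difference is in how the split between $m_+$ and $m_-$ is identified. You use only the first moment $R(t)=\int\langle y,\gamma\rangle\rho_t$, sandwich $\langle y,\gamma\rangle$ on each cap of radius $\varepsilon'$ by $[1-\tfrac{\varepsilon'^2}{2},1]$ and $[-1,-1+\tfrac{\varepsilon'^2}{2}]$, and then pass through a double limit $t\to\infty$, $\varepsilon'\to 0$, comparing back to the fixed-$\varepsilon$ masses via the vanishing annulus. This works, and your handling of the double limit is sound. The paper instead extracts a second equation directly for each fixed $\varepsilon$: from $\int\langle y,\gamma\rangle^2\rho_t\to 1$ and the sandwich $\int\langle y,\gamma\rangle^2\rho_t\leq\int|\langle y,\gamma\rangle|\rho_t\leq 1$ it gets $\int|\langle y,\gamma\rangle|\rho_t\to 1$, which after discarding the outside mass and noting that $|\langle y,\gamma\rangle|=\pm\langle y,\gamma\rangle$ on $B_\varepsilon(\pm\gamma)$ becomes $\lim\big(\int_{B_\varepsilon(\gamma)}\langle y,\gamma\rangle\rho_t-\int_{B_\varepsilon(-\gamma)}\langle y,\gamma\rangle\rho_t\big)=1$. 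Together with $\lim\big(\int_{B_\varepsilon(\gamma)}\langle y,\gamma\rangle\rho_t+\int_{B_\varepsilon(-\gamma)}\langle y,\gamma\rangle\rho_t\big)=R^\infty$, this solves for both cap integrals, and $\langle y,\gamma\rangle\leq 1$ (resp.\ $\geq -1$) then pins the masses without any shrinking of the radius. The trade-off: your approach is the natural first attempt and is self-contained with only the first moment; the paper's trick with $|\langle y,\gamma\rangle|$ avoids the double-limit bookkeeping entirely and gives the conclusion for fixed $\varepsilon$ in one stroke.
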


\begin{proof}
From Lemmas \ref{Lem5.2} and \ref{Lem5.3}, we have
\[
\lim_{t\to\infty}\frac{d}{dt}R(t)^2=0,
\]
and this yields
\[
\lim_{t\to\infty}\int\|x_c-\langle y, x_c\rangle y\|^2\rho_t(y)d\sigma_y=0.
\]
Since $R(t)=\|x_c(t)\|$ is increasing, if $R(0)>0$, then $R(t)>0$ for all $t\geq0$. From the property of the limit, we have
\[
\lim_{t\to\infty}\int\|x_c-\langle y, x_c\rangle y\|^2\rho_t(y)d\sigma_y=\lim_{t\to\infty}R(t)^2\cdot \lim_{t\to\infty}\int\|\gamma-\langle y, \gamma\rangle y\|^2\rho_t(y)d\sigma_y,
\]
and this implies
\begin{align}\label{E-2}
0=\lim_{t\to\infty}\int\|\gamma-\langle y, \gamma\rangle y\|^2\rho_t(y)d\sigma_y=\lim_{t\to\infty}\int(1-\langle y, \gamma\rangle^2)\rho_t(y)d\sigma_y.
\end{align}
For $\varepsilon\in(0, \sqrt{2})$ we have the following calculation:
\begin{align}
\begin{aligned}\label{E-3}
\int_{\bbs^d}(1-\langle y, \gamma\rangle^2)\rho_t(y)d\sigma_y
&\geq \int_{\bbs^{n}\backslash(B_\varepsilon(\gamma)\cup B_\varepsilon(-\gamma))}(1-\langle y, \gamma\rangle^2)\rho_t(y)d\sigma_y\\
&\geq \varepsilon^2\left(1-\frac{\varepsilon^2}{4}\right) \int_{\bbs^{n}\backslash(B_\varepsilon(\gamma)\cup B_\varepsilon(-\gamma))}\rho_t(y)d\sigma_y.
\end{aligned}
\end{align}
Here, we used the simple fact:
\[
\sup_{x\in\bbs^{n}\backslash(B_\varepsilon(\gamma)\cup B_\varepsilon(-\gamma))}\langle \gamma, x\rangle=1-\frac{\varepsilon^2}{2},
\]
in the last inequality. Now we combine two relations \eqref{E-2} and \eqref{E-3} to get
\[
\varepsilon^2\left(1-\frac{\varepsilon^2}{4}\right)\cdot\lim_{t\to\infty}\left(1-\int_{B_\varepsilon(\gamma)}\rho_t(y)d\sigma_y-\int_{B_\varepsilon(-\gamma)}\rho_t(y)d\sigma_y\right)=0.
\]
Since $\varepsilon\in(0, \sqrt{2})$ is a fixed number, the above relation yields
\[
\lim_{t\to\infty}\left(\int_{B_\varepsilon(\gamma)}\rho_t(y)d\sigma_y+\int_{B_\varepsilon(-\gamma)}\rho_t(y)d\sigma_y\right)=1
\]
or equivalently
\begin{align}\label{E-3-1}
\lim_{t\to\infty}\int_{\bbs^d\backslash(B_\varepsilon(\gamma)\cup B_\varepsilon(-\gamma))}\rho_t(y)d\sigma_y=0.
\end{align}

Relation \eqref{E-2} can be rewritten as
\begin{align}\label{E-4}
1=\lim_{t\to\infty}\int_{\bbs^d} \langle y, \gamma\rangle^2\rho_t(y)d\sigma_y.
\end{align}
Since $|\langle y, \gamma(t)\rangle|\leq1$, we have the following inequality:
\begin{align}\label{E-5}
1=\int_{\bbs^d}  \rho_t(y)d\sigma_y\geq\int_{\bbs^d} |\langle y, \gamma\rangle| \rho_t(y)d\sigma_y\geq\int_{\bbs^d} \langle y, \gamma\rangle^2\rho_t(y)d\sigma_y.
\end{align}
We apply the sandwich theorem to \eqref{E-5} with \eqref{E-4}, then we get
\[
\lim_{t\to\infty}\int_{\bbs^d}|\langle y, \gamma\rangle|\rho_t(y)d\sigma_y=1.
\]
From the above relation, we have
\begin{align*}
1&=\lim_{t\to\infty}\int_{\bbs^d}|\langle y, \gamma\rangle|\rho_t(y)d\sigma_y\\
&=\lim_{t\to\infty}\left(\int_{B_\epsilon(\gamma)}|\langle y, \gamma\rangle|\rho_t(y)d\sigma_y+\int_{B_\epsilon(-\gamma)}|\langle y, \gamma\rangle|\rho_t(y)d\sigma_y+\int_{\bbs^d\backslash(B_\epsilon(\gamma)\cup B_\epsilon(-\gamma))}|\langle y, \gamma\rangle|\rho_t(y)d\sigma_y\right)\\
&=\lim_{t\to\infty}\left(\int_{B_\epsilon(\gamma)}\langle y, \gamma\rangle\rho_t(y)d\sigma_y-\int_{B_\epsilon(-\gamma)}\langle y, \gamma\rangle\rho_t(y)d\sigma_y\right).
\end{align*}
Here, we used \eqref{E-3-1} in the last equality. So we get
\begin{align}\label{E-6}
\lim_{t\to\infty}\left(\int_{B_\varepsilon(\gamma)}\langle y, \gamma\rangle\rho_t(y)d\sigma_y-\int_{B_\varepsilon(-\gamma)}\langle y, \gamma\rangle\rho_t(y)d\sigma_y\right)=1.
\end{align}
On the other hand, we have
\[
R\gamma=x_c=\int_{\bbs^d} y\rho(y)d\sigma_y.
\]
We take inner product with $\gamma$ to both term of the above relation to get
\[
R=\int_{\bbs^d} \langle y, \gamma\rangle \rho_t(y)d\sigma_y.
\]
Since $\lim_{t\to\infty}R(t)$ exists, we have
\begin{align}\label{E-7}
R^\infty=\lim_{t\to\infty}\left(\int_{B_\varepsilon(\gamma)}\langle y, \gamma\rangle \rho_t(y)d\sigma_y+\int_{B_\varepsilon(-\gamma)}\langle y, \gamma\rangle \rho_t(y)d\sigma_y\right).
\end{align}
From two limits \eqref{E-6} and \eqref{E-7}, we get
\[
\lim_{t\to\infty}\int_{B_\varepsilon(\gamma)}\langle y, \gamma\rangle\rho_t(y)d\sigma_y=\frac{1}{2}(1+R^\infty),\quad \lim_{t\to\infty}\int_{B_\varepsilon(-\gamma)}\langle y, \gamma\rangle\rho_t(y)d\sigma_y=-\frac{1}{2}(1-R^\infty),
\]
and these relatoins yield
\[
\lim_{t\to\infty}\int_{B_\varepsilon(\gamma)}\rho_t(y)d\sigma_y=\frac{1}{2}(1+R^\infty),\quad \lim_{t\to\infty}\int_{B_\varepsilon(-\gamma)}\rho_t(y)d\sigma_y=\frac{1}{2}(1-R^\infty).
\]
\end{proof}

From Lemma \ref{Lem5.1} and Theorem \ref{Thm5.1}, we can obtain the following theorem.

\begin{theorem}\label{thm5.2}
Let $\rho_t$ be a solution of system \eqref{E-1} with $R(0)>0$. Then we have
\[
R^\infty=1.
\]

\end{theorem}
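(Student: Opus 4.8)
The plan is to argue by contradiction, playing the two results already in hand against each other: Theorem \ref{Thm5.1} forces essentially all of the mass to concentrate at the antipodal pair $\{\gamma(t),-\gamma(t)\}$, whereas the constancy and finiteness of $\mathcal{H}_p$ forbids mass from sitting at two genuinely separated clusters. First I would fix a single exponent $p\in\left(0,\tfrac{d}{2}\right)$, which is nonempty for every $d\geq1$. By Remark \ref{Rem4.2} the functional $\mathcal{H}_p$ is finite on every continuous probability density, and by Theorem \ref{Thm3.2} it is a constant of motion for system \eqref{E-1}, which is the special case of \eqref{C-(1)} with $X(t)=\int y\rho_t(y)\,d\sigma_y$. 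Using the push-forward representation $\rho_t=M_t\#\rho_0$ of Theorem \ref{Thm3.1} and the fact that $M_t$ in \eqref{C-3-7} is a diffeomorphism, each $\rho_t$ is again a continuous density, so
\[
\mathcal{H}_p[\rho_t]=\mathcal{H}_p[\rho_0]<\infty\qquad\text{for all }t\geq0.
\]

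Next, suppose for contradiction that $R^\infty<1$. Since $R(0)>0$ and $R$ is nondecreasing by Lemma \ref{Lem5.2}, we have $R(t)>0$ for all $t$, so $\gamma(t)=x_c(t)/\|x_c(t)\|$ is a well-defined continuous curve on $\bbs^d$, and so is $-\gamma(t)$. I would set $c_1:=\gamma$ and $c_2:=-\gamma$ and read off from Theorem \ref{Thm5.1} the concentration masses
\[
\lim_{t\to\infty}\int_{B_\varepsilon(\gamma)}\rho_t(y)\,d\sigma_y=\tfrac12(1+R^\infty)=:\delta_1,\qquad \lim_{t\to\infty}\int_{B_\varepsilon(-\gamma)}\rho_t(y)\,d\sigma_y=\tfrac12(1-R^\infty)=:\delta_2,
\]
valid for every $\varepsilon\in(0,\sqrt2)$. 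The assumption $R^\infty<1$ makes both $\delta_1,\delta_2$ strictly positive, which is precisely the hypothesis \eqref{E-0-0} of Lemma \ref{Lem5.1}. Moreover, the two curves are antipodal unit vectors, so $\|c_1(t)-c_2(t)\|=2$ for all $t$, giving the uniform separation $\delta=2$ required by that lemma.

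With these hypotheses verified, Lemma \ref{Lem5.1} yields $\lim_{t\to\infty}\mathcal{H}_p[\rho_t]=\infty$, contradicting the first paragraph, where $\mathcal{H}_p[\rho_t]$ was shown to be constant and finite. Hence the case $R^\infty<1$ is impossible. Since Theorem \ref{Thm5.1} already guarantees $R^\infty\in(0,1]$, we conclude $R^\infty=1$.

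I do not anticipate a genuine obstacle in the main chain of implications, since the analytic work is carried out in Lemma \ref{Lem5.1} and Theorem \ref{Thm5.1}. The one point deserving care is the continuity-in-time hypothesis of Lemma \ref{Lem5.1}: I must confirm that $t\mapsto\rho_t$ is continuous and that each $\rho_t$ remains a bona fide continuous probability density, which follows from the diffeomorphism property of $M_t$ in \eqref{C-3-7}. A second, smaller check is that the chosen $p$ lies strictly inside $\left(0,\tfrac{d}{2}\right)$, so that the strict positivity of the exponent drives the blow-up factor $\big(\tfrac{\delta-2\varepsilon}{2\varepsilon}\big)^{4p}\to\infty$ as $\varepsilon\searrow0$ inside Lemma \ref{Lem5.1}; this is what turns the separated two-cluster concentration into divergence of $\mathcal{H}_p$.
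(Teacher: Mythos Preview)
Your proposal is correct and follows essentially the same route as the paper: argue by contradiction, note that $\mathcal{H}_p[\rho_t]=\mathcal{H}_p[\rho_0]<\infty$ since $\mathcal{H}_p$ is a finite constant of motion, and then combine Theorem~\ref{Thm5.1} with Lemma~\ref{Lem5.1} (taking $c_1=\gamma$, $c_2=-\gamma$, $\delta=2$) to force $\lim_{t\to\infty}\mathcal{H}_p[\rho_t]=\infty$ when $R^\infty<1$. Your write-up is in fact more careful than the paper's in spelling out the verification of the hypotheses of Lemma~\ref{Lem5.1} and the need for $p>0$.
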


\begin{proof}
Since $\mathcal{H}_p[\rho_0]<\infty$, we know that $\lim_{t\to\infty}\mathcal{H}_p[\rho_t]$ can not be infinite. If $0<R^\infty<1$, then Lemma \ref{Lem5.1} and Theorem \ref{Thm5.1} yield
\[
\lim_{t\to\infty}\mathcal{H}_p[\rho_t]=\infty.
\]
This makes contradiction. Since $R(t)$ is increasing, we can conclude that $R^\infty=1$.
\end{proof}

We can compare the result of Theorem \ref{thm5.2} with the particle model case. First, we consider a solution $\{x_j\}$ of the Lohe sphere model \eqref{B-(3)-1}. Suppose that $x_j^0\neq x_k^0$ for all $j\neq k\in\mathcal{N}$. Here we assume that the number of particles $N$ is greater or equal than four. From a similar argument that we used in Theorem \ref{Thm5.1}, we know that the state converges to bipolar states. i.e. there exists a smooth curve $\gamma:[0, \infty)\to\bbs^d$ such that
\[
\text{either}\quad \lim_{t\to\infty}\|x_j(t)-\gamma(t)\|=0\quad\text{or}\quad  \lim_{t\to\infty}\|x_j(t)+\gamma(t)\|=0\quad\forall~j\in\mathcal{N}.
\]
So, there exists a partition of $\mathcal{N}=\mathcal{N}_+\sqcup\mathcal{N}_-$ such that
\[
\begin{cases}
\displaystyle\lim_{t\to\infty}\|x_j(t)-\gamma(t)\|=0,\quad\forall j\in\mathcal{N}_+,\vspace{0.2cm}\\
\displaystyle\lim_{t\to\infty}\|x_k(t)+\gamma(t)\|=0,\quad\forall k\in\mathcal{N}_-.
\end{cases}
\]
Now we assume that both $\mathcal{N}_+$ and $\mathcal{N}_-$ are greater than two. Then we can choose
\[
i_1\neq i_2\in \mathcal{N}_+,\quad i_3\neq i_4\in\mathcal{N}_-.
\] 
We check that $\mathcal{C}_{i_1i_2i_3i_4}$ is a constant of motion in Section \ref{sec:2}. However, we have
\[
\lim_{t\to\infty}\mathcal{C}_{i_1i_2i_3i_4}(t)=\lim_{t\to\infty}\frac{\|x_{i_1}(t)-x_{i_2}(t)\|^2\cdot\|x_{i_3}(t)-x_{i_4}(t)\|^2}{\|x_{i_2}(t)-x_{i_3}(t)\|^2\cdot\|x_{i_4}(t)-x_{i_1}(t)\|^2}=\infty,
\]
since 
\[
\lim_{t\to\infty}\|x_{i_2}-x_{i_3}\|=\lim_{t\to\infty}\|x_{i_4}-x_{i_1}\|=0,\quad \lim_{t\to\infty}\|x_{i_1}-x_{i_2}\|=\lim_{t\to\infty}\|x_{i_3}-x_{i_4}\|=2.
\]
So we can conclude that
\[
\min(|\mathcal{N}_+|,|\mathcal{N}_-|)\leq 1,
\]
where $|\mathcal{N}_+|$ and $|\mathcal{N}_-|$ are the number of elements of $\mathcal{N}_+$ and $\mathcal{N}_-$, respectively. If we approach this result as a density view point, then we have
\[
\min\left(\frac{|\mathcal{N}_+|}{N},\frac{|\mathcal{N}_-|}{N}\right)\leq \frac{1}{N}.
\]
Since the meaning of a mean field formulation is letting $N\to\infty$, we have
\[
0\leq \lim_{N\to\infty}\min\left(\frac{|\mathcal{N}_+|}{N},\frac{|\mathcal{N}_-|}{N}\right)\leq \lim_{N\to\infty}\frac{1}{N}=0
\]
and this yields
\[
 \lim_{N\to\infty}\min\left(\frac{|\mathcal{N}_+|}{N},\frac{|\mathcal{N}_-|}{N}\right)=0.
\]
Finally, this result is consistent with the result of Theorem \ref{thm5.2}.

\section{Nonidentical case}\label{sec:6}
\setcounter{equation}{0}

In this section, we consider non-identical models. A kinetic version of the identical swarm sphere model \eqref{B-(3)-1} is \eqref{C-(3)}. In this case, natural frequencies of all particles are equal. From now on, we study non-identical case. Generally, if natural frequencies of the swarm sphere model is not identical, we can express the system as
\begin{align}\label{F-(1)}
\begin{cases}
\displaystyle\dot{x}_i=\Omega_ix_i+\frac{\kappa}{N}(x_k-\langle x_i, x_k\rangle x_i),\quad t>0,\\
x_i(0)=x_i^0\in\bbs^d,\quad i\in\mathcal{N},
\end{cases}
\end{align}
where $\Omega_i\in\mathrm{Skew}_{d+1}(\bbr)$, and the kinetic version of system \eqref{F-(1)} can be expressed as follows:
\begin{align}\label{F-(2)}
\begin{cases}
\partial_tf_t+\nabla\cdot(L[f_t]f_t)=0,\quad f_t\in\mathcal{P}(\Xi),\quad t>0,\\
\displaystyle L[f](x, \Omega)=\Omega x+\int_{\Xi}(y-\langle y, x\rangle)f(y, \Omega_*)d\sigma_yd\Omega_*,
\end{cases}
\end{align}
where 
\[
\Xi=\bbs^d\times \mathrm{Skew}_{d+1}(\bbr)\quad\text{and}\quad \mathrm{Skew}_{d+1}(\bbr)=\left\{\Omega\in\bbr^{(d+1)\times (d+1)}: \Omega^\top =-\Omega\right\}.
\]

From \eqref{F-(1)} and \eqref{F-(2)}, we have the following non-identical version of system \eqref{B-(3)} as follows:
\begin{align}\label{F-(3)}
\begin{cases}
\displaystyle\dot{x}_i=\Omega_ix_i+X-\langle x_i, X\rangle x_i,\quad t>0,\\
x_i(0)=x_i^0\in\bbs^d,\quad i\in\mathcal{N},
\end{cases}
\end{align}
and its kinetic version is
\begin{align}\label{F-(4)}
\begin{cases}
\partial_tf_t+\nabla\cdot(u_tf_t)=0,\quad f_t\in\mathcal{P}(\Xi),\quad t>0,\\
\displaystyle u_t(x, \Omega)=\Omega x+X(t)-\langle X(t), x\rangle x.
\end{cases}
\end{align}

From straightforward calculations, we have the following proposition.
\begin{proposition}\label{Prop6.1}
Let $\mathcal{X}=\{x_j\}$ be a solution of system \eqref{F-(3)}.
For any indices $(i_1, i_2, i_3, i_4)\in C_2$ which satisfies $\Omega_{i_1}=\Omega_{i_2}=\Omega_{i_3}=\Omega_{i_4}$, a cross-ratio $\mathcal{C}_{i_1i_2i_3i_4}$ is a constant of motion.
\end{proposition}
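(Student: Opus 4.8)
The plan is to show that $\log \mathcal{C}_{i_1i_2i_3i_4}$ has vanishing time-derivative by computing the logarithmic derivative of each of the four squared pairwise distances that build the cross-ratio. The whole argument rests on a single pairwise identity: whenever two indices $k,l$ carry the \emph{same} natural-frequency matrix $\Omega_k=\Omega_l$, the squared distance $\|x_k-x_l\|^2$ evolves exactly as in the identical case treated in Lemma \ref{Lem2.1}.

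Concretely, I would first differentiate $\|x_k-x_l\|^2$ along \eqref{F-(3)} and split $\langle \dot x_k-\dot x_l,\, x_k-x_l\rangle$ into a skew-symmetric part coming from $\Omega_k x_k-\Omega_l x_l$ and a projection part coming from $\langle x_l,X\rangle x_l-\langle x_k,X\rangle x_k$ (the common forcing $X$ itself cancels in the difference). The projection part is identical to the computation in Lemma \ref{Lem2.1} and yields $-\tfrac12\|x_k-x_l\|^2\langle X,\,x_k+x_l\rangle$. The decisive step is the skew-symmetric part: it equals $-\langle \Omega_k x_k,\,x_l\rangle-\langle \Omega_l x_l,\,x_k\rangle$, and when $\Omega_k=\Omega_l=\Omega$, skew-symmetry gives $\langle \Omega x_l,\,x_k\rangle=-\langle \Omega x_k,\,x_l\rangle$, so the two terms cancel. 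This is the only place the hypothesis enters, and it is the crux of the proof. Hence, for any pair with $\Omega_k=\Omega_l$,
\[
\frac{d}{dt}\log\|x_k-x_l\|^2=-\langle X,\, x_k+x_l\rangle,
\]
and since this identity has bounded right-hand side it keeps each such distance strictly positive whenever it starts positive, so the cross-ratio stays well-defined along the flow under the standing assumption of distinct initial data.

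Next I would assemble the four contributions. Since $\Omega_{i_1}=\Omega_{i_2}=\Omega_{i_3}=\Omega_{i_4}$, each of the four pairs $(i_1,i_2)$, $(i_3,i_4)$, $(i_2,i_3)$, $(i_4,i_1)$ appearing in $\mathcal{C}_{i_1i_2i_3i_4}$ consists of indices with equal $\Omega$, so the pairwise identity applies to all of them simultaneously. Taking the logarithmic derivative of the cross-ratio and substituting gives
\[
\frac{d}{dt}\log\mathcal{C}_{i_1i_2i_3i_4}=-\big\langle X,\,(x_{i_1}+x_{i_2})+(x_{i_3}+x_{i_4})-(x_{i_2}+x_{i_3})-(x_{i_4}+x_{i_1})\big\rangle=0,
\]
because every vertex $x_{i_m}$ occurs once with a plus sign (as an endpoint of a numerator factor) and once with a minus sign (as an endpoint of a denominator factor), so the bracketed vector is exactly zero. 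Therefore $\mathcal{C}_{i_1i_2i_3i_4}$ is constant in time.

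The argument is routine once the skew cancellation is isolated; the main obstacle is merely recognizing that the hypothesis $\Omega_{i_1}=\Omega_{i_2}=\Omega_{i_3}=\Omega_{i_4}$ is exactly what makes all four pairwise identities valid at once. A single pair failing to share its $\Omega$ would leave a residual term $-\langle\Omega_k x_k,\,x_l\rangle-\langle\Omega_l x_l,\,x_k\rangle$ that does not cancel in the telescoping sum, so no weaker condition on the frequencies would suffice.
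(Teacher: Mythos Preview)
Your proof is correct. The paper does not actually write out a proof of this proposition; it simply says ``from straightforward calculations,'' and your logarithmic-derivative computation is precisely such a calculation, carried out cleanly and with the key cancellation (the skew-symmetric term vanishing when $\Omega_k=\Omega_l$) correctly isolated.

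One remark on an alternative, slightly shorter route that the paper's organization also suggests: once you observe that the four particles $x_{i_1},x_{i_2},x_{i_3},x_{i_4}$ all carry the \emph{same} matrix $\Omega$, those four trajectories are themselves a solution of the identical system \eqref{B-(3)} with that common $\Omega$ and the same forcing $X(t)$, so Theorem~\ref{Thm2.1} applies verbatim and gives the invariance of $\mathcal{C}_{i_1i_2i_3i_4}$ without redoing any differentiation. Your approach has the advantage of being self-contained and making transparent exactly where the hypothesis is used (and why it cannot be weakened), while the reduction to Theorem~\ref{Thm2.1} has the advantage of avoiding any new computation. Both are valid readings of the paper's ``straightforward calculations.''
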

If $\Omega_{i_a}\neq\Omega_{i_b}$ for some $a, b\in\{1, 2, 3, 4\}$, then $\mathcal{C}_{i_1i_2i_3i_4}$ can not be a constant of motion. So we need to define a subset of an index set $C_{2k}$ as follows:
\[
C_{k}^\Omega=\{(i_1, i_2, \cdots, i_{2k})\in C_{2k}: \Omega_{i_1}=\Omega_{i_2}=\cdots=\Omega_{i_{2k}}=\Omega\}.
\]
Then Proposition \ref{Prop6.1} implies that, if $(i_1, i_2, i_3, i_4)\in \bigcup_{\Omega\in \mathrm{Skew}_{d+1}(\bbr)}C_{2}^\Omega$, then $\mathcal{C}_{i_1i_2i_3i_4}$ is a constant of motion. From a similar argument we used in Corollary \ref{Cor2.1}, we have the following corollary.
\begin{corollary}\label{Coro6.1}
Let $\mathcal{X}=\{x_j\}$ be a solution of system \eqref{F-(3)}. For any $\Omega\in\mathrm{Skew}_{d+1}(\bbr)$ and $\Lambda=(i_1, i_2, \cdots, i_{2k})\in \bigcup_{\Omega\in \mathrm{Skew}_{d+1}(\bbr)}C_{k}^\Omega$,  $\mathcal{C}_\Lambda$ is a constant of motion. i.e.
\[
\frac{d}{dt}\mathcal{C}_\Lambda(\mathcal{X}(t))=0.
\]
\end{corollary}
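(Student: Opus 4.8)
The plan is to replicate, essentially verbatim, the argument of Corollary \ref{Cor2.1}, with Proposition \ref{Prop6.1} playing the role that Theorem \ref{Thm2.1} played in the identical setting. The only genuinely new ingredient is the bookkeeping observation that membership of $\Lambda$ in $\bigcup_{\Omega\in\mathrm{Skew}_{d+1}(\bbr)}C_k^\Omega$ forces every index occurring in $\Lambda$ to carry one and the same matrix $\Omega$, which is precisely the hypothesis required so that each four-point factor below falls under Proposition \ref{Prop6.1}.

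First I would recall the purely algebraic telescoping decomposition exploited in Corollary \ref{Cor2.1}. Writing $i_{2k+1}=i_1$ and $i_{2k+2}=i_2$ and inserting the index $i_1$ into each factor, one rewrites
\begin{align*}
\mathcal{C}_\Lambda(\mathcal{X})=\prod_{\ell=1}^k\frac{\|x_{i_{2\ell-1}}-x_{i_{2\ell}}\|^2}{\|x_{i_{2\ell}}-x_{i_{2\ell+1}}\|^2}=\prod_{\ell=1}^k\frac{\|x_{i_1}-x_{i_{2\ell}}\|^2\cdot\|x_{i_{2\ell+1}}-x_{i_{2\ell+2}}\|^2}{\|x_{i_{2\ell}}-x_{i_{2\ell+1}}\|^2\cdot\|x_{i_{2\ell+2}}-x_{i_1}\|^2}=\prod_{\ell=1}^k\mathcal{C}_{i_1i_{2\ell}i_{2\ell+1}i_{2\ell+2}},
\end{align*}
where the inserted factors $\|x_{i_1}-x_{i_{2\ell}}\|^2$ and $\|x_{i_{2\ell+2}}-x_{i_1}\|^2$ cancel telescopically over $\ell=1,\dots,k$. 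Thus $\mathcal{C}_\Lambda$ is a finite product of four-point cross-ratios, each of whose four indices $i_1,i_{2\ell},i_{2\ell+1},i_{2\ell+2}$ is drawn from the index set of $\Lambda$.

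Second, I would invoke the defining property $\Lambda\in C_k^\Omega$, namely $\Omega_{i_1}=\Omega_{i_2}=\cdots=\Omega_{i_{2k}}=\Omega$. In particular every four-point tuple $(i_1,i_{2\ell},i_{2\ell+1},i_{2\ell+2})$ has all four of its constituent indices sharing the common matrix $\Omega$, so each factor $\mathcal{C}_{i_1i_{2\ell}i_{2\ell+1}i_{2\ell+2}}$ satisfies the hypothesis of Proposition \ref{Prop6.1} and is therefore a constant of motion for system \eqref{F-(3)}. Since $\mathcal{C}_\Lambda$ is a product of finitely many constants of motion, the product rule gives $\frac{d}{dt}\mathcal{C}_\Lambda(\mathcal{X}(t))=0$, which is the assertion.

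I do not expect any serious obstacle: the telescoping identity is elementary and already recorded in Corollary \ref{Cor2.1}, and the common-frequency hypothesis is tailor-made so that Proposition \ref{Prop6.1} applies to every factor. The only point needing minor care is well-definedness, i.e.\ that each inserted four-point tuple is an admissible element of $C_2$ (consecutive indices distinct and the wrap-around legitimate) and that the denominators never vanish; the latter follows because the relevant pairs all carry the same $\Omega$, so the collisionless computation of Lemma \ref{Lem2.1} applies unchanged to them and keeps the particles distinct for all $t\ge0$.
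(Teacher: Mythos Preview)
Your proposal is correct and matches the paper's approach exactly: the paper does not even spell out a proof of Corollary \ref{Coro6.1}, stating only that it follows ``from a similar argument we used in Corollary \ref{Cor2.1},'' and you have faithfully reproduced that argument with Proposition \ref{Prop6.1} replacing Theorem \ref{Thm2.1}. Your additional remarks on well-definedness of the inserted four-point factors and on collision avoidance via Lemma \ref{Lem2.1} are a welcome clarification beyond what the paper provides.
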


Now, we provide a kinetic version of Proposition \ref{Prop6.1} and Corollary \ref{Coro6.1}. To obtain candidates for constants of motion, we also use a formal derivation. Since $\mathcal{C}_\Lambda$ is a constant of motion for all $\Lambda\in C_k^\Omega$ and $\Omega\in\mathrm{Skew}_{d+1}(\bbr)$, we know that $\mathcal{C}_\Lambda^p$ is also a constant of motion for all $p\in\bbr$. Therefore, the average of $\mathcal{C}_\Lambda^p$ for all $\Lambda\in C_k^\Omega$:
\[
\frac{1}{|C_k^\Omega|}\sum_{\Lambda\in C_k^\Omega}\mathcal{C}_\Lambda^p
\]
is also a constant of motion. 

We can normalize a probability density function $f_t(\cdot, \Omega)$ for fixed $\Omega\in\mathrm{Skew}_{d+1}(\bbr)$ as follows:
\[
\rho_t^\Omega(x):=\frac{f_t(x, \Omega)}{\int_{\bbs^d}f_t(y, \Omega)d\sigma_y},
\]
if $\int_{\bbs^d}f_t(y, \Omega)d\sigma_y\neq0$. Furthermore, we can easily check that $\int_{\bbs^d}f_t(y, \Omega)d\sigma_y$ is a constant along the time-evolution. Then system \eqref{F-(4)} can be converted into
\begin{align}\label{F-5}
\begin{cases}
\partial_t\rho_t^\Omega+\nabla\cdot(u_t \rho_t^\Omega)=0,\\
u_t(x, \Omega)=\Omega x+X(t)-\langle X(t), x\rangle x,
\end{cases}
\end{align}
for fixed $\Omega\in\mathrm{Skew}_{d+1}(\bbr)$. From formal calculations, we have
\begin{align*}
&\frac{1}{|C_k^\Omega|}\sum_{\Lambda\in C_k^\Omega}\mathcal{C}_\Lambda^p\simeq \int_{(\bbs^d)^{2k}}\mathcal{C}_k(x_1, x_2, \cdots, x_{2k})\rho_t^\Omega(x_1)\rho_t^\Omega(x_2)\cdots \rho_t^\Omega(x_{2k})d\sigma_1 d\sigma_2\cdots d\sigma_{2k}\\
&=\left(\int_{\bbs^d}f_t(y, \Omega)d\sigma_y\right)^{-2k}\int_{(\bbs^d)^{2k}}\mathcal{C}_k(x_1, x_2, \cdots, x_{2k})f_t(x_1, \Omega)f_t(x_2, \Omega)\cdots f_t(x_{2k}, \Omega)d\sigma_1 d\sigma_2\cdots d\sigma_{2k},
\end{align*}
and we can expect that the R.H.S. of the above relation is constant of motion. Similar to Definition \ref{Def3.1}, we define the follows.
\begin{definition}\label{Def6.1}
We define functionals $\mathcal{H}_p^\Omega$ and $\mathcal{H}_{p, k}^\Omega$ for all $p\in\bbr$ and an integer $k\geq2$ as follows:
\[
\mathcal{H}_p^\Omega[f]:=\int_{(\bbs^d)^4}\mathcal{C}(x_1, x_2, x_3, x_4)^pf(x_1,  \Omega)f(x_2, \Omega)f(x_3, \Omega)f(x_4, \Omega)d\sigma_1d\sigma_2d\sigma_3d\sigma_4,
\]
and
\[
\mathcal{H}_{p, k}^\Omega[f]:=\int_{(\bbs^d)^{2k}}\mathcal{C}_k(x_1, x_2, \cdots, x_{2k})^p f(x_1, \Omega)f(x_2, \Omega)\cdots f(x_{2k}, \Omega)d\sigma_1d\sigma_2\cdots d\sigma_{2k}.
\]
\end{definition}

From Theorem \ref{Thm3.2}, we have the following corollary.
\begin{corollary}
Let $f_t$ be a solution of system \eqref{F-(4)}. If $p\in\left(-\frac{d}{2},\frac{d}{2}\right)$, then $\mathcal{H}_p^\Omega[f_t]$ and $ \mathcal{H}_{p, k}^\Omega[f_t]$ exist and they are constant of motion functionals for all integer $k\geq2$.
\end{corollary}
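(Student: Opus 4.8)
The plan is to reduce the statement to the identical kinetic theory of Section \ref{sec:3} by slicing the distribution $f_t$ along fixed skew-symmetric matrices $\Omega$. The crucial observation is that in the transport equation \eqref{F-(4)} the velocity $u_t(x,\Omega)=\Omega x+X(t)-\langle X(t),x\rangle x$ keeps $\Omega$ frozen: it only moves the spatial variable $x$ within $\bbs^d$, and the divergence acts on $x$ alone. Hence, for each fixed $\Omega\in\mathrm{Skew}_{d+1}(\bbr)$, the fiber $f_t(\cdot,\Omega)$ obeys a continuity equation on $\bbs^d$ driven by $u_t(\cdot,\Omega)$, and the fiber mass $m(\Omega):=\int_{\bbs^d}f_t(x,\Omega)\,d\sigma_x$ is conserved in time, as follows by integrating the continuity equation over the closed manifold $\bbs^d$.

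First I would dispose of the trivial case $m(\Omega)=0$, in which both functionals vanish identically and are therefore constant. For $m(\Omega)>0$, I would introduce the normalized density $\rho_t^\Omega:=f_t(\cdot,\Omega)/m(\Omega)\in\mathcal{P}(\bbs^d)$ and note that, by the previous paragraph, $\rho_t^\Omega$ solves system \eqref{F-5}, which is exactly system \eqref{C-(1)} with skew-symmetric matrix $\Omega$ and driving term $X(t)$. The point to stress is that the coupling of the different $\Omega$-fibers through $X(t)$ is immaterial here: Theorem \ref{Thm3.2} applies to \emph{any} solution of \eqref{C-(1)} regardless of how $X(t)$ is generated, so $X$ may be treated as a prescribed time-dependent vector.

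Next I would pull the conserved constant $m(\Omega)$ out of the functionals. Since $f_t(x_i,\Omega)=m(\Omega)\rho_t^\Omega(x_i)$ and $m(\Omega)$ is time-independent, the definitions in Definition \ref{Def6.1} give
\[
\mathcal{H}_p^\Omega[f_t]=m(\Omega)^4\,\mathcal{H}_p[\rho_t^\Omega],\qquad \mathcal{H}_{p,k}^\Omega[f_t]=m(\Omega)^{2k}\,\mathcal{H}_{p,k}[\rho_t^\Omega].
\]
For existence, since $\rho_t^\Omega\in\mathcal{P}(\bbs^d)$ and $p\in\left(-\frac{d}{2},\frac{d}{2}\right)$, Remark \ref{Rem4.2} and the final theorem of Section \ref{sec:4} guarantee $\mathcal{H}_p[\rho_t^\Omega]<\infty$ and $\mathcal{H}_{p,k}[\rho_t^\Omega]<\infty$, while $m(\Omega)<\infty$ is automatic as the integral of a continuous density over the compact sphere; hence $\mathcal{H}_p^\Omega[f_t]$ and $\mathcal{H}_{p,k}^\Omega[f_t]$ exist. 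For the conservation, Theorem \ref{Thm3.2} gives that $\mathcal{H}_p[\rho_t^\Omega]$ and $\mathcal{H}_{p,k}[\rho_t^\Omega]$ are constants of motion, and multiplying by the time-independent factors $m(\Omega)^4$ and $m(\Omega)^{2k}$ preserves this property.

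The only genuinely delicate point is the reduction carried out in the first two paragraphs, namely verifying that the normalized fiber $\rho_t^\Omega$ really solves an identical-type system \eqref{C-(1)} and that $m(\Omega)$ is conserved; once this fiberwise decoupling is established, the remainder is an immediate transcription of Theorem \ref{Thm3.2} together with the existence results of Section \ref{sec:4}.
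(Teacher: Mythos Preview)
Your proposal is correct and follows essentially the same approach as the paper: normalize the fiber $f_t(\cdot,\Omega)$ to a probability density $\rho_t^\Omega\in\mathcal{P}(\bbs^d)$, observe that it solves the identical system \eqref{C-(1)}, invoke Theorem \ref{Thm3.2} and the existence results of Section \ref{sec:4}, and then use the factorization $\mathcal{H}_p^\Omega[f_t]=m(\Omega)^4\mathcal{H}_p[\rho_t^\Omega]$ (resp.\ $m(\Omega)^{2k}$) together with the time-independence of $m(\Omega)$. Your version is in fact slightly more complete than the paper's, since you explicitly treat the degenerate case $m(\Omega)=0$ and articulate why $X(t)$ may be regarded as a prescribed input when applying Theorem \ref{Thm3.2}.
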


\begin{proof}
Suppose that $\int_{\bbs^d}f_t(y, \Omega)d\sigma_y\neq0$, and let $\rho_t^\Omega$ be a solution of \eqref{F-5}. From Theorem \ref{Thm3.2}, we know that $\mathcal{H}_p[\rho_t^\Omega]$ and $\mathcal{H}_{p, k}[\rho_t^\Omega]$ are constants of motion, if they exist. Furthermore, we know that the functionals exist if and only if $p\in\left(-\frac{d}{2},\frac{d}{2}\right)$. We use the simple fact that $\int_{\bbs^d} f_t(y, \Omega)d\sigma_y$ is a constant along time evolution. So we define a constant $\beta_\Omega$ as follows:
\[
\beta_\Omega:=\int_{\bbs^d}f_t(y, \Omega)d\sigma_y.
\]
From identities
\begin{align*}
\mathcal{H}_p^\Omega[f_t]=\beta^4_\Omega\mathcal{H}_p[\rho_t^\Omega]\quad\text{and}\quad \mathcal{H}_{p, k}^\Omega[f_t]=\beta^{2k}_\Omega\mathcal{H}_{p, k}[\rho_t^\Omega],
\end{align*}
we know that $\mathcal{H}_p^\Omega[f_t]$ and $ \mathcal{H}_{p, k}^\Omega[f_t]$ are also constants of motion for system \eqref{F-(4)}.
\end{proof}

By Theorem 3.1, if we define $M_t^\Omega$ as follows:
\begin{align*}
M_t^\Omega(x)=w^\Omega(t)+\frac{(R^\Omega(t)x+w^\Omega(t))(1-\|w^\Omega(t)\|^2)}{\|R^\Omega(t)x+w^\Omega(t)\|^2},
\end{align*}
where $w^\Omega$ and $R^\Omega$ are solution of the following system:
\begin{align*}
\begin{cases}
\displaystyle \dot{w}^\Omega=\Omega w^\Omega+\frac{1}{2}(1+\|w^\Omega\|^2)X-\langle w^\Omega, X\rangle w^\Omega,\quad w^\Omega(0)=0,\\
\dot{R}^\Omega=\Omega+X^\Omega(w^\Omega)^\top R^\Omega-w^\Omega X^\top R^\Omega,\quad 
R^\Omega(0)=I_{d+1},
\end{cases}
\end{align*}
then we have
\[
\rho_t^\Omega=M_t^\Omega\#\rho_t^\Omega\quad\Longleftrightarrow\quad f_t(\cdot, \Omega)=M_t^\Omega\#f_0(\cdot,\Omega).
\]
This yields, if we can define a map $N_t:\Xi\to\Xi$ as follows:
\begin{align}\label{F-7}
N_t(x, \Omega)=(M_t^\Omega(x), \Omega),
\end{align}
then we have
\[
f_t=N_t\#f_0.
\]
Finally, we can obtain the WS-transform of non-identical vector models, and we can summarize the above results as following theorem.
\begin{theorem}\label{Thm6.1}
Let $f_t$ be a solution of system \eqref{F-(4)} with the initial data $f_0$. Then $f_t=N_t\#f_0$, where $N_t$ is defined in \eqref{F-7}.
\end{theorem}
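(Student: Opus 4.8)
The plan is to show that $N_t$ is precisely the characteristic flow on $\Xi$ of the velocity field $u_t$ appearing in \eqref{F-(4)}, and then to invoke the same transport principle used in the proof of Theorem \ref{Thm3.1}: the push-forward of the initial datum along the characteristic flow solves the associated continuity equation. The structural observation that makes everything work is that $u_t(x,\Omega)=\Omega x+X(t)-\langle X(t),x\rangle x$ has no component in the $\mathrm{Skew}_{d+1}(\bbr)$-direction; hence $u_t$ is tangent to each slice $\bbs^d\times\{\Omega\}$, the natural frequency $\Omega$ is a frozen parameter along trajectories, and a single external forcing $X(t)$ drives every slice at once.

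First I would verify that $N_t$ is the flow of $u_t$. From $w^\Omega(0)=0$ and $R^\Omega(0)=I_{d+1}$ one gets $M_0^\Omega=\mathrm{id}_{\bbs^d}$ exactly as for $M_0$ in Section \ref{sec:3.1}, so $N_0=\mathrm{id}_\Xi$. Differentiating $N_t(x,\Omega)=(M_t^\Omega(x),\Omega)$ in $t$, the $\mathrm{Skew}_{d+1}(\bbr)$-component is constant and therefore matches the vanishing $\Omega$-part of $u_t$, while the $\bbs^d$-component satisfies $\frac{d}{dt}M_t^\Omega(x)=\Omega M_t^\Omega(x)+X(t)-\langle M_t^\Omega(x),X(t)\rangle M_t^\Omega(x)$, which is the fixed-$\Omega$ instance of the defining property of $M_t$ constructed in \eqref{C-3-7}. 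This right-hand side is exactly the $\bbs^d$-part of $u_t(N_t(x,\Omega))$, so $\frac{d}{dt}N_t(z)=u_t(N_t(z))$ with $N_0=\mathrm{id}_\Xi$.

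Given this, I would conclude as in Theorem \ref{Thm3.1}: because $N_t$ is the characteristic flow of $u_t$, the push-forward $f_t:=N_t\#f_0$ satisfies $\partial_t f_t+\nabla\cdot(u_t f_t)=0$ with initial datum $f_0$, the identity $\partial_t(N_t\#f_0)=-\nabla\cdot(f_t u_t)$ being the transport computation already carried out on $\bbs^d$, now performed on $\Xi$. Equivalently, fiber by fiber this reads $f_t(\cdot,\Omega)=M_t^\Omega\#f_0(\cdot,\Omega)$, which is precisely Theorem \ref{Thm3.1} applied to the slice equation \eqref{F-5} with frozen $\Omega$ and common $X(t)$; reassembling the slices over $\Omega$ yields $f_t=N_t\#f_0$.

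The main obstacle is not conceptual but lies in justifying the slice decomposition rigorously: one must confirm that, since $u_t$ has vanishing $\mathrm{Skew}_{d+1}(\bbr)$-component, the divergence $\nabla\cdot(u_t f_t)$ on $\Xi$ differentiates only in the $\bbs^d$-variable, so that the continuity equation on $\Xi$ disassembles into the family \eqref{F-5} of continuity equations on $\bbs^d\times\{\Omega\}$ indexed by $\Omega$. The fact that the same $X(t)$ forces every slice is exactly what lets a single recipe $\Omega\mapsto(w^\Omega,R^\Omega)\mapsto M_t^\Omega$ furnish the flow simultaneously on all fibers; once this decomposition is in place, the statement reduces to Theorem \ref{Thm3.1} applied fiberwise together with the reassembly.
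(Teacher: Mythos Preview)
Your proposal is correct and follows essentially the same approach as the paper: apply Theorem \ref{Thm3.1} fiberwise to the slice equation \eqref{F-5} to obtain $f_t(\cdot,\Omega)=M_t^\Omega\#f_0(\cdot,\Omega)$, then reassemble over $\Omega$ via the definition \eqref{F-7} of $N_t$. Your additional framing of $N_t$ as the characteristic flow of $u_t$ on all of $\Xi$, exploiting the vanishing $\mathrm{Skew}_{d+1}(\bbr)$-component of $u_t$, is a helpful clarification but not a genuinely different route.
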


\section{Conclusion}\label{sec:7}
\setcounter{equation}{0}

Throughout this paper, we have studied the Watanabe-Strogatz transformation on kinetic sphere models and its application. The Watanabe-Strogatz transformation \cite{Wa-St-1, Wa-St-2} was originally defined on the Kuramoto model. Lohe generalized this result to sphere models \cite{Lo}. In this paper, we study a kinetic version of the Watanabe-Strogatz transformation on sphere models. From this transformation, we also provided constant of motion functionals $\mathcal{H}_p$, $\mathcal{H}_{p, k}$ and necessary and sufficient conditions for well-definedness of functionals. As an application of constant of motion functionals, we proved the instability of the bipolar state of the kinetic swarm sphere model. Since the structure of the model, we only applied the constant of motion functional to the kinetic swarm sphere model. Applying constant of motion functionals to other vector models can be nice future work. We also provide the WS-transform on non-identical sphere models. We did not consider the matrix model in this paper. However, from the results in \cite{Lo-1}, we can expect that the WS-transform can be applied to kinetic matrix Riccati systems, and we leave this topic for future work.



\end{document}